\newfont{\gothic}{eufm10}
   \def\C{{\CC}} \def\CC{{\mathbb{C}}}
\def\Z{{\mathbb{Z}}}                   \def\R{{\RR}} 
\def\RR{{\mathbb{R}}}                \def\H{{\HH}}
\def\HH{{\mathbb{H}}}
        \newtheorem{theorem}{Theorem}[section]
\newtheorem{lemma}[theorem]{Lemma}
\newtheorem{proposition}[theorem]{Proposition}
\newtheorem{corollary}[theorem]{Corollary}
\newtheorem{definition1}[theorem]{Definition}
\newenvironment{definition}{\begin{definition1}\rm}{\hfill $\triangle$\end{definition1}}
\newenvironment{proof}{\addvspace\baselineskip\noindent{\it
Proof:}\quad}{\hspace*{\fill}         $\Box$\par\addvspace\baselineskip}
\newenvironment{proofof}[1]{\addvspace\baselineskip\noindent{\it Proof}\quad}{\hspace*{\fill} $\Box$\par\addvspace\baselineskip}
\newtheorem{remark1}[theorem]{Remark}
\newenvironment{remark}{\begin{remark1}\rm}{\hfill $\triangle$\end{remark1}}
\newtheorem{example1}[theorem]{Example}
\newenvironment{example}{\begin{example1}\rm}{\hfill $\triangle$\end{example1}}
\def\barray{\begin{eqnarray*}}             \def\earray{\end{eqnarray*}}
\def\beq{\begin{equation}} \def\eeq{\end{equation}}
\makeatletter \title{Coupled cell networks and their hidden symmetries}
\author{Bob Rink\thanks{Department of Mathematics, VU University Amsterdam, The Netherlands, {\tt b.w.rink@vu.nl}.} \ and Jan Sanders\thanks{Department of Mathematics, VU University Amsterdam, The Netherlands, {\tt jan.sanders.a@gmail.com}.}
}
\begin{document}  \hyphenation{boun-da-ry mo-no-dro-my sin-gu-la-ri-ty ma-ni-fold ma-ni-folds re-fe-rence se-cond se-ve-ral dia-go-na-lised con-ti-nuous thres-hold re-sul-ting fi-nite-di-men-sio-nal ap-proxi-ma-tion pro-per-ties ri-go-rous mo-dels mo-no-to-ni-ci-ty pe-ri-o-di-ci-ties mi-ni-mi-zer mi-ni-mi-zers know-ledge ap-proxi-mate pro-per-ty poin-ting ge-ne-ra-li-za-tion ge-ne-ral re-pre-sen-ta-tions equi-variance equi-variant Equi-variance Choo-sing to-po-lo-gy brea-king}
 
\newcommand{\X}{\mathbb{X}}

\newcommand{\p}{\partial}
\maketitle
\noindent 
\abstract{\noindent Dynamical systems with a coupled cell network structure can display synchronous solutions, spectral degeneracies and anomalous bifurcation behavior. We explain these phenomena here for homogeneous networks, by showing that every homogeneous network dynamical system admits a semigroup of hidden symmetries. The synchronous solutions lie in the symmetry spaces of this semigroup and the spectral degeneracies of the network are determined by its indecomposable representations. Under a condition on the semigroup representation, we prove that a one-parameter synchrony breaking steady state bifurcation in a coupled cell network must generically occur along an absolutely indecomposable subrepresentation. We conclude with a classification of generic one-parameter bifurcations in monoid networks with two or three cells.}                 

\section{Introduction}
Coupled cell networks arise abundantly in the sciences. They vary from discrete particle models, electrical circuits and Josephson junction arrays to the world wide web, power grids, food webs and neuronal networks. 
Throughout the last decade, an extensive mathematical theory has been developed for the study of dynamical systems with a network structure \cite{field}, \cite{curious}, \cite{golstew}, \cite{stewartnature}, \cite{pivato}. 
In these network dynamical systems, the evolution of a constituent or ``cell'' is determined by the states of certain particular other cells. 

In this paper, we shall study the dynamics of homogeneous coupled cell networks. This dynamics is determined by a system of ordinary differential equations of the form 
 \begin{align}\label{diffeqnintro}
 \dot x_i =  f(x_{\sigma_1(i)}, \ldots, x_{\sigma_n(i)}) \ \mbox{for} \ 1\leq i\leq N.
\end{align}
In these equations of motion, the evolution of the state variable $x_i$ is only determined by the values of $x_{\sigma_1(i)}, \ldots, x_{\sigma_n(i)}$.  
The functions
$$\sigma_1, \ldots, \sigma_n: \{1, \ldots, N\} \to \{1, \ldots, N\}$$
should therefore be thought of as the network that decides which cells influence which cells.

A network structure can have a nontrivial impact on the behavior of a dynamical system. For instance, the network architecture of (\ref{diffeqnintro}) may force it to admit synchronous or partially synchronous solutions, cf. \cite{antonelli2},  \cite{romano},  \cite{golstew3}, \cite{torok}, \cite{stewart1}, \cite{pivato}, \cite{wang}. It has also been observed that the network structure of a dynamical system can influence its bifurcations. In fact, bifurcation scenarios that are unheard of in dynamical systems without any special structure, can occur generically in certain networks \cite{bifurcations}, \cite{anto4}, \cite{dias}, \cite{elmhirst}, \cite{krupa}, \cite{pivato2}, \cite{claire2}, \cite{synbreak}. In particular, its network structure can force the linearization of (\ref{diffeqnintro}) at a (partially) synchronous equilibrium to have eigenvalues with high multiplicity \cite{synbreak2}, \cite{leite}, \cite{feedforwardRinkSanders}. This in turn influences the solutions and bifurcations that can occur near such an equilibrium.
 
Attempts to understand this degenerate behaviour of networks have invoked the {\it groupoid formalism} of Golubitsky and Stewart et al. \cite{curious}, \cite{golstew}, \cite{stewartnature}, \cite{pivato} and more recently also the language of category theory \cite{deville}. In this paper, we propose another explanation though, inspired by the remark that invariant subspaces and spectral degeneracies are often found in dynamical systems with symmetry, cf. \cite{field4}, \cite{perspective}, \cite{golschaef2}. 

It is natural to ask whether symmetries explain the dynamical degeneracies of coupled cell networks and it has been conjectured that in general they do not \cite{golstew}. We nevertheless show in this paper that every homogeneous coupled cell network has hidden symmetries. More precisely, it turns out that equations (\ref{diffeqnintro}) are conjugate to another coupled cell network that admits a semigroup of symmetries. We call this latter network the {\it fundamental network} of (\ref{diffeqnintro}) and it is given by equations of the form
\begin{align}\label{fundamentalintro}
\dot X_j = f(X_{\widetilde \sigma_1(j)}, \ldots, X_{\widetilde \sigma_{n'}(j)})\ \mbox{for} \ 1\leq j \leq n'.
\end{align}
We will show how to compute the symmetries of (\ref{fundamentalintro}) from the network maps $\sigma_1, \ldots, \sigma_n$ and note that some of these symmetries may be represented by noninvertible transformations. 
 
It will moreover be shown that if the semigroup of symmetries is a monoid (i.e. if it contains a unit), then the fundamental network (\ref{fundamentalintro}) is completely characterized by its symmetries. This means that all the degenerate phenomena that occur in the fundamental network are due to symmetry, including the existence of synchronous solutions and the occurrence of unfamiliar bifurcations. The characterization of the fundamental network as an equivariant dynamical system is of great practical interest, because it allows us to understand the structure of equations (\ref{diffeqnintro}) and (\ref{fundamentalintro}) much better. For example, with the help of representation theory we are able to classify the generic one-parameter synchrony breaking steady state bifurcations that can be found in fundamental networks with two or three cells. We also explain how our knowledge of the fundamental network helps us understanding the behavior of the original network (\ref{diffeqnintro}). 

The remainder of this paper is organized as follows. In Section \ref{secsemigroup} we introduce the semigroup associated to the equations of motion (\ref{diffeqnintro}) and we recall some results from \cite{CCN} on semigroup coupled cell networks. In Section \ref{sechidden} we relate equations (\ref{diffeqnintro}) to the fundamental network (\ref{fundamentalintro}) and we show that the latter is equivariant under the action of the semigroup introduced in Section \ref{secsemigroup}. In Section \ref{secrepresentations} we present and prove some well-known facts from the representation theory of semigroups. We apply this theory in Sections \ref{seclyapunovschmidt} and \ref{secgeneric}, where we build a framework for the bifurcation theory of fundamental networks. More precisely, in Section \ref{seclyapunovschmidt} we introduce a variant of the method of Lyapunov-Schmidt reduction to investigate steady state bifurcations in differential equations with a semigroup of symmetries. In Section \ref{secgeneric} we then prove (under a certain condition on the semigroup) that a generic synchrony breaking steady state bifurcation in a one-parameter family of semigroup symmetric differential equations takes place along an absolutely indecomposable representation of the semigroup. We apply this result in Section \ref{sectwoorthree} to classify the generic co-dimension one synchrony breaking steady state bifurcations that can occur in monoid networks with two or three cells.

 \subsubsection*{Acknowledgement}
 The authors would like to thank Andr\'e Vanderbauwhede for very useful discussions, suggestions and explanations.

\section{Semigroup networks}\label{secsemigroup}
In this section we make some basic definitions and summarize some results from \cite{CCN}. Dynamical systems with a coupled cell network structure can be determined in various ways \cite{field}, \cite{golstew}, \cite{torok}, \cite{pivato}, but in this paper we describe it by means of a collection of distinct maps
$$\Sigma=\{\sigma_1, \ldots, \sigma_n\}\ \mbox{with} \ \sigma_1,\ldots, \sigma_n: \{1, \ldots, N\}\to\{1,\ldots, N\}\, .$$
The collection $\Sigma$ has the interpretation of a network with $1\leq N < \infty$ cells. These cells can be thought of as the vertices of a directed multigraph in which vertex $1\leq i\leq N$ receives inputs from respectively the vertices $\sigma_1(i), \ldots, \sigma_n(i)$. The idea is that the state of cell $1\leq i\leq N$ is determined by a variable $x_i$ that takes values in a vector space $V$ and that the evolution of cell $i$ is determined only by the states of the cells that act as its inputs. With this in mind, we make the following definition:
\begin{definition}\label{networkdefinition}
Let $\Sigma=\{\sigma_1, \ldots, \sigma_n\}$ be a collection of $n$ distinct maps on $N$ elements, $V$ a finite dimensional real vector space and $f: V^n\to V$ a smooth function. Then we define
\begin{align}\label{networkvectorfield}
\gamma_f:V^N\to V^N \ \mbox{by}\ (\gamma_f)_i(x):=f(x_{\sigma_1(i)}, \ldots, x_{\sigma_n(i)})\  \mbox{for}\ 1 \leq i \leq N\, . 
\end{align}
Depending on the context, we will say that $\gamma_f$ is a {\it homogeneous coupled cell network map} or a {\it homogeneous coupled cell network vector field} subject to $\Sigma$.
\end{definition}
Indeed, the coupled cell network vector field $\gamma_f$ defines a dynamical system in which the evolution of the state of cell $i$ is determined by the states of cells $\sigma_1(i), \ldots, \sigma_n(i)$, namely
$$\dot x(t) = \gamma_f(x(t))\, .$$ 
One can also view $\gamma_f$ as a map rather than a vector field and study the discrete dynamics $$x^{(n+1)}=\gamma_f(x^{(n)})\, .$$ 
\begin{example}\label{running}
Our running example of a network dynamical system will consist of $N=3$ cells and $n=3$ inputs per cell. In fact, let us choose 
$$\sigma_1[123]:=[123], \sigma_2[123]:=[121]\ \mbox{and} \ \sigma_3[123]:=[111]\, .$$ 
Then the coupled cell network maps subject to $\Sigma:=\{\sigma_1, \sigma_2, \sigma_3\}$ are of the form 
$$\gamma_f(x_1, x_2, x_3) = \left(f(x_1, x_1, x_1), f(x_2, x_2, x_1), f(x_3, x_1, x_1) \right)\, . $$
The corresponding network differential equations are
$$\begin{array}{ll} \dot x_1 =&  f(x_1, x_1, x_1) \\ \dot x_2 =&  f(x_2, x_2, x_1) \\ \dot x_3 =&  f(x_3, x_1, x_1)\end{array} \, .$$
A graphical representation of the networks maps $\sigma_1, \sigma_2, \sigma_3$ is given in Figure \ref{pict1}.
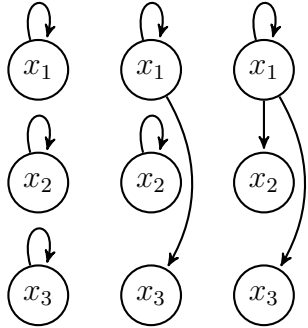
\begin{figure}[ht]\renewcommand{\figurename}{\rm \bf \footnotesize Figure}
\centering
\begin{tikzpicture}[->,>=stealth',shorten >=1pt,auto,node distance=1.5cm,
                    thick,main node/.style={circle,draw,font=\sffamily\large\bfseries}]

  \node[main node] (1) {$x_1$};
  \node[main node] (2) [below of=1] {$x_2$};
  \node[main node] (3) [below of=2] {$x_3$};

   \node[main node] (4) [right of=1] {$x_1$};
  \node[main node] (5) [below of=4] {$x_2$};
  \node[main node] (6) [below of=5] {$x_3$};
  \node[main node] (7) [right of =4] {$x_1$};
  \node[main node] (8) [below  of=7] {$x_2$};
  \node[main node] (9) [below of=8] {$x_3$};

  \path[every node/.style={font=\sffamily\small}]
(1) edge [loop above] node {} (1)
(2) edge [loop above] node {} (2)
(3) edge [loop above] node {} (3)
    
(4) edge [loop above] node {} (4)
(5) edge [loop above] node {} (5)
(4) edge [bend left] node {} (6)

(7) edge [loop above] node {} (7)
(7) edge node {} (8)
(7) edge [bend left] node {} (9)
    ;

\end{tikzpicture}
\caption{\footnotesize {\rm The collection $\{\sigma_1, \sigma_2, \sigma_3\}$ depicted as a directed multigraph.}}
\label{pict1}
\end{figure}

\end{example}
A technical problem that occurs when studying network dynamical systems is that the composition $\gamma_f\circ \gamma_g$ (or the infinitesimal composition $[\gamma_f, \gamma_g]$, the Lie bracket) of two coupled cell network maps need not be a coupled cell network map with the same network structure. This problem was addressed in \cite{CCN}, where we formulated a condition on a network that guarantees that this problem does not arise. Let us recall this condition here:
\begin{definition}
 We say that $\Sigma=\{\sigma_1, \ldots, \sigma_n\}$ is a {\it semigroup} if all its elements are distinct and if for all $1\leq j_1, j_2\leq n$ there is a $1\leq j_3\leq n$ such that 
$\sigma_{j_1}\circ \sigma_{j_2}= \sigma_{j_3}$.
 \end{definition}
 \begin{example}\label{comptable}
Recall our running Example \ref{running}. The collection $\Sigma=\{\sigma_1, \sigma_2, \sigma_3\}$ forms an abelian semigroup. Indeed, one checks that the composition table of these maps is given by:
$$\begin{array}{c|ccc} \circ & \sigma_1 & \sigma_2 & \sigma_3\\ \hline 
\sigma_1 & \sigma_1 & \sigma_2  & \sigma_3\\
\sigma_2 & \sigma_2 & \sigma_2 & \sigma_3 \\
\sigma_3 & \sigma_3 & \sigma_3 & \sigma_3
\end{array} \, .$$
\end{example}
The relevance of semigroup networks is illustrated by the following theorem. It is one of the main results in \cite{CCN} and we omit the proof here.
\begin{theorem}\label{closed}
When $\Sigma=\{\sigma_1, \ldots, \sigma_n\}$ is a semigroup, then the collection
$$\{\gamma_f\, |\, f:V^n\to V\ \mbox{smooth}\}$$
is closed under taking compositions and Lie brackets. 
\end{theorem}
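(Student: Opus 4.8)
\medskip
\noindent\textbf{Proof strategy.}\quad The plan is to establish both closure properties by a direct computation: unfold Definition~\ref{networkdefinition}, use the semigroup property of $\Sigma$ to rewrite every composed index, and then recognise the result once more as a network vector field $\gamma_h$ for an explicitly constructed smooth $h:V^n\to V$. Throughout, for each pair $1\le j_1,j_2\le n$ fix an index $j_3=j_3(j_1,j_2)$ with $\sigma_{j_1}\circ\sigma_{j_2}=\sigma_{j_3}$; such an index exists exactly because $\Sigma$ is a semigroup, and --- this is the point --- it does \emph{not} depend on the cell $i$. I also note the trivial identity $\gamma_f+\gamma_g=\gamma_{f+g}$, so that $\{\gamma_h\}$ is a real vector space; hence for the Lie bracket it is enough to treat each of the two summands $D\gamma_f\cdot\gamma_g$ and $D\gamma_g\cdot\gamma_f$ on its own.

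For the composition I would compute
\begin{align*}
(\gamma_f\circ\gamma_g)_i(x)
&=f\bigl((\gamma_g(x))_{\sigma_1(i)},\ldots,(\gamma_g(x))_{\sigma_n(i)}\bigr)\\
&=f\bigl(g(x_{\sigma_1\sigma_1(i)},\ldots,x_{\sigma_n\sigma_1(i)}),\ldots,g(x_{\sigma_1\sigma_n(i)},\ldots,x_{\sigma_n\sigma_n(i)})\bigr),
\end{align*}
and then substitute $x_{\sigma_k\sigma_j(i)}=x_{\sigma_{j_3(k,j)}(i)}$. Setting
$$h(y_1,\ldots,y_n):=f\bigl(g(y_{j_3(1,1)},\ldots,y_{j_3(n,1)}),\ldots,g(y_{j_3(1,n)},\ldots,y_{j_3(n,n)})\bigr),$$
which is smooth because it is assembled from $f$ and $g$ by composition with coordinate projections, one reads off $(\gamma_f\circ\gamma_g)_i(x)=h(x_{\sigma_1(i)},\ldots,x_{\sigma_n(i)})=(\gamma_h)_i(x)$, i.e.\ $\gamma_f\circ\gamma_g=\gamma_h$. (This simultaneously covers the discrete dynamics $x^{(n+1)}=\gamma_f(x^{(n)})$.)

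For the Lie bracket I would differentiate using $D(\gamma_f)_i(x)v=\sum_{k=1}^n\partial_kf(x_{\sigma_1(i)},\ldots,x_{\sigma_n(i)})\,v_{\sigma_k(i)}$, which gives
$$(D\gamma_f(x)\cdot\gamma_g(x))_i=\sum_{k=1}^n\partial_kf(x_{\sigma_1(i)},\ldots,x_{\sigma_n(i)})\,(\gamma_g(x))_{\sigma_k(i)}.$$
Substituting $(\gamma_g(x))_{\sigma_k(i)}=g(x_{\sigma_{j_3(1,k)}(i)},\ldots,x_{\sigma_{j_3(n,k)}(i)})$ as before exhibits this component as $\widetilde h(x_{\sigma_1(i)},\ldots,x_{\sigma_n(i)})$ with $\widetilde h(y):=\sum_{k=1}^n\partial_kf(y)\,g(y_{j_3(1,k)},\ldots,y_{j_3(n,k)})$, again smooth; the term $D\gamma_g\cdot\gamma_f$ is identical with $f$ and $g$ interchanged. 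Since $\{\gamma_h\}$ is closed under differences, it follows (with either of the usual sign conventions) that $[\gamma_f,\gamma_g]$ is again of the form $\gamma_h$.

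I do not expect a genuine obstacle: the content is purely combinatorial bookkeeping of composed index maps. The only thing requiring care --- and the sole place the hypothesis is used --- is that each $\sigma_k\circ\sigma_j$ lies again in $\Sigma$, so that $(\gamma_f\circ\gamma_g)_i$ and $[\gamma_f,\gamma_g]_i$ depend on $x$ only through the input coordinates $x_{\sigma_1(i)},\ldots,x_{\sigma_n(i)}$, with one and the \emph{same} function $h$ for every cell $i$. Without the semigroup property the maps $\sigma_k\circ\sigma_j$ would in general be new maps outside $\Sigma$, and $\gamma_f\circ\gamma_g$ would carry the structure of a (possibly larger) network rather than of $\Sigma$ --- precisely the difficulty that this theorem rules out.
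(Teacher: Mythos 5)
Your proof is correct: the key observation that $\sigma_k\circ\sigma_j=\sigma_{j_3(k,j)}$ with $j_3(k,j)$ independent of the cell $i$ is exactly what makes the composed and bracketed maps into network maps for a single response function $h$, and your explicit formulas for $h$ and $\widetilde h$ agree with the worked example the paper gives immediately after the theorem. The paper itself omits the proof (deferring to the reference \cite{CCN}), so there is no in-text argument to compare against, but your direct index-bookkeeping computation is the natural and expected one.
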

\begin{example}
Recall that our running Example \ref{running} is a semigroup network. When 
\begin{align}\nonumber  
\gamma_f(x_1, x_2, x_3) & = \left(f(x_1, x_1, x_1), f(x_2, x_2, x_1), f(x_3, x_1, x_1) \right) \ \mbox{and} \\ \nonumber 
\gamma_g(x_1, x_2, x_3) & = \left(g(x_1, x_1, x_1), g(x_2, x_2, x_1), g(x_3, x_1, x_1) \right) 
\end{align}
are two coupled cell networks subject to $\Sigma$, then one computes that
$$(\gamma_f\circ \gamma_g)(x_1, x_2, x_3) = \left( \begin{array}{c} f(g(x_1, x_1, x_1), g(x_1, x_1, x_1), g(x_1, x_1, x_1)) \\  f(g(x_2, x_2, x_1), g(x_2, x_2, x_1), g(x_1, x_1, x_1))\\ f(g(x_3, x_1, x_1), g(x_1, x_1, x_1), g(x_1, x_1, x_1)) \end{array} \right) \, .$$ 
As anticipated by Theorem \ref{closed}, this shows that $\gamma_f\circ \gamma_g = \gamma_h$ with
$$h(X_1, X_2, X_3) = f(g(X_1, X_2, X_3), g(X_2, X_2, X_3), g(X_3, X_3, X_3))\, .$$
A similar computation shows that  $[\gamma_f, \gamma_g] := D\gamma_f\cdot \gamma_g-D\gamma_g\cdot \gamma_f= \gamma_{i}$ with
\begin{align} i(X_1, X_2, X_3) & = \nonumber D_1f(X_1, X_2, X_3)\cdot g(X_1, X_2, X_3) - D_1g(X_1, X_2, X_3)\cdot f(X_1, X_2, X_3) \\ \nonumber &
 + D_2f(X_1, X_2, X_3)\cdot g(X_2, X_2, X_3) - D_2g(X_1, X_2, X_3)\cdot f(X_2, X_2, X_3) \\ \nonumber &
 + D_3f(X_1, X_2, X_3)\cdot g(X_3, X_3, X_3) - D_3g(X_1, X_2, X_3)\cdot f(X_3, X_3, X_3)\, . 
\end{align}
\end{example}
Theorem \ref{closed} means that the class of semigroup network dynamical systems is a natural one to work with. 
It was shown for example in \cite{CCN} that near a dynamical equilibrium, the local normal form of a semigroup network vector field is a network vector field with the very same semigroup network structure. From the point of view of local dynamics and bifurcation theory, semigroup networks are thus very useful.

An arbitrary collection $\Sigma=\{\sigma_1,\ldots,\sigma_n\}$ need of course not be a semigroup, but it does generate a unique smallest semigroup
$$\Sigma'=\{\sigma_1, \ldots, \sigma_n, \sigma_{n+1},\ldots, \sigma_{n'}\} \ \mbox{that contains}\ \Sigma\, .$$
In fact, every coupled cell network map $\gamma_f$ subject to $\Sigma$ is also a coupled cell network map subject to the semigroup $\Sigma'$. Indeed, if we define 
$$f'(X_1, \ldots, X_n, X_{n+1}, \ldots, X_{n'}) := f(X_1, \ldots, X_n)$$ 
then it obviously holds that 
$$(\gamma_{f'})_i(x) = f'(x_{\sigma_1(i)}, \ldots, x_{\sigma_n(i)}, x_{\sigma_{n+1}(i)}, \ldots, x_{\sigma_{n'}(i)}) = f(x_{\sigma_1(i)}, \ldots, x_{\sigma_n(i)}) = (\gamma_{f})_i(x)\ .$$
For this reason, we will throughout this paper always augment $\Sigma$ to the semigroup $\Sigma'$ and think of every coupled cell network map subject to $\Sigma$ as a (special case of a) coupled cell network map subject to the semigroup $\Sigma'$. 

To illustrate that this augmentation is natural, let us finish this section by mentioning a result from \cite{CCN} concerning the synchronous solutions of a network dynamical system. We recall the following well-known definition:

\begin{definition}
Let $\Sigma=\{\sigma_1, \ldots, \sigma_n\}$ be a collection of maps, not necessarily forming a semigroup, and
$P=\{P_1,\ldots, P_r\}$ a partition of $\{1,\dots, N\}$.
If the subspace 
$$ {\rm Syn}_{P} :=\{x\in V^N\, |\ x_{i_1}=x_{i_2} \, \mbox{when} \ i_1 \ \mbox{and}\ i_2\ \mbox{are in the same element of}\ P\, \}$$
is an invariant submanifold for the dynamics of $\gamma_f$ for every $f\in C^{\infty}(V^n,V)$, then we call ${\rm Syn}_{P}$ a {\it (robust) synchrony space} for the network defined by $\Sigma$.
\end{definition}
Interestingly, the synchrony spaces of networks subject to $\Sigma$ are the same as those for networks subject to $\Sigma'$. This means that the extension from $\Sigma$ to $\Sigma'$ does not have any effect on synchrony. Indeed, let us state the following result from \cite{CCN}. The proof, that we do not give here, is easy and uses the concept of a {\it balanced partition} of the cells \cite{curious}, \cite{golstew3}, \cite{torok}, \cite{CCN}, \cite{stewart1}, \cite{pivato}.
\begin{lemma}\label{robustness}
Let $\Sigma=\{\sigma_1, \ldots, \sigma_n\}$ be a collection of maps, not necessarily forming a semigroup, and
$P=\{P_1,\ldots, P_r\}$ a partition of $\{1,\dots, N\}$. Then ${\rm Syn}_P$ is a synchrony space for $\Sigma$ if and only if it is a synchrony space for the semigroup $\Sigma'$ generated by $\Sigma$. 
\end{lemma}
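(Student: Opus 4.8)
The plan is to reduce the lemma to the standard characterization of robust synchrony spaces by \emph{balanced partitions}, a criterion that is available for an arbitrary collection of maps and not merely for semigroups. Write $i \equiv_P i'$ when $i$ and $i'$ lie in the same element of $P$, and call $P$ \emph{balanced for $\Sigma$} if $i \equiv_P i'$ implies $\sigma_j(i) \equiv_P \sigma_j(i')$ for every $1 \leq j \leq n$. The first step is to recall from \cite{curious}, \cite{golstew3}, \cite{torok}, \cite{CCN} that ${\rm Syn}_P$ is a synchrony space for $\Sigma$ if and only if $P$ is balanced for $\Sigma$: the substantive direction uses that the requirement $\gamma_f(x) \in {\rm Syn}_P$ for all $x \in {\rm Syn}_P$ means $(\gamma_f(x))_i = (\gamma_f(x))_{i'}$ whenever $i \equiv_P i'$, and — letting $x$ range over all of ${\rm Syn}_P$ and $f$ over all smooth maps $V^n \to V$ — this forces $x_{\sigma_j(i)} = x_{\sigma_j(i')}$ for each $j$, i.e. $\sigma_j(i) \equiv_P \sigma_j(i')$. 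Applying this characterization to $\Sigma$ and to $\Sigma'$ separately, the lemma becomes the purely combinatorial assertion that $P$ is balanced for $\Sigma$ if and only if $P$ is balanced for $\Sigma'$.

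One implication of this combinatorial statement is immediate: since $\Sigma \subseteq \Sigma'$, a partition balanced for $\Sigma'$ is a fortiori balanced for $\Sigma$. For the converse, I would use that $\Sigma'$, being the smallest semigroup containing $\Sigma$, is exactly the set of finite composites $\sigma_{j_1} \circ \cdots \circ \sigma_{j_k}$ with $\sigma_{j_1}, \ldots, \sigma_{j_k} \in \Sigma$. The key point is that balancedness is stable under composition: if $\sigma$ and $\tau$ each send $\equiv_P$-equivalent indices to $\equiv_P$-equivalent indices, then so does $\sigma \circ \tau$, because $i \equiv_P i'$ gives $\tau(i) \equiv_P \tau(i')$ and hence $\sigma(\tau(i)) \equiv_P \sigma(\tau(i'))$. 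A routine induction on the word length $k$ then shows that every element of $\Sigma'$ preserves $\equiv_P$, which is precisely the statement that $P$ is balanced for $\Sigma'$. Combined with the characterization recalled above, this proves the lemma.

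I do not anticipate a genuine obstacle here; the argument is essentially bookkeeping. The two points that need a little care are, first, stating the balanced-partition criterion in the form that applies to an \emph{arbitrary} collection of maps, so that it can be invoked for $\Sigma$ and $\Sigma'$ on the same footing, and second, recording that $\Sigma'$ is exactly the set of finite composites of elements of $\Sigma$, which is the usual description of the semigroup generated by a family of self-maps. As an alternative, the forward implication of the lemma can be obtained directly without the balanced-partition criterion: as already noted in this section, for smooth $f: V^n \to V$ the map $\gamma_f$ subject to $\Sigma$ coincides with $\gamma_{f'}$ subject to $\Sigma'$ for $f'(X_1,\ldots,X_{n'}):=f(X_1,\ldots,X_n)$, so $\{\gamma_f\, |\, f:V^n\to V\ \mbox{smooth}\} \subseteq \{\gamma_{f'}\, |\, f':V^{n'}\to V\ \mbox{smooth}\}$, and invariance of ${\rm Syn}_P$ under the larger family trivially implies invariance under the smaller one; only the reverse implication genuinely requires the composition argument above.
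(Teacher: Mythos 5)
Your argument is correct and follows exactly the route the paper indicates: the paper omits the proof but states that it ``uses the concept of a balanced partition,'' and your reduction to the combinatorial statement that $P$ is balanced for $\Sigma$ if and only if it is balanced for $\Sigma'$ (trivial in one direction by inclusion, and by composition-stability plus induction on word length in the other) is precisely that argument. The only point worth recording explicitly is that the step ``$x_{\sigma_j(i)}=x_{\sigma_j(i')}$ for all $x\in{\rm Syn}_P$ implies $\sigma_j(i)\equiv_P\sigma_j(i')$'' uses $V\neq 0$, which is a harmless standing assumption.
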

Lemma \ref{robustness} shows that the extension from $\Sigma$ to $\Sigma'$ is harmless from the point of view of synchrony. 

\section{Hidden symmetry}\label{sechidden}
We will now show that every homogeneous coupled cell network is conjugate to a network that is equivariant under a certain action of a semigroup. The symmetry of the latter network thus acts as a hidden symmetry for the original network. 


For the remainder of this paper, let us assume that $\Sigma=\{\sigma_1, \ldots, \sigma_n\}$ is a semigroup (i.e. $\Sigma=\Sigma'$ and the necessary extension has taken place). To understand the hidden symmetries of the networks subject to $\Sigma$, one should note that every $\sigma_j\in \Sigma$ induces a map 
$$\widetilde \sigma_j: \{1,\ldots, n\}\to\{1, \ldots, n\}\ \mbox{via the formula}\ \sigma_{\widetilde \sigma_j(k)} = \sigma_j \circ \sigma_k\, .$$
The map $\widetilde \sigma_j$ encodes the left-multiplicative behavior of $\sigma_j$. We shall write $\widetilde \Sigma := \{\widetilde \sigma_1, \ldots, \widetilde \sigma_n\}$. 
The following result is easy to prove:
\begin{proposition}
If $\Sigma$ is a semigroup with unit, then so is $\widetilde \Sigma$ and the map 
$$\sigma_j \mapsto \widetilde \sigma_j\ \mbox{from}\ \Sigma\ \mbox{to}\ \widetilde \Sigma$$ is a homomorphism of semigroups.
\end{proposition}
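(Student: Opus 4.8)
The plan is to proceed in three stages: first make sure the maps $\widetilde\sigma_j$ are well defined, then prove the multiplicativity identity $\widetilde\sigma_{j_1}\circ\widetilde\sigma_{j_2}=\widetilde{\sigma_{j_1}\circ\sigma_{j_2}}$, and only at the end invoke the unit of $\Sigma$ to obtain distinctness of the $\widetilde\sigma_j$ and the existence of a unit in $\widetilde\Sigma$.

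For the first stage I would note that, because $\Sigma$ is a semigroup, $\sigma_j\circ\sigma_k\in\Sigma$ for all $1\le j,k\le n$, so some index $\ell$ satisfies $\sigma_\ell=\sigma_j\circ\sigma_k$; since the elements of $\Sigma$ are distinct this $\ell$ is unique, so $\widetilde\sigma_j(k):=\ell$ is unambiguous. No unit is needed here. For the second stage I would fix $j_1,j_2$, let $j_3$ be the index with $\sigma_{j_3}=\sigma_{j_1}\circ\sigma_{j_2}$, and apply the defining relation twice, using associativity of composition of maps, to get, for every $k$,
\[
\sigma_{(\widetilde\sigma_{j_1}\circ\widetilde\sigma_{j_2})(k)}=\sigma_{j_1}\circ\sigma_{\widetilde\sigma_{j_2}(k)}=\sigma_{j_1}\circ\sigma_{j_2}\circ\sigma_k=\sigma_{j_3}\circ\sigma_k=\sigma_{\widetilde\sigma_{j_3}(k)}\, ,
\]
and then uniqueness of the index representation forces $(\widetilde\sigma_{j_1}\circ\widetilde\sigma_{j_2})(k)=\widetilde\sigma_{j_3}(k)$, i.e. $\widetilde\sigma_{j_1}\circ\widetilde\sigma_{j_2}=\widetilde\sigma_{j_3}\in\widetilde\Sigma$. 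This simultaneously shows that $\widetilde\Sigma$ is closed under composition and that $\sigma_j\mapsto\widetilde\sigma_j$ preserves products.

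The third stage is where the hypothesis that $\Sigma$ contains a unit $\sigma_e$ enters. From $\sigma_{\widetilde\sigma_j(e)}=\sigma_j\circ\sigma_e=\sigma_j$ one reads off $\widetilde\sigma_j(e)=j$, so $\widetilde\sigma_{j_1}=\widetilde\sigma_{j_2}$ implies $j_1=j_2$; hence the $\widetilde\sigma_j$ are pairwise distinct, and $\widetilde\Sigma$, being a collection of $n$ distinct maps closed under composition, is a semigroup (indeed $\sigma_j\mapsto\widetilde\sigma_j$ is a bijection). Likewise $\sigma_{\widetilde\sigma_e(k)}=\sigma_e\circ\sigma_k=\sigma_k$ yields $\widetilde\sigma_e=\mathrm{id}_{\{1,\ldots,n\}}$, which acts as a unit for $\widetilde\Sigma$, so $\widetilde\Sigma$ is a monoid and the homomorphism sends the unit to the unit.

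I do not expect a serious obstacle: once the defining relation $\sigma_{\widetilde\sigma_j(k)}=\sigma_j\circ\sigma_k$ is unwound, everything reduces to associativity of map composition plus uniqueness of the index $\ell$ with $\sigma_\ell=\sigma_j\circ\sigma_k$. The one point I would be careful to articulate is why the unit is genuinely needed — it plays no role in well-definedness or in multiplicativity, but it is exactly what keeps the $\widetilde\sigma_j$ from collapsing onto one another (for a right-zero semigroup, for instance, every $\widetilde\sigma_j$ is the identity map), which would otherwise violate the distinctness condition built into the definition of a semigroup.
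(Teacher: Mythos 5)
Your proof is correct and follows essentially the same route as the paper's: associativity of composition plus distinctness of the $\sigma_j$ gives multiplicativity, and evaluating at the unit's index gives both $\widetilde\sigma_{i^*}=\mathrm{id}$ and the pairwise distinctness of the $\widetilde\sigma_j$. Your additional remarks on well-definedness and on why the unit is indispensable (the right-zero example) are correct but do not change the argument.
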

\begin{proof}
By definition, it holds for all $i,j,k$ that
$$\sigma_{\widetilde{ (\sigma_i \circ \sigma_j)}(k)} = (\sigma_i \circ \sigma_j) \circ \sigma_k  = \sigma_i\circ (\sigma_j\circ \sigma_k)= \sigma_i\circ \sigma_{\widetilde \sigma_j(k)} = \sigma_{\widetilde \sigma_i ( \widetilde \sigma_j(k))}\, . $$
Because $\Sigma$ is a semigroup (and hence its elements are distinct), this implies that
$$\widetilde{\sigma_{i}\circ\sigma_{j}} =\widetilde \sigma_{i}\circ \widetilde \sigma_{j}\, .$$
In other words, $\widetilde \Sigma$ is closed under composition and the map $\sigma_j \mapsto \widetilde \sigma_j$ is a homomorphism.
  
It remains to check that if $\Sigma$ has a unit, then so does $\widetilde \Sigma$ and its elements are distinct. So let us assume that $\sigma_{i^*}$ is the unit of $\Sigma$, i.e. that $\sigma_{i^*}\circ \sigma_{j} = \sigma_j\circ \sigma_{i^*}= \sigma_j$ for all $j$.
Then
$$\sigma_{\widetilde \sigma_{i^*}(j)} = \sigma_{i^*}\circ \sigma_{j} = \sigma_j\, .$$
This means that $\widetilde \sigma_{i^*}={\rm id}_{\{1, \ldots, n\}}$ and hence that $\widetilde \Sigma$ has a unit.

Finally, it also follows for $j\neq k$ that 
$$\sigma_{\widetilde \sigma_{j}(i^*)} = \sigma_{j}\circ \sigma_{i^*}=\sigma_j \neq \sigma_k = \sigma_{k}\circ \sigma_{i^*}=\sigma_{\widetilde \sigma_{k}(i^*)}$$ and therefore that the elements of $\widetilde \Sigma$ are distinct.
\end{proof}

\begin{definition}
We shall call a semigroup with (left- and right-)unit a {\it monoid}.
\end{definition}

\begin{example}\label{running5}
Recall our running Example \ref{running} in which 
$$\sigma_1[123]:=[123], \sigma_2[123]:=[121]\ \mbox{and} \ \sigma_3[123]:=[111]\, .$$ 
From the composition table of $\Sigma$ given in Example \ref{comptable}, we see that $\widetilde \sigma_1, \widetilde \sigma_2, \widetilde \sigma_3$ are given by
$$\widetilde \sigma_1[123]=[123], \widetilde \sigma_2[123]=[223]\ \mbox{and}\ \widetilde \sigma_3[123]=[333]\, .$$
These maps are not conjugate to the maps $\sigma_1, \sigma_2, \sigma_3$ by any permutation of the cells $\{1,2,3\}$ but do have the same composition table.

A graphical representation of the network maps $\widetilde \sigma_1, \widetilde \sigma_2, \widetilde \sigma_3$ is given in Figure \ref{pict2}.

\begin{figure}[ht]\renewcommand{\figurename}{\rm \bf \footnotesize Figure}
\centering
\begin{tikzpicture}[->,>=stealth',shorten >=1pt,auto,node distance=1.5cm,
                    thick,main node/.style={circle,draw,font=\sffamily\large\bfseries}]

  \node[main node] (1) {$x_1$};
  \node[main node] (2) [below of=1] {$x_2$};
  \node[main node] (3) [below of=2] {$x_3$};

   \node[main node] (4) [right of=1] {$x_1$};
  \node[main node] (5) [below of=4] {$x_2$};
  \node[main node] (6) [below of=5] {$x_3$};
  \node[main node] (7) [right of =4] {$x_1$};
  \node[main node] (8) [below  of=7] {$x_2$};
  \node[main node] (9) [below of=8] {$x_3$};

  \path[every node/.style={font=\sffamily\small}]
(1) edge [loop below] node {} (1)
(2) edge [loop below] node {} (2)
(3) edge [loop below] node {} (3)

(5) edge node {} (4)
(5) edge [loop below] node {} (5)
(6) edge [loop below] node {} (6)

(9) edge node {} (8)
(9) edge [bend right] node {} (7)
(9) edge [loop below] node {} (9)
    ;

\end{tikzpicture}
\caption{\footnotesize {\rm The collection $\{\widetilde \sigma_1, \widetilde \sigma_2, \widetilde \sigma_3\}$ depicted as a directed multigraph.}}
\label{pict2}
\end{figure}
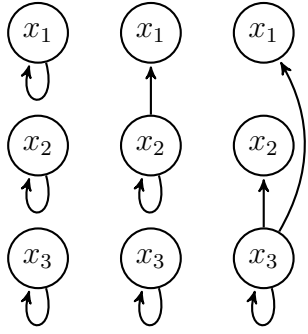

\end{example}
One can of course also study coupled cell networks subject to the monoid $\widetilde \Sigma$. They give rise to a differential equation on $V^n$ of the form 
$$\dot X_j = f(X_{\widetilde \sigma_1(j)}, \ldots, X_{\widetilde \sigma_n(j)})\ \mbox{for}\ 1\leq j \leq n\, .$$
These differential equations will turn out important enough to give the corresponding maps and vector fields a special name:

\begin{definition}
Let $\Sigma=\{\sigma_1, \ldots, \sigma_n\}$ be a monoid and $f:V^n\to V$ a smooth function. Then we call the coupled cell network map/vector field
$$\Gamma_f:V^n\to V^n\ \mbox{defined as} \ (\Gamma_{f})_j(X) := f (X_{\widetilde \sigma_1(j)}, \ldots, X_{\widetilde \sigma_n(j)}) \ \mbox{for}\ 1\leq j\leq n $$
the {\it fundamental network} of $\gamma_f$.
\end{definition}

\begin{example}\label{fundamentalexample}
For our running Example \ref{running} the maps $\widetilde \sigma_1, \widetilde \sigma_2, \widetilde \sigma_3$ were computed in Example \ref{running5}. We read off that 
the equations of motion of the fundamental network are given by
$$\begin{array}{ll} \dot X_1 = & f(X_1, X_2, X_3) \\  \dot X_2 = & f(X_2, X_2, X_3) \\ \dot X_3 = & f(X_3, X_3, X_3)\end{array} \, .$$
\end{example}

\begin{remark}
If $\Sigma$ is a monoid, then so is $\widetilde \Sigma$ and we can observe that
$$\widetilde \sigma_{\widetilde{\widetilde \sigma}_i(j)} = \widetilde \sigma_i \circ \widetilde \sigma_j = \widetilde{\sigma_i \circ  \sigma_j} = \widetilde{ \sigma_{\widetilde \sigma_i(j)}} =  \widetilde \sigma_{\widetilde \sigma_i(j)}\, . $$
This proves that $\widetilde{\widetilde \sigma}_i = \widetilde \sigma_i$ and thus that $\widetilde{\widetilde \Sigma} = \widetilde \Sigma$. In particular, $\Gamma_f$ is equal to its own fundamental network. In fact, this is the reason we call $\Gamma_f$ ``fundamental''.
\end{remark}
Theorem \ref{conjugation} below was proved in \cite{CCN} and demonstrates the relation between $\gamma_f$ and $\Gamma_f$:

\begin{theorem}\label{conjugation}
For $1\leq i\leq N$ let us define the map $\pi_i:V^N\to V^n$ by 
$$\pi_i(x_1, \ldots, x_N):=(x_{\sigma_1(i)}, \ldots, x_{\sigma_n(i)})\, .$$
All the maps $\pi_i$ conjugate $\gamma_f$ to $\Gamma_f$, that is 
$$\Gamma_f\circ \pi_i = \pi_i\circ \gamma_f\  \ \mbox{for all} \ 1\leq i\leq N \, .$$
\end{theorem}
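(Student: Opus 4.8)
The plan is to prove the identity $\Gamma_f\circ \pi_i = \pi_i\circ \gamma_f$ componentwise, by unwinding the definitions of $\pi_i$, $\gamma_f$ and $\Gamma_f$ and making repeated use of the semigroup relation $\sigma_{j_1}\circ\sigma_{j_2}=\sigma_{\widetilde\sigma_{j_1}(j_2)}$ that defines the maps $\widetilde\sigma_j$. Fix $1\leq i\leq N$ and $x=(x_1,\ldots,x_N)\in V^N$. First I would compute the left-hand side: by definition $\pi_i(x)=(x_{\sigma_1(i)},\ldots,x_{\sigma_n(i)})\in V^n$, so writing $Y:=\pi_i(x)$ we have $Y_k=x_{\sigma_k(i)}$, and then the $j$-th component of $\Gamma_f(Y)$ is $(\Gamma_f)_j(Y)=f(Y_{\widetilde\sigma_1(j)},\ldots,Y_{\widetilde\sigma_n(j)})=f(x_{\sigma_{\widetilde\sigma_1(j)}(i)},\ldots,x_{\sigma_{\widetilde\sigma_n(j)}(i)})$.

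Next I would compute the right-hand side: $\gamma_f(x)$ has $i$-th entry $(\gamma_f)_i(x)=f(x_{\sigma_1(i)},\ldots,x_{\sigma_n(i)})$, but more relevantly $(\gamma_f)_{\sigma_j(i)}(x)=f(x_{\sigma_1(\sigma_j(i))},\ldots,x_{\sigma_n(\sigma_j(i))})$. Applying $\pi_i$ to $\gamma_f(x)$ picks out the entries indexed by $\sigma_1(i),\ldots,\sigma_n(i)$, so the $j$-th component of $\pi_i(\gamma_f(x))$ is $(\gamma_f)_{\sigma_j(i)}(x)=f(x_{\sigma_1(\sigma_j(i))},\ldots,x_{\sigma_n(\sigma_j(i))})=f(x_{(\sigma_1\circ\sigma_j)(i)},\ldots,x_{(\sigma_n\circ\sigma_j)(i)})$. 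The proof is then completed by matching the two expressions term by term: for each $1\leq k\leq n$ one needs $\sigma_{\widetilde\sigma_k(j)}(i)=(\sigma_k\circ\sigma_j)(i)$, which is exactly the defining relation $\sigma_{\widetilde\sigma_k(j)}=\sigma_k\circ\sigma_j$ of $\widetilde\sigma_k$ evaluated at $i$. Hence $(\Gamma_f\circ\pi_i)(x)$ and $(\pi_i\circ\gamma_f)(x)$ agree in every component, so they are equal; since $x$ and $i$ were arbitrary this gives the theorem.

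There is essentially no serious obstacle here: the whole argument is a bookkeeping exercise in composing index maps, and the only subtlety is keeping the order of composition straight—note in particular that it is the \emph{left}-multiplication map $\widetilde\sigma_k$ (defined by $\sigma_{\widetilde\sigma_k(j)}=\sigma_k\circ\sigma_j$) that appears, and one must be careful that $\gamma_f$ inserts the inner index $\sigma_j(i)$ into its own input maps $\sigma_k$ in the right slot. A clean way to organize the write-up is to observe that $\pi_i(x)_k=x_{\sigma_k(i)}$ and then that $\pi_i\circ\gamma_f$ evaluated at $x$ has $j$-th slot $(\gamma_f\circ)(x)$ at index $\sigma_j(i)$, after which the semigroup identity does all the work. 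One may optionally remark at the end that since the semigroup $\Sigma$ acts transitively on the ``reachable'' part of $\{1,\ldots,N\}$ through the $\sigma_j$, and since each $\pi_i$ is built from exactly the columns $\sigma_1(i),\ldots,\sigma_n(i)$, the collection of maps $\{\pi_i\}$ is jointly injective when $\Sigma$ is a monoid (because then $\sigma_{i^*}=\mathrm{id}$ for the unit, so $\pi_i(x)$ contains $x_i$ itself); this is not needed for the statement but explains why the conjugation in Theorem \ref{conjugation} is useful.
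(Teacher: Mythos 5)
Your proof is correct and follows essentially the same route as the paper: the paper packages the key computation as the intertwining identity $A_{\sigma_j}\circ \pi_i = \pi_{\sigma_j(i)}$ for the input maps $A_{\sigma_j}(X)=(X_{\widetilde\sigma_1(j)},\ldots,X_{\widetilde\sigma_n(j)})$, but this is exactly your componentwise matching via $\sigma_{\widetilde\sigma_k(j)}=\sigma_k\circ\sigma_j$. One caveat on your optional closing remark (which is not part of the proof): it is false that the unit $\sigma_{i^*}$ of the monoid must be the identity map on $\{1,\ldots,N\}$, and the $\pi_i$ need not be jointly injective --- e.g.\ for $N=2$ and $\Sigma=\{\sigma_1\}$ with $\sigma_1(1)=\sigma_1(2)=1$, one has $\sigma_1\circ\sigma_1=\sigma_1$, so $\Sigma$ is a monoid, yet $\pi_1(x)=\pi_2(x)=(x_1)$ both discard $x_2$.
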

\begin{proof}
We remark that the definition of $\pi_i:V^N\to V^n$ is such that $$(\gamma_f)_i=f\circ \pi_i\, .$$ 
With this in mind, let us also define for $1\leq j \leq n$ the maps 
\begin{align}\label{Amapdef}
A_{\sigma_j}: V^n\to V^n \ \mbox{by}\ 
 A_{\sigma_j}(X_1, \ldots, X_{n}) :=(X_{\widetilde \sigma_1(j)}, \ldots, X_{\widetilde \sigma_n(j)})
\end{align}
for which it holds that $$(\Gamma_f)_j= f\circ A_{\sigma_j}\, .$$ 
With these definitions, we find that
\begin{align}
(A_{\sigma_j}\circ \pi_i)(x)& =A_{\sigma_j}(x_{\sigma_1(i)}, \ldots, x_{\sigma_n(i)}) = (x_{\sigma_{\widetilde\sigma_1(j)}(i)}, \ldots, x_{\sigma_{\widetilde\sigma_n(j)}(i)}) \nonumber \\ \nonumber  
& = (x_{\sigma_1(\sigma_j(i))}, \ldots, x_{\sigma_n(\sigma_j(i))}) = \pi_{\sigma_j(i)}(x)\, .
\end{align}
In other words, $$A_{\sigma_j}\circ \pi_i =\pi_{\sigma_j(i)}\, .$$ 
As a consequence, we have for $x\in V^N$ that
$$(\Gamma_f\circ\pi_i)_j(x)=f(A_{\sigma_j}(\pi_i(x))) =f(\pi_{\sigma_j(i)}(x)) = (\gamma_f(x))_{\sigma_j(i)} = (\pi_i\circ\gamma_f)_j(x)\, .$$
This proves the theorem.
\end{proof}
Theorem \ref{conjugation} says that $\Gamma_f$ is semi-conjugate to $\gamma_f$. In particular, every $\pi_i$ sends integral curves of $\gamma_f$ to integral curves of $\Gamma_f$ (and discrete-time orbits of $\gamma_f$ to those of $\Gamma_f$). The opposite need not be true though, because it may happen that none of the $\pi_i$ is invertible. 

In addition, the dynamics of $\gamma_f$ can be reconstructed from the dynamics of $\Gamma_f$. More precisely, when $x(t)$ is an integral curve of $\gamma_f$ and $X_{(i)}(t)$ are integral curves of $\Gamma_f$ with $X_{(i)}(0)=\pi_i(x(0))$, then $\pi_i(x(t)) = X_{(i)}(t)$
and thus $$\dot x_i(t) = f(\pi_i(x(t))) = f(X_{(i)}(t)) \ \mbox{for}\ 1\leq i\leq N\, .$$ 
This means that $x(t)$ can simply be obtained by integration. In other words, it suffices to study the dynamics of $\Gamma_f$ to understand the dynamics of $\gamma_f$.  

\begin{example}\label{conjugateexample}
In our running Example \ref{running}, the maps $\pi_1, \pi_2, \pi_3: V^3\to V^3$ are given by 
$$\begin{array}{l} \pi_1(x_1, x_2, x_3) = (x_1, x_1, x_1)\, , \\ \pi_2(x_1, x_2, x_3) = (x_2, x_2, x_1)\, ,\\ \pi_3(x_1, x_2, x_3) = (x_3, x_1, x_1)\, . \end{array}$$
Although none of these maps is invertible, they indeed send the solutions of 
$$\begin{array}{ll} \dot x_1 = & f(x_1, x_1, x_1) \\  \dot x_2 = & f(x_2, x_2, x_1) \\ \dot x_3 = & f(x_3, x_1, x_1)\end{array} \, $$
to solutions of the equations
$$\begin{array}{ll} \dot X_1 = & f(X_1, X_2, X_3) \\  \dot X_2 = & f(X_2, X_2, X_3) \\ \dot X_3 = & f(X_3, X_3, X_3)\end{array} \, .$$
\end{example}

\begin{remark} One could think of the maps $\pi_i$ as forming ``shadows'' of the dynamics of $\gamma_f$ in the dynamics of $\Gamma_f$, in such a way that the original dynamics can be reproduced from all its shadows.

At the same time, the transition from $\gamma_f$ to $\Gamma_f$ is  reminiscent of the symmetry reduction of an equivariant dynamical system: the dynamics of $\gamma_f$ descends to the dynamics of $\Gamma_f$ and the dynamics of $\gamma_f$ can be reconstructed from that of $\Gamma_f$ by means of integration. 

Most importantly, $\Gamma_f$ captures all the dynamics of $\gamma_f$.
\end{remark}

\begin{remark}\label{equilibriaremark}
When $\gamma_f(x)=0$ then $\Gamma_f(\pi_i(x)) = \pi_i(\gamma_f(x))=0$, that is $\pi_i$ sends equilibria of $\gamma_f$ to equilibria of $\Gamma_f$. On the other hand, when $\Gamma_f(\pi_i(x))=0$, then 
$$(\gamma_f)_{\sigma_{j}(i)}(x)=f(\pi_{\sigma_j(i)}(x)) = f(A_{\sigma_{j}}(\pi_i(x))) = (\Gamma_f)_j(\pi_i(x)) = 0\, .$$ 
Applied to $j=i^*$ this gives that $(\gamma_f)_{i}(x)=0$ if $\Gamma_f(\pi_i(x))=0$, that is $x$ is an equilibrium of $\gamma_f$ as soon as all maps $\pi_i$ send it to an equilibrium.

We conclude that $x\in V^N$ is an equilibrium point of $\gamma_f$ if and only if all the points $\pi_i(x)\in V^n \ (1\leq i\leq N)$ are equilibria of $\Gamma_f$. With this in mind, we can determine the equilibria of $\gamma_f$ from those of $\Gamma_f$.
\end{remark}

\noindent There are two major advantages of studying $\Gamma_f$ instead of $\gamma_f$:
\begin{itemize}
\item[{\bf 1.}] The network structure of $\Gamma_f$ only depends on the composition/multiplication table (i.e. the semigroup/monoid structure) of $\Sigma$. More precisely: when $\Sigma^{(1)}$ and $\Sigma^{(2)}$ are isomorphic monoids, then the fundamental networks $\Gamma_f^{(1)}$ and $\Gamma_f^{(2)}$ are (bi-)conjugate. This is not true in general for $\gamma_f^{(1)}$ and $\gamma_f^{(2)}$. Thus, every abstract monoid corresponds to precisely one fundamental network.
\item[{\bf 2.}] The fundamental network $\Gamma_f$ is fully characterized by symmetry. This is the content of Theorem \ref{equithm} below, and one of the crucial points of this paper.
\end{itemize}


\begin{theorem}\label{equithm}
Let $\Sigma=\{\sigma_1, \ldots, \sigma_n\}$ be a monoid with unit $\sigma_{i^*}$ and define the maps
\begin{align}\label{Amapdef2}
A_{\sigma_j}: V^n\to V^n \ \mbox{by}\ 
 A_{\sigma_j}(X_1, \ldots, X_{n}) :=(X_{\widetilde \sigma_1(j)}, \ldots, X_{\widetilde \sigma_n(j)})\ \mbox{for}\ 1\leq j\leq n\, .
\end{align}
Then the following are true:
\begin{itemize}
\item The maps $A_{\sigma_j}$ form a representation of $\Sigma$ in $V^n$, i.e. $A_{\sigma_{i^*}}={\rm id}_{V^n}$ and
$$A_{\sigma_j}\circ A_{\sigma_k} = A_{\sigma_j\circ \sigma_k}\ \mbox{for all}\ 1\leq j, k \leq n\, .$$
\item Each fundamental network $\Gamma_f:V^n\to V^n$ is equivariant under this representation:
\begin{align}\label{equi}
\Gamma_f\circ A_{\sigma_j}=A_{\sigma_j}\circ \Gamma_f \ \mbox{for all}\ 1\leq j \leq n\, .
\end{align}
\item Conversely, if $\Gamma:V^n\to V^n$ satisfies $$\Gamma\circ A_{\sigma_j}=A_{\sigma_j}\circ \Gamma\ \mbox{for all}\ 1\leq j \leq n\, ,$$ 
then there exists a function $f:V^n\to V$ such that 
$$\Gamma_j(X)= (\Gamma_f)_j(X) = f(A_{\sigma_j}X)\ \mbox{for all}\ 1\leq j\leq n\, .$$ 
\end{itemize}
\end{theorem}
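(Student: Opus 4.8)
The plan is to verify the three assertions in order, since each one is essentially a direct unwinding of definitions using the fact that $\Sigma$ is a monoid.

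\textbf{Representation property.} First I would establish that $A_{\sigma_j}\circ A_{\sigma_k}=A_{\sigma_j\circ\sigma_k}$. The cleanest route is to exploit the identity already proved in Theorem \ref{conjugation}, namely $A_{\sigma_j}\circ\pi_i=\pi_{\sigma_j(i)}$; however, since the $\pi_i$ need not be surjective, a self-contained computation is safer: apply $A_{\sigma_j}\circ A_{\sigma_k}$ to an arbitrary $X=(X_1,\dots,X_n)$, noting $A_{\sigma_k}X$ has $\ell$-th entry $X_{\widetilde\sigma_\ell(k)}$, so the $\ell$-th entry of $A_{\sigma_j}(A_{\sigma_k}X)$ is the $\widetilde\sigma_\ell(j)$-th entry of $A_{\sigma_k}X$, which is $X_{\widetilde\sigma_{\widetilde\sigma_\ell(j)}(k)}$. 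By the semigroup homomorphism property $\widetilde\sigma_{\widetilde\sigma_\ell(j)}\circ\widetilde\sigma_\ell(k)$ must be compared with the $\ell$-th entry of $A_{\sigma_j\circ\sigma_k}X$, which is $X_{\widetilde\sigma_\ell(m)}$ where $\sigma_m=\sigma_j\circ\sigma_k$; the relation $\widetilde\sigma_j\circ\widetilde\sigma_k=\widetilde{\sigma_j\circ\sigma_k}$ (the Proposition above) gives $\widetilde\sigma_\ell(m)=\widetilde\sigma_\ell(\widetilde\sigma_j\circ\widetilde\sigma_k)$... so the indices agree. That $A_{\sigma_{i^*}}=\mathrm{id}$ follows from $\widetilde\sigma_{i^*}=\mathrm{id}_{\{1,\dots,n\}}$, which was shown in the proof of the Proposition.

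\textbf{Equivariance.} Next I would prove $\Gamma_f\circ A_{\sigma_j}=A_{\sigma_j}\circ\Gamma_f$. Using $(\Gamma_f)_k=f\circ A_{\sigma_k}$, the $j$-applied side gives $(A_{\sigma_j}\circ\Gamma_f)_k(X)=(\Gamma_f)_{\widetilde\sigma_k(j)}(X)=f(A_{\sigma_{\widetilde\sigma_k(j)}}X)$, while $(\Gamma_f\circ A_{\sigma_j})_k(X)=f(A_{\sigma_k}(A_{\sigma_j}X))=f(A_{\sigma_k\circ\sigma_j}X)$, and these match because $\sigma_{\widetilde\sigma_k(j)}=\sigma_j\circ\sigma_k$ — wait, here I must be careful about the order: $\widetilde\sigma_k$ encodes left multiplication by $\sigma_k$, so $\sigma_{\widetilde\sigma_k(j)}=\sigma_k\circ\sigma_j$, matching $A_{\sigma_k\circ\sigma_j}$. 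Tracking this left/right bookkeeping correctly is the one genuinely delicate point, and I would double-check it against the running example ($\widetilde\sigma_2[123]=[223]$) before committing.

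\textbf{Converse.} For the third part, suppose $\Gamma$ commutes with all $A_{\sigma_j}$. Define $f:=\Gamma_{i^*}$, the $i^*$-th component of $\Gamma$, where $i^*$ indexes the unit. Then for any $j$, since $A_{\sigma_{i^*}}=\mathrm{id}$ and using equivariance of $\Gamma$, compute $(\Gamma)_j(X)$: the idea is that $\Gamma_j(X)=(\Gamma\circ A_{\sigma_j}X$ evaluated appropriately$)$ — concretely, $\Gamma_j(X)=(A_{\sigma_j}\circ\Gamma)_{i^*}(X)$? No: $(A_{\sigma_j}\circ\Gamma)_{i^*}(X)=\Gamma_{\widetilde\sigma_{i^*}(j)}(X)=\Gamma_j(X)$ since $\widetilde\sigma_{i^*}=\mathrm{id}$. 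On the other hand $(A_{\sigma_j}\circ\Gamma)_{i^*}=( \Gamma\circ A_{\sigma_j})_{i^*}=\Gamma_{i^*}(A_{\sigma_j}X)=f(A_{\sigma_j}X)$. Hence $\Gamma_j(X)=f(A_{\sigma_j}X)=(\Gamma_f)_j(X)$ for all $j$, as required; smoothness of $f$ is inherited from $\Gamma$. I expect the converse to be the part most likely to trip up on the index $i^*$ subtlety (ensuring $\widetilde\sigma_{i^*}=\mathrm{id}$ is used in exactly the right spot), but it is a two-line argument once the first two bullets are in place.
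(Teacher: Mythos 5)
Your overall route is the same as the paper's: verify the composition law by chasing indices, get equivariance from $(\Gamma_f)_k=f\circ A_{\sigma_k}$ together with $A_{\sigma_k}\circ A_{\sigma_j}=A_{\sigma_k\circ\sigma_j}$, and for the converse set $f:=\Gamma_{i^*}$ and evaluate the equivariance identity at the component $i^*$. The equivariance and converse bullets are correct as written (including your self-correction of the left/right order, since $\sigma_{\widetilde\sigma_k(j)}=\sigma_k\circ\sigma_j$).

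Two local points need repair. First, your justification of $A_{\sigma_{i^*}}={\rm id}$ cites the wrong fact: the $k$-th component of $A_{\sigma_{i^*}}X$ is $X_{\widetilde\sigma_k(i^*)}$, so what you need is that every $\widetilde\sigma_k$ \emph{fixes the point} $i^*$, i.e. $\widetilde\sigma_k(i^*)=k$, which follows from the \emph{right}-unit property $\sigma_{\widetilde\sigma_k(i^*)}=\sigma_k\circ\sigma_{i^*}=\sigma_k$ and distinctness of the $\sigma_j$. The statement $\widetilde\sigma_{i^*}={\rm id}_{\{1,\dots,n\}}$ you invoke is a different fact (it comes from the \emph{left}-unit property and is the one you correctly use later in the converse); it does not by itself give $A_{\sigma_{i^*}}={\rm id}$. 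Second, the middle of your composition-law argument is garbled: expressions like $\widetilde\sigma_{\widetilde\sigma_\ell(j)}\circ\widetilde\sigma_\ell(k)$ and $\widetilde\sigma_\ell(\widetilde\sigma_j\circ\widetilde\sigma_k)$ do not typecheck, since $\widetilde\sigma_\ell$ takes an index, not a map. The identity you actually need is $\widetilde\sigma_{\widetilde\sigma_\ell(j)}(k)=\widetilde\sigma_\ell(\widetilde\sigma_j(k))$, and your idea of deducing it from the homomorphism property does work: $\widetilde\sigma_{\widetilde\sigma_\ell(j)}=\widetilde{\sigma_\ell\circ\sigma_j}=\widetilde\sigma_\ell\circ\widetilde\sigma_j$, then evaluate at $k$. (The paper instead derives this identity directly from $\sigma_{\widetilde\sigma_a(b)}=\sigma_a\circ\sigma_b$ and associativity, which amounts to the same computation.) With those two fixes the proof is complete.
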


\begin{proof}
Because $\sigma_{\widetilde \sigma_j(i^*)} = \sigma_j\circ \sigma_{i^*} = \sigma_j$, it holds that $\widetilde \sigma_j(i^*) = j$ and hence that $A_{\sigma_{i^*}}(X) = (X_{\widetilde \sigma_1(i^*)}, \ldots, X_{\widetilde \sigma_n(i^*)})=X$. This proves that $A_{\sigma_{i^*}}={\rm id}_{V^n}$. Furthermore, by definition
$$\sigma_{\widetilde \sigma_{\widetilde \sigma_{k}(j_1)}(j_2)} = \sigma_{\widetilde \sigma_{k}(j_1)}\circ \sigma_{j_2}= \sigma_k\circ\sigma_{j_1}\circ\sigma_{j_2} =  \sigma_k\circ\sigma_{\widetilde \sigma_{j_1}(j_2)}= \sigma_{\widetilde \sigma_k(\widetilde \sigma_{j_1}(j_2))} \, .$$
This implies that $\widetilde \sigma_{\widetilde \sigma_{k}(j_1)}(j_2) = \widetilde \sigma_k(\widetilde \sigma_{j_1}(j_2))$, which in turn yields that
\begin{align}\nonumber
(&A_{\sigma_{j_1}}\circ A_{\sigma_{j_2}}) (X_1, \ldots, X_n) = A_{\sigma_{j_1}}(X_{\widetilde \sigma_{1}(j_2)}, \ldots, X_{\widetilde \sigma_{n}(j_2)})=  (X_{\widetilde \sigma_{\widetilde \sigma_{1}(j_1)}(j_2)}, \ldots, X_{\widetilde \sigma_{\widetilde \sigma_{n}(j_1)}(j_2)})   \\ \nonumber 
&= (X_{\widetilde \sigma_{1}(\widetilde \sigma_{j_1}(j_2))}, \ldots, X_{\widetilde \sigma_{n}(\widetilde \sigma_{j_1}(j_2))}) = A_{\sigma_{\widetilde \sigma_{j_1}(j_2)}}(X_1, \ldots, X_n)= A_{\sigma_{j_1}\circ \sigma_{j_2}}(X_1, \ldots, X_n) \ .
\end{align}
This proves the first claim of the theorem. 

The second claim from the first claim and from the fact that $(\Gamma_f)_k=f\circ A_{\sigma_k}$:
\begin{align}
(\Gamma_f\circ A_{\sigma_j})_k(X) &= f(A_{\sigma_k}\circ A_{\sigma_j}X) = f(A_{\sigma_k\circ\sigma_j}X)= \nonumber \\ \nonumber f(A_{\sigma_{\widetilde \sigma_k(j)}}X) &= (\Gamma_f)_{\widetilde \sigma_k(j)}(X) = (A_{\sigma_j}\circ \Gamma_f)_k(X)\, .
\end{align}
To prove the third claim, assume that
$A_{\sigma_j}\circ \Gamma = \Gamma \circ A_{\sigma_j}$ for all $1\leq j\leq n$. Recalling that $(A_{\sigma_j}\Gamma)_i = \Gamma_{\widetilde \sigma_i(j)}$, this  implies that 
\begin{align}\label{symmetry}
\Gamma_{\widetilde \sigma_i(j)}(X) = (A_{\sigma_j}\circ \Gamma)_i(X) =
(\Gamma \circ A_{\sigma_j})_i(X)  = \Gamma_i(A_{\sigma_j}X)\, .
\end{align}
When $\sigma_{i^*}$ is the unit of $\Sigma$, then $\sigma_{\widetilde \sigma_{i^*}(j)}=\sigma_{i^*}\circ \sigma_{j}=\sigma_j$, so $\widetilde \sigma_{i^*}={\rm id}_{\{1,\ldots, n\}}$. As a consequence, applied to $i=i^*$, equation (\ref{symmetry}) implies in particular that 
$$\Gamma_{j}(X) = \Gamma_{i^*}(A_{\sigma_j}X)\ \mbox{for all}\ j=1, \ldots, n\, .$$
If we now choose $f:=\Gamma_{i^*}, V^n\to V$, then $\Gamma=\Gamma_f$ as required. 
\end{proof}
\noindent Theorem \ref{equithm} says that a vector field $\Gamma: V^n\to V^n$ is a fundamental coupled cell network for the monoid $\Sigma$ if and only if it is equivariant under the action $\sigma_j\mapsto A_{\sigma_j}$ of this monoid. In particular, all the degeneracies that occur in the dynamics of the fundamental network $\dot X = \Gamma_f(X)$ are due to (monoid-)symmetry. Such degeneracies may include the existence of synchrony spaces and the occurrence of double eigenvalues.

\begin{remark}
We shall be referring to the transformations $A_{\sigma_j}$ as ``symmetries'' of $\Gamma_f$, even if these transformations may not be invertible.  

As is the case for groups of (invertible) symmetries, our semigroup of symmetries can force the existence of synchronous solutions. For instance, the fixed point set of any of the maps $A_{\sigma_j}$ is flow-invariant for any $\Sigma$-equivariant vector field. Because the $A_{\sigma_j}$ may not be invertible, there can be many more invariant subsets though. For example, the image ${\rm im}\, A_{\sigma_j}$ of a symmetry is flow-invariant and the inverse image $A_{\sigma_j}^{-1}(W)$ of a flow-invariant subspace $W$ is flow-invariant. We conclude that synchrony spaces can arise as symmetry spaces in many different ways.

Also, recall from Theorem \ref{conjugation} that the maps $\pi_i: V^N\to V^n$ send orbits of $\gamma_f$ to orbits of $\Gamma_f$. As a consequence, ${\rm im}\, \pi_i \subset V^n$ is invariant under the flow of $\Gamma_f$. In fact, it was proved in \cite{CCN} that this image is a robust synchrony space corresponding to a balanced partition of the cells of the fundamental network.
\end{remark}

\begin{example}
The fundamental network of our running Example \ref{running} was given by 
\begin{align}\label{fundamentalexampleformula}
\begin{array}{ll} \dot X_1 = & f(X_1, X_2, X_3) \\  \dot X_2 = & f(X_2, X_2, X_3) \\ \dot X_3 = & f(X_3, X_3, X_3)\end{array} \, .
\end{align}
In other words, the representation of the monoid $\{\sigma_1, \sigma_2, \sigma_3\}$ is given by
\begin{align}\nonumber
A_{\sigma_1}(X_1, X_2, X_3) = (X_1, X_2, X_3)\, ,  \\ \nonumber A_{\sigma_2}(X_1, X_2, X_3) = (X_2, X_2, X_3)\, , \\ \nonumber A_{\sigma_3}(X_1, X_2, X_3) = (X_3, X_3, X_3)\, .
\end{align}
One checks that this representation indeed consists of symmetries of (\ref{fundamentalexampleformula}). In this example, the nontrivial balanced partitions of the fundamental network are 
$$\{1,2,3\},  \{1,2\} \cup \{3\} \ \mbox{and}\ \{1\}\cup\{2,3\} \, .$$
These respectively correspond to the robust synchrony spaces 
$$\{X_1=X_2=X_3\}, \{X_1=X_2\} \ \mbox{and}\ \{X_2=X_3\}\, .$$
These synchrony spaces can both be characterized in terms of the conjucacies $\pi_i$ and in terms of the symmetries $A_{\sigma_j}$, namely
\begin{align}\nonumber
& \{X_1=X_2=X_3\}={\rm im}\, \pi_1 = {\rm Fix}\, A_{\sigma_3}= {\rm im}\, A_{\sigma_3}\, , \\ \nonumber
& \{X_1=X_2\}={\rm im}\, \pi_2 = {\rm Fix}\, A_{\sigma_2} = {\rm im} \, A_{\sigma_2} \ \mbox{and}\\ \nonumber 
& \{X_2=X_3\}={\rm im}\, \pi_3 = A_{\sigma_2}^{-1}({\rm im}\, A_{\sigma_3}) \, .
\end{align}
\end{example}


\section{Representations of semigroups}\label{secrepresentations}
In this section we present some rather well-known facts from the representation theory of semigroups. We choose to explain and prove these results in great detail, as our readers  may not be so familiar with them. One of the main goals of this section is to explain how semigroup symmetry can lead to degeneracies in the spectrum of the linearization of an equivariant vector field at a symmetric equilibrium. Although the theory in this section has strong similarities with the representation theory of compact groups, we would like to warn the reader in advance that the situation is slightly more delicate for semigroups. 

 Firstly, when $\Sigma$ is a semigroup and $W$ a finite dimensional real vector space, then a map 
$$A: \Sigma \to \mathfrak{gl}(W)\ \mbox{for which}\ A_{\sigma_j}\circ A_{\sigma_k} = A_{\sigma_j\circ \sigma_k}\ \mbox{for all} \ \sigma_j, \sigma_k\in\Sigma$$ 
will be called a representation of the semigroup $\Sigma$ in $W$.

A subspace $W_1\subset W$ is called a subrepresentation of $W$ if it is stable under the action the semigroup, that is if $A_{\sigma_j}(W_1)\subset W_1$ for all $\sigma_j\in \Sigma$.
\begin{definition}
A representation of $\Sigma$ in $W$ is called {\it indecomposable} if $W$ is not a direct sum $W=W_1\oplus W_2$ with $W_1$ and $W_2$ both nonzero subrepresentations of $W$. 

A representation of $\Sigma$ in $W$ is called {\it irreducible} if $W$ does not contain a subrepresentation $W_1\subset W$ with $W_1 \neq 0$ and $W_1\neq W$. 
\end{definition}
Clearly, an irreducible representation is indecomposable, but the converse is not true. By definition, every representation is a direct sum of indecomposable representations. In general, this statement is not true for irreducible representations either (unless for example $\Sigma$ is a compact group). Also, the decomposition of a representation into indecomposable representations is unique up to isomorphism. This is the content of the Krull-Schmidt theorem that we will formulate and prove below. 

When $A:\Sigma \to \mathfrak{gl}(W)$ and $A':\Sigma\to \mathfrak{gl}(W')$ are semigroup representations and $L: W\to W'$ is a linear map so that 
$$L\circ A_{\sigma_j} = A'_{\sigma_j}\circ L \ \mbox{for all}\ \sigma_j\in \Sigma \, ,$$
then we call $L$ a {\it homomorphism of representations} and write $L\in {\rm Hom}(W, W')$ - note that the dependence on the semigroup $\Sigma$ is not expressed by our notation. We shall call an invertible homomorphism an {\it isomorphism}. The following result will be useful later:
\begin{proposition}\label{twoisos}
Let $W, W'$ be indecomposable representations, $L_1\in {\rm Hom}(W, W')$ and $L_2\in {\rm Hom}(W', W)$. 
If $L_2\circ L_1$ is invertible, then both $L_1$ and $L_2$ are isomorphisms.
\end{proposition}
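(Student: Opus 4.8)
The plan is to imitate the classical argument that for endomorphisms of an indecomposable module, injectivity (or surjectivity) implies invertibility, and then bootstrap this to the two-sided situation. First I would consider the composite $E := L_2 \circ L_1 \in {\rm Hom}(W, W)$, which is invertible by hypothesis, and the composite $F := L_1 \circ L_2 \in {\rm Hom}(W', W')$. The key observation is that $F$ is idempotent up to conjugacy: $F^2 = L_1 (L_2 L_1) L_2 = L_1 E L_2$, and more usefully $F \circ L_1 = L_1 \circ E$, so $L_1$ intertwines $E$ and $F$. Since $E$ is invertible, I can form the endomorphism $P := L_1 \circ E^{-1} \circ L_2 \in {\rm Hom}(W', W')$ and check directly that $P^2 = L_1 E^{-1} (L_2 L_1) E^{-1} L_2 = L_1 E^{-1} E E^{-1} L_2 = L_1 E^{-1} L_2 = P$, so $P$ is an idempotent homomorphism on $W'$.

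Next I would use indecomposability of $W'$ to conclude $P = 0$ or $P = {\rm id}_{W'}$. Indeed, any idempotent $P \in {\rm Hom}(W', W')$ gives a direct sum decomposition $W' = \ker P \oplus {\rm im}\, P$ into subrepresentations (both $\ker P$ and ${\rm im}\, P$ are $\Sigma$-stable because $P$ commutes with the action), and indecomposability forces one summand to vanish. Now $P = 0$ is impossible: composing $P = L_1 E^{-1} L_2$ on the left with $L_2$ and on the right with $L_1$ gives $L_2 P L_1 = L_2 L_1 E^{-1} L_2 L_1 = E E^{-1} E = E$, which is invertible, hence nonzero; so $P \neq 0$ and therefore $P = {\rm id}_{W'}$.

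From $P = {\rm id}_{W'}$, i.e. $L_1 \circ (E^{-1} \circ L_2) = {\rm id}_{W'}$, we see $L_1$ has a right inverse; combined with $E^{-1} \circ L_2 \circ L_1 = E^{-1} \circ E = {\rm id}_W$, which says $E^{-1} \circ L_2$ is a left inverse of $L_1$, we conclude $L_1$ is invertible with $L_1^{-1} = E^{-1} \circ L_2$. Finally $L_2 = E \circ L_1^{-1}$ is a composite of invertible homomorphisms (and the inverse of a homomorphism of representations is again a homomorphism, since $A_{\sigma_j} \circ L^{-1} = L^{-1} \circ A'_{\sigma_j}$ follows by applying $L^{-1}$ on both sides of the intertwining relation for $L$), hence $L_2$ is an isomorphism as well.

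The only mild subtlety — and the step I would be most careful about — is the decomposition $W' = \ker P \oplus {\rm im}\, P$ for an idempotent $P$ in the semigroup setting: this is pure linear algebra over $\RR$ (every vector $w = Pw + (w - Pw)$ with $Pw \in {\rm im}\, P$ and $w - Pw \in \ker P$, and the sum is direct since $P$ restricted to ${\rm im}\, P$ is the identity), and $\Sigma$-invariance of both summands is immediate from $P \circ A'_{\sigma_j} = A'_{\sigma_j} \circ P$. No compactness, averaging, or invertibility of the $A_{\sigma_j}$ is needed anywhere, which is exactly why the statement survives the passage from groups to semigroups.
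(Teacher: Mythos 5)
Your proof is correct and is essentially the paper's own argument in different packaging: the idempotent $P = L_1\circ(L_2\circ L_1)^{-1}\circ L_2$ you construct is exactly the projection the paper writes down to split $W' = {\rm im}\, L_1 \oplus \ker L_2$ (indeed ${\rm im}\,P = {\rm im}\,L_1$ and $\ker P = \ker L_2$), and both arguments then invoke indecomposability of $W'$ to kill one summand. The conclusion that $L_1$ and $L_2$ are isomorphisms follows identically in both versions.
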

\begin{proof}
Note first of all that when $L_1\in {\rm Hom}(W, W')$ and $Y=L_1(X)$, then $A_{\sigma_j}'(Y) = A_{\sigma_j}'(L_1(X))=L_1(A_{\sigma_j}(X))$ and thus ${\rm im}\, L_1$ is a subrepresentation of $W'$. Similarly, when $L_2\in {\rm Hom}(W', W)$ and $L_2(X)=0$, then $L_2(A'_{\sigma_j}(X)) = A_{\sigma_j}(L_2(X))= 0$ and hence also $\ker L_2$ is a subrepresentation of $W'$. 

Because $L_2\circ L_1$ is invertible, it must hold that $L_1$ is injective, $L_2$ is surjective and $ {\rm im}\, L_1\cap \ker L_2=\{0\}$. Now, let $Z\in W'$. Then $Z= X+Y$ with $Y=L_1\circ (L_2\circ L_1)^{-1}\circ L_2 (Z)$ and $X=Z-Y$. It is clear that $X \in \ker L_2$ and that $Y \in {\rm im}\, L_1$  and we conclude that $W'={\rm im}\, L_1 \oplus \ker L_2$. But both ${\rm im}\, L_1$ and $\ker L_2$ are subrepresentations of $W'$ and $W'$ was assumed indecomposable. We conclude that $\ker L_2=0$ and ${\rm im} \, L_1 = W'$ and hence that $L_1$ is surjective and $L_2$ is injective. Thus, $L_1$ and $L_2$ are isomorphisms.
\end{proof} 
If $W$ is a semigroup representation, then we call an element of ${\rm Hom}(W,W)$ an {\it endomorphism} of $W$ and we shall write ${\rm End}(W):={\rm Hom}(W,W)$. 

\begin{remark}\label{linearization}
Assume that $\Gamma: W\to W$ is a vector field and $X_0\in W$ is a point such that
\begin{itemize}
\item[i)] $X_0$ is an equilibrium point of $\Gamma$, i.e. $\Gamma(X_0)=0$,
\item[ii)] $X_0$ is $\Sigma$-symmetric, i.e. $A_{\sigma_j}(X_0)=X_0$ for all $1\leq j \leq n$ and
\item[iii)] $\Gamma$ is $\Sigma$-equivariant, i.e. $\Gamma \circ A_{\sigma_j}=A_{\sigma_j} \circ \Gamma$ for all $1\leq j\leq n$.
\end{itemize}
Then differentiation of $\Gamma(A_{\sigma_j}(X))=A_{\sigma_j}(\Gamma(X))$ at $X=X_0=A_{\sigma_j}(X_0)$ yields that
$$L_0\circ A_{\sigma_j} = A_{\sigma_j}\circ L_0\ \mbox{if we set} \ L_0:= D_X\Gamma(X_0)\, .$$
This explains why we are interested in the endomorphisms of a representation of a semigroup: the linearization of an equivariant vector field at a symmetric equilibrium is an example of such an endomorphism.
\end{remark}
The following proposition states that the endomorphisms of an indecomposable representation fall into two classes.

\begin{proposition}\label{oneev}
Let $W$ be an indecomposable representation and $L\in {\rm End}(W)$. 
Then $L$ is either invertible or nilpotent.
\end{proposition}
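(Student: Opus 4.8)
The plan is to exploit the fact that $W$ is finite-dimensional and that the powers of $L$ generate a descending chain of subspaces that are all subrepresentations, so that the indecomposability of $W$ forces a dichotomy. First I would observe that if $L \in {\rm End}(W)$, then for every $m \geq 1$ the kernel $\ker L^m$ and the image ${\rm im}\, L^m$ are subrepresentations of $W$: this is because $L$ commutes with every $A_{\sigma_j}$, hence so does $L^m$, and the kernel and image of an intertwiner are always invariant (this is exactly the observation already made in the proof of Proposition \ref{twoisos}).

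Next I would invoke the Fitting-type argument. Since $\dim W < \infty$, the descending chain ${\rm im}\, L \supseteq {\rm im}\, L^2 \supseteq \cdots$ stabilizes and the ascending chain $\ker L \subseteq \ker L^2 \subseteq \cdots$ stabilizes; pick $m$ large enough that ${\rm im}\, L^m = {\rm im}\, L^{2m}$ and $\ker L^m = \ker L^{2m}$ (taking $m = \dim W$ works). The standard claim is then that $W = \ker L^m \oplus {\rm im}\, L^m$. For the intersection being zero: if $x \in \ker L^m \cap {\rm im}\, L^m$, write $x = L^m y$; then $L^{2m} y = L^m x = 0$, so $y \in \ker L^{2m} = \ker L^m$, hence $x = L^m y = 0$. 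For the sum being everything: given $x \in W$, since ${\rm im}\, L^m = {\rm im}\, L^{2m}$ there is $y$ with $L^m x = L^{2m} y$; then $x = (x - L^m y) + L^m y$ with $L^m y \in {\rm im}\, L^m$ and $x - L^m y \in \ker L^m$. A dimension count confirms the sum is direct.

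Now both summands $\ker L^m$ and ${\rm im}\, L^m$ are subrepresentations, and $W$ is indecomposable, so one of them must be zero. If ${\rm im}\, L^m = 0$, then $L^m = 0$ and $L$ is nilpotent. If $\ker L^m = 0$, then $L^m$ is injective, hence (finite dimensions) bijective, so $L$ itself is injective and therefore invertible. This is precisely the claimed dichotomy.

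The only genuinely delicate point is making sure the Fitting decomposition $W = \ker L^m \oplus {\rm im}\, L^m$ is handled cleanly — in particular verifying both the zero-intersection and the spanning properties from the stabilization of the two chains — but this is entirely routine linear algebra; the conceptual content is just "$\ker L^m$ and ${\rm im}\, L^m$ are subrepresentations, and indecomposability kills one of them." I do not anticipate any real obstacle, since all the structural input (intertwiners have invariant kernel and image) is already available from the proof of Proposition \ref{twoisos}.
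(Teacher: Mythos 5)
Your proposal is correct and follows exactly the paper's argument: the authors also take $n$ large enough that $W=\ker L^n\oplus{\rm im}\,L^n$ (the Fitting decomposition), note that both summands are subrepresentations, and let indecomposability kill one of them. You simply spell out the routine verification of the Fitting decomposition that the paper leaves implicit.
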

\begin{proof}
Let $n$ be large enough that $W=\ker L^n \oplus {\rm im}\, L^n$. Because $\ker L^n$ and ${\rm im}\, L^n$ are subrepresentations of $W$ and $W$ is indecomposable it follows that either $W=\ker L^n$ and $L$ is nilpotent, or $W={\rm im}\, L^n$ and $L$ is invertible. 
\end{proof} 
As a consequence of Proposition \ref{oneev}, we find that the endomorphisms of indecomposable representations have spectral degeneracies as follows.

\begin{corollary}
Let $W$ be an indecomposable representation and $L\in {\rm End}(W)$. Then either
\begin{itemize}
\item[i)] $L$ has one real eigenvalue, or
\item[ii)]  $L$ has one pair of complex conjugate eigenvalues.
\end{itemize}
\end{corollary}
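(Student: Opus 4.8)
The plan is to reduce everything to Proposition \ref{oneev}. First I would complexify: let $W_{\mathbb{C}} = W \otimes_{\mathbb{R}} \mathbb{C}$ and extend $L$ complex-linearly to $L_{\mathbb{C}} \in {\rm End}_{\mathbb{C}}(W_{\mathbb{C}})$. The eigenvalues of $L$ (as a real operator) are exactly the eigenvalues of $L_{\mathbb{C}}$, which come in complex-conjugate pairs (conjugate eigenvalues having conjugate eigenspaces). So it suffices to show that $L_{\mathbb{C}}$ has a single eigenvalue $\lambda \in \mathbb{C}$; then either $\lambda \in \mathbb{R}$, giving case i), or $\lambda \notin \mathbb{R}$, and then $\overline{\lambda}$ is also an eigenvalue and these are the only two, giving case ii).

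Next I would argue that $L_{\mathbb{C}}$ has exactly one eigenvalue. Pick any eigenvalue $\lambda$ of $L_{\mathbb{C}}$ and consider $M := L - \lambda\, {\rm id}_W$ in the real case if $\lambda$ is real, or work directly over $\mathbb{C}$ otherwise. The cleanest route is to stay real: if $\lambda \in \mathbb{R}$, then $M = L - \lambda\,{\rm id}_W \in {\rm End}(W)$ since scalar multiples of the identity commute with every $A_{\sigma_j}$; by Proposition \ref{oneev}, $M$ is either invertible or nilpotent, and since $\lambda$ is an eigenvalue $M$ is not invertible, hence nilpotent, hence $\lambda$ is the only eigenvalue of $L$ and we are in case i). If instead $\lambda = \alpha + i\beta$ with $\beta \neq 0$, then the real operator $N := (L - \alpha\,{\rm id}_W)^2 + \beta^2\,{\rm id}_W \in {\rm End}(W)$ annihilates the real generalized eigenspace attached to the pair $\{\lambda, \overline{\lambda}\}$, so $N$ is not invertible; by Proposition \ref{oneev} it is nilpotent, which forces the only eigenvalues of $L$ to be $\lambda$ and $\overline{\lambda}$, i.e. case ii).

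The one point that needs a word of care is that $M$, $N$ genuinely lie in ${\rm End}(W)$, i.e. that they are homomorphisms of the representation: this is immediate because ${\rm id}_W$ commutes with each $A_{\sigma_j}$ and ${\rm End}(W)$ is closed under real-linear combinations and composition, so any real polynomial in $L$ is again an endomorphism. The only genuine subtlety — and the one I expect to be the main (minor) obstacle — is bookkeeping around the complex case: one must check that $N = (L-\alpha)^2 + \beta^2$ really is singular, which follows because over $\mathbb{C}$ it factors as $(L_{\mathbb{C}} - \lambda)(L_{\mathbb{C}} - \overline{\lambda})$ and $L_{\mathbb{C}} - \lambda$ is singular. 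With that in hand the corollary follows directly, and no further case analysis is required.
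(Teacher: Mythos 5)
Your proof is correct and follows essentially the same route as the paper: in both cases one applies Proposition \ref{oneev} to the real polynomial $L-\lambda I$ (real eigenvalue) or $(L-\alpha I)^2+\beta^2 I=(L-\lambda I)(L-\overline\lambda I)$ (complex pair), which is singular and hence nilpotent. Your extra remarks on why these operators lie in ${\rm End}(W)$ and why the quadratic factor is singular are exactly the points the paper leaves implicit.
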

\begin{proof}
First, assume that $\lambda$ is a real eigenvalue of $L$. Then $L-\lambda I$ is not invertible and hence nilpotent, i.e. $(L-\lambda I)^n=0$ for some $n$. It follows that every element of $W$ is a generalized eigenvector for the eigenvalue $\lambda$.

The argument is similar in case that $\lambda=\alpha\pm i\beta\notin \R$ is a complex conjugate pair of eigenvalues of $L$. Then $(L-\lambda I)(L-\overline \lambda I) = L^2-2\alpha L +(\alpha^2+\beta^2)I$ is not invertible and thus nilpotent.
\end{proof}
Schur's lemma gives a more precise characterization of ${\rm End}(W)$. We will formulate this characterization as Lemma \ref{schur} below. It follows from a few preparatory results. The first says that the endomorphisms of an indecomposable representation form a ``local ring''.
\begin{proposition}\label{sumnilpotent}
Let $W$ be an indecomposable representation and assume that $L_1, L_2\in {\rm End}(W)$ are both nilpotent. Then also $L_1+L_2$ is nilpotent.
\end{proposition}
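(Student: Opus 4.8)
The plan is to deduce this from Proposition \ref{oneev}, which asserts that on an indecomposable representation every endomorphism is either invertible or nilpotent; equivalently, the non-invertible endomorphisms of $W$ are precisely the nilpotent ones. What we want then amounts to the statement that this set of non-invertible endomorphisms is closed under addition (so that ${\rm End}(W)$ is a local ring), and the classical route to this is a proof by contradiction exploiting a terminating Neumann series.

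First I would dispose of the trivial case $W = 0$, where $L_1 = L_2 = L_1 + L_2 = 0$ is nilpotent; so assume $W \neq 0$, in which case no nonzero-order nilpotent endomorphism can be invertible. Now suppose, for contradiction, that $L_1 + L_2$ is not nilpotent. By Proposition \ref{oneev} it is then invertible; set $M := (L_1 + L_2)^{-1}$, and observe that $M$ is again an endomorphism of $W$, since the inverse of an equivariant invertible linear map is equivariant. Hence $M L_1$ and $M L_2 = I - M L_1$ also lie in ${\rm End}(W)$, and I can apply Proposition \ref{oneev} once more, this time to $M L_1$.

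There are two cases. If $M L_1$ is invertible, then $L_1 = M^{-1}(M L_1)$ is a composition of invertible maps, hence invertible, contradicting that $L_1$ is nilpotent on $W \neq 0$. If $M L_1$ is nilpotent, say $(M L_1)^k = 0$, then $M L_2 = I - M L_1$ is invertible with inverse $\sum_{\ell = 0}^{k-1} (M L_1)^\ell$, so that $L_2 = M^{-1}(M L_2)$ is invertible, again contradicting nilpotency of $L_2$. Either way we reach a contradiction, so $L_1 + L_2$ must be nilpotent.

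I do not expect a real obstacle here; the only points demanding a little care are the observation that the inverse $M$ of the equivariant map $L_1 + L_2$ is itself an endomorphism — which is exactly what allows $M L_1$ to be fed back into Proposition \ref{oneev} — together with the elementary fact that $I$ minus a nilpotent operator is invertible via a finite geometric series.
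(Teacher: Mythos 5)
Your proof is correct and follows essentially the same route as the paper's: assume $L_1+L_2$ is not nilpotent, hence invertible by Proposition \ref{oneev}, note that its inverse and the products $(L_1+L_2)^{-1}L_1$, $(L_1+L_2)^{-1}L_2$ are again endomorphisms, and derive a contradiction from the identity $(L_1+L_2)^{-1}L_1 = I - (L_1+L_2)^{-1}L_2$ together with the fact that a nilpotent factor precludes invertibility while $I$ minus a nilpotent is invertible. Your case split on $ML_1$ is just a slightly more explicit packaging of the paper's observation that neither product can be invertible, so both must be nilpotent.
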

\begin{proof}
Assume that $L_1+L_2$ is not nilpotent. Then it is invertible by Proposition \ref{oneev}. Multiplying 
$$(L_1+L_2)\circ A_{\sigma_j} = A_{\sigma_j}\circ (L_1+L_2)$$ left and right by $(L_1+L_2)^{-1}$ gives that $(L_1+L_2)^{-1}\in {\rm End}(W)$. Hence, so are $$M_1:= (L_1+L_2)^{-1}L_1\ \mbox{and} \ M_2:=(L_1+L_2)^{-1}L_2\, .$$
Clearly, $M_1$ and $M_2$ can not be invertible, because they contain a nilpotent factor. So they are both nilpotent. In particular, $I-M_2$ is invertible. But $M_1=I-M_2$. This is a contradiction.
\end{proof}
\begin{corollary}\label{nilpotentideal}
Let $W$ be an indecomposable representation. Then the collection $${\rm End}^{\rm Nil}(W)=\{L\in {\rm End}(W)\, |\, L \ \mbox{\rm is nilpotent}\, \! \}$$ is an ideal in ${\rm End}(W)$.
\end{corollary}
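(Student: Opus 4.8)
The plan is to verify directly the two defining properties of an ideal in the (in general noncommutative) ring ${\rm End}(W)$: closure under addition, and absorption of multiplication by arbitrary endomorphisms from either side. The additive part is already in hand. Indeed, if $L_1, L_2 \in {\rm End}^{\rm Nil}(W)$ then $L_1 + L_2$ is nilpotent by Proposition \ref{sumnilpotent}; moreover $0$ is nilpotent and $-L$ is nilpotent whenever $L$ is, so ${\rm End}^{\rm Nil}(W)$ is an additive subgroup of ${\rm End}(W)$.

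For the absorption property I would argue as follows. Take $L \in {\rm End}^{\rm Nil}(W)$ and $M \in {\rm End}(W)$. Since ${\rm End}(W)$ is closed under composition, both $M \circ L$ and $L \circ M$ again lie in ${\rm End}(W)$, so by Proposition \ref{oneev} each of them is either invertible or nilpotent. The key observation is that $L$ itself is not invertible: when $W = 0$ the statement is vacuous (then ${\rm End}^{\rm Nil}(W) = {\rm End}(W) = 0$), and when $W \neq 0$ a nilpotent operator has nontrivial kernel, hence zero determinant. Therefore a composition involving $L$ as a factor also has zero determinant and cannot be invertible. Applying Proposition \ref{oneev} once more, $M \circ L$ and $L \circ M$ are both nilpotent, i.e. they belong to ${\rm End}^{\rm Nil}(W)$. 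Combining this two-sided absorption with the additive-subgroup property yields exactly that ${\rm End}^{\rm Nil}(W)$ is a two-sided ideal in ${\rm End}(W)$.

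I do not expect a genuine obstacle here, since the substance has been front-loaded into Propositions \ref{oneev} and \ref{sumnilpotent}; the corollary is essentially a bookkeeping exercise assembling those two facts into the ring-theoretic language of ideals. The only place calling for a line of care is the ``$L$ is not invertible'' step, where one must explicitly set aside the degenerate case $W = 0$ and, for $W \neq 0$, note that a nilpotent endomorphism is a non-unit of ${\rm End}(W)$ so that any composition with it is again a non-unit.
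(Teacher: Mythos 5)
Your argument is correct and is exactly the route the paper intends: its proof of this corollary is literally ``Obvious from Proposition \ref{oneev} and Proposition \ref{sumnilpotent}'', and you have simply written out the details (additive closure from Proposition \ref{sumnilpotent}, absorption because a composition containing a nilpotent factor is a non-unit and hence nilpotent by Proposition \ref{oneev}). The care you take with the determinant/non-unit step and the degenerate case $W=0$ is sound, so nothing further is needed.
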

\begin{proof}
Obvious from Proposition \ref{oneev} and Proposition \ref{sumnilpotent}.
\end{proof}
We can now formulate the following refinement of Proposition \ref{oneev}:
\begin{lemma}[Schur's Lemma]\label{schur}
Let $W$ be an indecomposable representation. The quotient
$${\rm End}(W)/{\rm End}^{\rm Nil}(W) \ \mbox{is a division algebra}\, .$$
\end{lemma}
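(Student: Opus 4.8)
The plan is to show that every nonzero element of the quotient $R := {\rm End}(W)/{\rm End}^{\rm Nil}(W)$ has a multiplicative inverse; associativity and the ring axioms are inherited from ${\rm End}(W)$, and by Corollary \ref{nilpotentideal} the quotient is a well-defined associative (unital) algebra. So the content is purely the existence of inverses modulo nilpotents.

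First I would take a class $[L] \in R$ with $[L] \neq 0$, i.e. $L \in {\rm End}(W)$ with $L$ not nilpotent. By Proposition \ref{oneev}, a non-nilpotent endomorphism of an indecomposable representation must be invertible, so $L$ is an invertible linear map. The next step is the key point: I must check that $L^{-1}$ again lies in ${\rm End}(W)$, i.e. that $L^{-1}$ intertwines the representation. This is the same trick used in the proof of Proposition \ref{sumnilpotent}: starting from $L\circ A_{\sigma_j} = A_{\sigma_j}\circ L$ for all $j$ and multiplying on the left and right by $L^{-1}$ gives $A_{\sigma_j}\circ L^{-1} = L^{-1}\circ A_{\sigma_j}$, so indeed $L^{-1}\in {\rm End}(W)$.

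Then $[L^{-1}]\cdot[L] = [L^{-1}L] = [{\rm id}_W]$ and similarly $[L]\cdot[L^{-1}] = [{\rm id}_W]$, so $[L]$ is invertible in $R$ with inverse $[L^{-1}]$. It remains only to observe that ${\rm id}_W$ is not nilpotent (assuming $W\neq 0$; the zero representation is a trivial degenerate case one may exclude or treat separately), so $[{\rm id}_W] \neq 0$ and $R$ is not the zero ring. Hence every nonzero element of $R$ is a unit, which is exactly the statement that $R$ is a division algebra.

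I do not anticipate a serious obstacle here: the only thing that could go wrong is the passage from ``$L$ invertible as a linear map'' to ``$L$ invertible in ${\rm End}(W)$'', and that is handled by the left-and-right multiplication argument above, which only uses that each $A_{\sigma_j}$ is a fixed linear map. One small point worth stating carefully is well-definedness of the claim ``$[L]\neq 0 \iff L$ not nilpotent'': this is precisely the content of Corollary \ref{nilpotentideal} together with Proposition \ref{oneev}, since ${\rm End}^{\rm Nil}(W)$ is exactly the set of non-invertible endomorphisms and it is an ideal, so the nonzero classes in $R$ are represented exactly by the invertible endomorphisms.
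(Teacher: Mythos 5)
Your proof is correct and follows essentially the same route as the paper, which simply cites Corollary \ref{nilpotentideal} for well-definedness of the quotient and Proposition \ref{oneev} for the fact that a nonzero class is invertible. The only difference is that you explicitly verify that $L^{-1}$ intertwines the $A_{\sigma_j}$ (by left- and right-multiplying the equivariance relation by $L^{-1}$), a step the paper leaves implicit but which is the same trick already used in its proof of Proposition \ref{sumnilpotent}.
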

\begin{proof}
By Corollary \ref{nilpotentideal}, the quotient ring ${\rm End}(W)/{\rm End}^{\rm Nil}(W)$ is well-defined. By Proposition \ref{oneev}, an element of this quotient is invertible if and only if it is nonzero. Thus, the quotient is a division algebra.
\end{proof}
We recall that any finite dimensional real associative division algebra is isomorphic to either
$$\R, \C\ \mbox{or}\ \H\, .$$
In particular, this implies that ${\rm End}(W)/{\rm End}^{\rm Nil}(W)$ can only have dimension $1$, $2$ or $4$ if $W$ is indecomposable. We also note that one can represent the equivalence class $[L]\in {\rm End}(W)/{\rm End}^{\rm Nil}(W)$ of an endomorphism $L\in {\rm End}(W)$ by the semisimple part $L^S$ of $L$.  It holds that $L^S\in {\rm End}(W)$, because $L^S$ is a polynomial expression in $L$, and thus Lemma \ref{schur} can also be thought of as a restriction on the semisimple parts of the endomorphisms of an indecomposable representation.

In case ${\rm End}(W)/{\rm End}^{\rm Nil}(W)\cong \R$, we call $W$ a representation of {\it real type} or an {\it absolutely indecomposable} representation. This terminology is due to the fact that even the complexification of $W$ can not be decomposed further. Otherwise we call $W$ an {\it absolutely decomposable} representation, respectively of {\it complex type} or of {\it quaternionic type}.

Finally, let us for completeness prove the Krull-Schmidt theorem here.
\begin{theorem}[Krull-Schmidt] The decomposition
$$W=W_1\oplus \ldots \oplus W_m$$ 
of $W$ into indecomposable representations is unique up to isomorphism.
\end{theorem}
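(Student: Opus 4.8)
The plan is to prove the Krull--Schmidt theorem by the standard \emph{exchange argument}, using the results about endomorphisms of indecomposable representations that were just established (Proposition \ref{oneev}, Proposition \ref{sumnilpotent}, and especially the ideal property of nilpotent endomorphisms, Corollary \ref{nilpotentideal}). Suppose $W = W_1 \oplus \ldots \oplus W_m = W_1' \oplus \ldots \oplus W_{m'}'$ are two decompositions into indecomposable subrepresentations. Let $\iota_k : W_k \to W$ and $p_k : W \to W_k$ be the inclusion and projection associated with the first decomposition, and $\iota_\ell'$, $p_\ell'$ those for the second; these are all homomorphisms of representations, since both decompositions are $\Sigma$-stable. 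The key identity is $\mathrm{id}_{W_1} = p_1 \circ \iota_1 = \sum_{\ell=1}^{m'} (p_1 \circ \iota_\ell') \circ (p_\ell' \circ \iota_1)$, an equation in $\mathrm{End}(W_1)$. Since $\mathrm{id}_{W_1}$ is invertible, hence not nilpotent, and $\mathrm{End}^{\mathrm{Nil}}(W_1)$ is an ideal by Corollary \ref{nilpotentideal}, at least one summand $(p_1 \circ \iota_\ell') \circ (p_\ell' \circ \iota_1)$ must fail to be nilpotent; by Proposition \ref{oneev} it is invertible. After reindexing we may take $\ell = 1$, so $(p_1\circ \iota_1')\circ(p_1' \circ \iota_1)$ is an automorphism of $W_1$.

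Now I would invoke Proposition \ref{twoisos}: since $W_1$ and $W_1'$ are indecomposable, $p_1' \circ \iota_1 \in \mathrm{Hom}(W_1, W_1')$ and $p_1 \circ \iota_1' \in \mathrm{Hom}(W_1', W_1)$ compose to an isomorphism, so both are isomorphisms; in particular $W_1 \cong W_1'$. The next step is the genuine exchange: I would show $W = W_1' \oplus W_2 \oplus \ldots \oplus W_m$, i.e. that $W_1'$ can replace $W_1$ in the first decomposition. For this, let $\psi := \iota_1 \circ (p_1' \circ \iota_1)^{-1} \circ p_1' : W \to W$. One checks $\psi$ is a homomorphism, $\psi|_{W_1'} $ maps onto $W_1$ isomorphically, and $\psi$ restricted to $W_2 \oplus \ldots \oplus W_m$ together with the complementary projection gives the desired direct sum decomposition; concretely, the map $W_1' \oplus (W_2\oplus\cdots\oplus W_m) \to W$ built from $\iota_1'$ and the inclusions is an isomorphism because its composite with $p_1$ restricts to an isomorphism on $W_1'$ and vanishes appropriately — this is a short diagram chase. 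Quotienting out $W_1 \cong W_1'$ (equivalently, passing to $W / W_1' \cong W_2 \oplus \ldots \oplus W_m$, which inherits both the decomposition $W_2\oplus\cdots\oplus W_m$ and a decomposition into images of $W_2', \ldots, W_{m'}'$), I would then induct on $m$. The base case $m = 0$ (or $m=1$) is immediate, and the induction yields $m = m'$ together with a permutation $\tau$ and isomorphisms $W_k \cong W_{\tau(k)}'$.

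The main obstacle, and the step deserving the most care, is the exchange step — verifying that $\psi$ (or whichever explicit splitting one writes down) really does produce the direct sum $W = W_1' \oplus W_2 \oplus \ldots \oplus W_m$, rather than merely $W_1 \cong W_1'$. The subtlety is that we are working with representations of a mere semigroup, where the $A_{\sigma_j}$ need not be invertible, so one cannot appeal to averaging or to any inner-product/unitary structure; everything must be done with the abstract module-theoretic maneuver, and one must be scrupulous that every auxiliary map ($\psi$, the various projections and inclusions, their composites) is genuinely a homomorphism of $\Sigma$-representations so that Propositions \ref{oneev}, \ref{twoisos} and Corollary \ref{nilpotentideal} apply. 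Once the exchange is correctly established, the induction is routine, and the fact that every representation decomposes into indecomposables at all is immediate from finite-dimensionality (induction on $\dim W$), which I would note at the outset for completeness.
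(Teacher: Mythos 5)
Your proposal is correct and follows essentially the same route as the paper: the identity $\mathrm{id}_{W_j}=\sum_k (p_j\circ i_k')\circ(p_k'\circ i_j)$, the local-ring property of $\mathrm{End}$ of an indecomposable (Propositions \ref{oneev} and \ref{sumnilpotent}), and Proposition \ref{twoisos} to conclude $W_1\cong W_1'$. The only difference is bookkeeping: the paper phrases the argument for two isomorphic representations linked by an isomorphism $h$ and dismisses the induction as easy, whereas you spell out the exchange step explicitly --- a worthwhile addition, and your sketch of it is sound.
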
 
\begin{proof}
We will prove a slightly more general fact. Let us assume that $W$ and $W'$ are two isomorphic representations of $\Sigma$. This means that there is an isomorphism of representations $h:W\to W'$. Let us assume furthermore that 
$$W=W_1\oplus \ldots \oplus W_m \ \mbox{and}\ W'= W_1'\oplus \ldots \oplus W_{m'}'$$
are decompositions of $W$ and $W'$ into indecomposable representations. We claim that $m=m'$ and that it holds after renumbering the factors  that $W_j$ is isomorphic to $W_j'$ for every $1\leq j\leq m$. The theorem then follows by choosing $W=W'$.

To prove our claim, let 
$$i_{j}:W_j\to W, p_j:W\to W_j, i_{k}':W_k'\to W'\ \mbox{and}\ p_k':W'\to W_k'$$ 
be the embeddings and projections associated to the above decompositions. These maps are homomorphisms of the corresponding representations. 
Indeed, because $A_{\sigma_i}(W_l)\subset W_l$,
\begin{align}\nonumber
&p_{j}\circ A_{\sigma_i} = p_j\circ A_{\sigma_i} \circ( i_1\circ p_1+\ldots +i_n\circ p_m ) = (p_j\circ A_{\sigma_i}\circ i_j) \circ p_j \, , \\ \nonumber
& A_{\sigma_i}\circ i_j = (i_1\circ p_1+\ldots +i_n\circ p_m) \circ (A_{\sigma_i}\circ i_j) =  i_j\circ (p_j \circ A_{\sigma_i}\circ i_j)\, ,
\end{align}
and similarly for $i_k'$ and $p_k'$.

It now holds for $j=1, \ldots, m$ that 
$$\sum_{k=1}^{m'} (p_j \circ h^{-1} \circ i_k' \circ p_k' \circ h \circ i_j) = p_j \circ h^{-1} \left(\sum_{k=1}^{m'}  i_k' \circ p_k' \right)\circ h \circ i_j = p_j\circ i_j= I_{W_j} \, .$$
Because $I_{W_j}$ is invertible, Proposition \ref{sumnilpotent} implies that there is at least one $1\leq k\leq m'$ so that $p_j \circ h^{-1}\circ i_k' \circ p_k'\circ h \circ i_j$ is an isomorphism. The latter is the composition of the homomorphisms
$$p_k'\circ h \circ i_j:W_j\to W_{k}'\ \mbox{and} \  p_j \circ h^{-1}\circ i_k': W_k'\to W_j \, .$$ 
Because $W_j$ and $W_k'$ are indecomposable, it follows from Proposition \ref{twoisos} above that both these maps are isomorphisms, i.e. $W_j$ is isomorphic to $W_k'$. 
The theorem now follows easily by induction. 
\end{proof}

\section{Equivariant Lyapunov-Schmidt reduction}\label{seclyapunovschmidt}
We say that a parameter dependent differential equation 
$$\dot X = \Gamma(X;\lambda)\ \mbox{for}\ X\in W\ \mbox{and}\ \lambda\in \R^p$$
undergoes a steady state bifurcation at $(X_0; \lambda_0)$ if $\Gamma(X_0; \lambda_0)=0$ and the linearization
$$L_0:=D_X\Gamma(X_0;\lambda_0): W\to W$$ 
is not invertible. When this happens, the collection of steady states of $\Gamma$ close to $X_0$ may topologically change as $\lambda$ varies near $\lambda_0$. To study such a bifurcation in detail, it is customary to use the method of Lyapunov-Schmidt reduction, cf. \cite{duisbif}, \cite{constrainedLS}, \cite{golschaef1}, \cite{golschaef2}. We will now describe a variant of this method that applies in case that $\Gamma$ is equivariant under the action of a semigroup. This section serves as a preparation for Sections \ref{secgeneric} and \ref{sectwoorthree} below.

As a start, let us denote by $$L_0=L^S_0+L^N_0$$ the decomposition of $L_0$ in semisimple and nilpotent part. We shall split $W$ as a direct sum
$$W= {\rm im}\, L^S_0 \oplus \ker L^S_0\, .$$
The projections that correspond to this splitting shall be denoted
$$P_{\rm im}:W\to {\rm im}\, L_0^S\ \mbox{and}\ P_{\ker}: W\to \ker L_0^S\, .$$
One can now decompose every element $X\in W$ as $X=X_{\rm im}+X_{\ker}$ with $X_{\rm im}=P_{\rm im}(X)\in {\rm im}\, L_0^S$ and $X_{\rm ker}=P_{\ker}(X)\in \ker L_0^S$. We also observe that $\Gamma(X;\lambda)=0$ if and only if 
$$
\Gamma_{\rm im}(X;\lambda):=P_{\rm im}(\Gamma(X;\lambda)) =0 \ \mbox{and}\ \Gamma_{\rm ker}(X;\lambda):=P_{\rm ker}(\Gamma(X;\lambda)) =0\, .$$
The idea of Lyapunov-Schmidt reduction is to solve these equations consecutively. Thus, one first considers the equation
$$
\Gamma_{\rm im}(X_{\rm im}+X_{\ker};\lambda) =0 \ \mbox{for}\ \Gamma_{\rm im}:{\rm im}\, L_0^S \oplus \ker L_0^S \times \R^p \to {\rm im}\, L_0^S\, .$$ 
By construction, the derivative of $\Gamma_{\rm im}$ in the direction of ${\rm im}\, L_0^S$ is given by
$$D_{X_{\rm im}}\Gamma_{\rm im}(X_0;\lambda_0) = P_{\rm im}\circ D_{X_{\rm im}}\Gamma(X_0;\lambda_0) = P_{\rm im} \circ \left(\left. L_0\right|_{{\rm im}\, L_0^S}\right)\, .$$
This derivative is clearly invertible. As a consequence, by the implicit function theorem there exists a smooth function $X_{\rm im}=X_{\rm im}(X_{\ker}, \lambda)$, defined for $X_{\ker}$ near $(X_0)_{\ker}$ and $\lambda$ near $\lambda_0$ so that $X_{\rm im}(X_{\ker}, \lambda)$ is the unique solution near $(X_{0})_{\rm im}$ of the equation
$$\Gamma_{\rm im}(X_{\rm im}(X_{\ker}, \lambda)+X_{\ker}; \lambda)=0\, .$$
Hence, what remains is to solve the bifurcation equation 
$$r(X_{\ker}; \lambda):= \Gamma_{\ker}(X_{\rm im}(X_{\ker}, \lambda)+X_{\ker}; \lambda)=0\ \mbox{with}\ r: \ker L_0^S\times\Lambda \to \ker L_0^S$$
and $\Lambda\subset  \R^p$ an open neighborhood of $\lambda_0$.

The process described here, of reducing the equation $\Gamma(X;\lambda)=0$ for $X\in W$ to the lower-dimensional equation $r(X_{\ker}; \lambda)=0$ for $X_{\ker}\in \ker L_0^S$ is called {\it Lyapunov-Schmidt reduction}. The following lemma contains the simple but important observation that at a symmetric equilibrium, the reduced equation inherits the symmetries of the original equation. The proof is standard:
\begin{lemma}[Equivariant Lyapunov-Schmidt reduction]
Assume that $W$ is a representation of a semigroup $\Sigma$ and that $\Gamma: W\times \R^p \to W$ is $\Sigma$-equivariant, i.e. that
$$\Gamma(A_{\sigma_j}(X);\lambda)=A_{\sigma_j}(\Gamma(X;\lambda))\ \mbox{for all}\ \sigma_j\in \Sigma\, .$$
Assume furthermore that $X_0\in W$ is a symmetric equilibrium at the parameter value $\lambda_0$, i.e. that $\Gamma(X_0; \lambda_0)=0$ and $A_{\sigma_j}(X_0)=X_0$ for all $1\leq j\leq n$. 

Then also the reduced vector field $r: \ker L_0^S\times \Lambda \to \ker L_0^S$ is $\Sigma$-equivariant:
$$r(A_{\sigma_j}(X_{\ker}); \lambda) = A_{\sigma_j}(r(X_{\ker}; \lambda))\ \mbox{for all}\ \sigma_j\in \Sigma\, ,$$
where we denoted by $A_{\sigma_j}: \ker L_0^S\to \ker L_0^S$ the restriction of $A_{\sigma_j}$ to $\ker L_0^S$.
\end{lemma}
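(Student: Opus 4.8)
The plan is to carry the semigroup symmetry through the reduction step by step, invoking the uniqueness clause of the implicit function theorem only at the very end. First I would note that the linearization $L_0=D_X\Gamma(X_0;\lambda_0)$ commutes with every $A_{\sigma_j}$: this is exactly Remark \ref{linearization} applied to the vector field $X\mapsto\Gamma(X;\lambda_0)$. Since the semisimple part $L_0^S$ is a polynomial in $L_0$, it too commutes with every $A_{\sigma_j}$, so both ${\rm im}\, L_0^S$ and $\ker L_0^S$ are subrepresentations of $W$. A linear map preserving both summands of a direct sum commutes with the two associated projections, hence $P_{\rm im}\circ A_{\sigma_j}=A_{\sigma_j}\circ P_{\rm im}$ and $P_{\ker}\circ A_{\sigma_j}=A_{\sigma_j}\circ P_{\ker}$; in particular $A_{\sigma_j}$ restricts to well-defined maps on ${\rm im}\, L_0^S$ and on $\ker L_0^S$, which is the meaning of the $A_{\sigma_j}$ appearing in the statement.

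From this I would deduce that $\Gamma_{\rm im}$ and $\Gamma_{\ker}$ are again $\Sigma$-equivariant: for all $X$ and all $j$,
\[
\Gamma_{\rm im}(A_{\sigma_j}X;\lambda)=P_{\rm im}\Gamma(A_{\sigma_j}X;\lambda)=P_{\rm im}A_{\sigma_j}\Gamma(X;\lambda)=A_{\sigma_j}P_{\rm im}\Gamma(X;\lambda)=A_{\sigma_j}\Gamma_{\rm im}(X;\lambda),
\]
and likewise for $\Gamma_{\ker}$. Note also that $A_{\sigma_j}(X_0)=X_0$ together with the commutation relations just found gives $A_{\sigma_j}(X_0)_{\rm im}=(X_0)_{\rm im}$ and $A_{\sigma_j}(X_0)_{\ker}=(X_0)_{\ker}$, so $A_{\sigma_j}$ fixes the base point of the implicit function theorem in each factor.

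The next and crucial step is to show that the implicitly defined map $X_{\rm im}=X_{\rm im}(X_{\ker},\lambda)$ is equivariant in its first argument, i.e. $X_{\rm im}(A_{\sigma_j}X_{\ker},\lambda)=A_{\sigma_j}X_{\rm im}(X_{\ker},\lambda)$. I would apply $A_{\sigma_j}$ to the defining identity $\Gamma_{\rm im}(X_{\rm im}(X_{\ker},\lambda)+X_{\ker};\lambda)=0$ and use the equivariance of $\Gamma_{\rm im}$ together with the fact that $A_{\sigma_j}$ respects the splitting $W={\rm im}\, L_0^S\oplus\ker L_0^S$; this shows that $A_{\sigma_j}X_{\rm im}(X_{\ker},\lambda)$ solves $\Gamma_{\rm im}(\,\cdot\,+A_{\sigma_j}X_{\ker};\lambda)=0$. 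Because $A_{\sigma_j}$ is continuous and fixes $(X_0)_{\rm im}$ and $(X_0)_{\ker}$, shrinking the neighborhoods of $(X_0)_{\ker}$ and of $\lambda_0$ ensures that $A_{\sigma_j}X_{\rm im}(X_{\ker},\lambda)$ stays in the region where the implicit function theorem asserts uniqueness; uniqueness then forces the desired identity, and since there are only finitely many $\sigma_j$ a single common neighborhood works for all of them. This is the step I expect to require the most care: the rest is formal, but one must genuinely check that the translated solution $A_{\sigma_j}X_{\rm im}(X_{\ker},\lambda)$ does not escape the domain of uniqueness.

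Finally, assembling the pieces,
\[
r(A_{\sigma_j}X_{\ker};\lambda)=\Gamma_{\ker}\big(X_{\rm im}(A_{\sigma_j}X_{\ker},\lambda)+A_{\sigma_j}X_{\ker};\lambda\big)=\Gamma_{\ker}\big(A_{\sigma_j}(X_{\rm im}(X_{\ker},\lambda)+X_{\ker});\lambda\big)=A_{\sigma_j}\,r(X_{\ker};\lambda),
\]
which is exactly the asserted equivariance of the reduced vector field.
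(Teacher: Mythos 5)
Your proposal is correct and follows essentially the same route as the paper's proof: show that $L_0$ and hence $L_0^S$ lie in ${\rm End}(W)$, deduce that the projections and therefore $\Gamma_{\rm im}$, $\Gamma_{\ker}$ are equivariant, and then use the uniqueness clause of the implicit function theorem to transfer equivariance to $X_{\rm im}$ and finally to $r$. Your extra attention to shrinking the neighborhoods so that $A_{\sigma_j}X_{\rm im}(X_{\ker},\lambda)$ remains in the domain of uniqueness is a small point the paper leaves implicit, but it is the same argument.
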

\begin{proof}
First of all,  because $\Gamma$ is $\Sigma$-equivariant and $X_0$ is $\Sigma$-symmetric, we know from Remark \ref{linearization} that
$$L_0\in {\rm End}(W)\, .$$ Moreover, $L^S_0\in {\rm End}(W)$ as well, because $L_0^S=p(L_0)=a_0I+a_1L+\ldots+a_{r-1}L^{r-1}+a_r L^r$ for certain $a_0, \ldots, a_r\in \C$. As a consequence, $\ker L_0^S$ and ${\rm im}\, L_0^S$ are subrepresentations of $W$ and the projections $P_{\ker}: W\to \ker L_0^S$ and $P_{\rm im}: W\to{\rm im}\, L_0^S$ are homomorphisms.



Recall that $X_{\rm im}(X_{\ker}, \lambda)$ is the unique solution to the equation $\Gamma_{\rm im}(X_{\rm im}+X_{\ker}; \lambda)=0$. In particular it holds that $\Gamma_{\rm im}(X_{\rm im}(A_{\sigma_j}(X_{\ker}), \lambda) + A_{\sigma_j}(X_{\ker}); \lambda)=0$. But also  
\begin{align}
\Gamma_{\rm im}(A_{\sigma_j}(X_{\rm im}(X_{\ker}, \lambda)) + A_{\sigma_j}(X_{\ker}); \lambda) & =\Gamma_{\rm im}(A_{\sigma_j}(X_{\rm im}(X_{\ker}, \lambda)+X_{\ker});\lambda)  \nonumber \\ \nonumber = A_{\sigma_j}(\Gamma_{\rm im}(X_{\rm im}(X_{\ker}, &\lambda)+X_{\ker}; \lambda)) = 0\, .
\end{align}
Here, the second equality holds because $\Gamma_{\rm im}=P_{\rm im}\circ \Gamma$ is the composition of $\Sigma$-equivariant maps. By uniqueness of $X_{\rm im}(A_{\sigma_j}(X_{\ker}), \lambda)$, this proves that
$$A_{\sigma_j}(X_{\rm im}(X_{\ker}, \lambda))=X_{\rm im}(A_{\sigma_j}(X_{\rm ker}), \lambda)\, .$$
In other words, the map $X_{\rm im}: \ker L_0^S \times \Lambda \to {\rm im}\, L_0^S$ is $\Sigma$-equivariant.

It now follows easily that $r$ is $\Sigma$-equivariant:
\begin{align}
&r(A_{\sigma_j}(X_{\ker}); \lambda) = \Gamma_{\ker}(X_{\rm im}(A_{\sigma_j}(X_{\ker}), \lambda)+A_{\sigma_{j}}(X_{\ker}); \lambda) = \nonumber \\ \nonumber 
\Gamma_{\ker}(A_{\sigma_j}(&X_{\rm im}(X_{\ker}, \lambda)+X_{\ker}); \lambda) =  A_{\sigma_j}(\Gamma_{\ker}(X_{\rm im}(X_{\ker}, \lambda);\lambda) = A_{\sigma_j}(r(X_{\ker}; \lambda))\, .
\end{align}
Here, the second equality holds because $X_{\rm im}$ is $\Sigma$-equivariant and the third because $\Gamma_{\rm ker}=P_{\ker}\circ \Gamma$ is the composition of $\Sigma$-equivariant maps.
\end{proof}
\begin{remark}
The above construction is only slightly unusual. Indeed, in bifurcation theory, it is perhaps more common to reduce the steady state equation $\Gamma(X;\lambda)=0$ to a bifurcation equation on the kernel $\ker L_0$ of $L_0=D_X\Gamma(X_0; \lambda_0)$. This kernel is a subrepresentation of $W$, but the problem is that it may not be complemented by another subrepresentation. As a result, the reduced equation $r(X_{\ker}; \lambda)=0$ need not be equivariant under this construction. This explains our choice to reduce to an equation on the ``generalized kernel'' $\ker L_0^S$: this kernel is nicely complemented by the subrepresentation ${\rm im}\, L_0^S$, the ``reduced image'' of $L_0$.
\end{remark}

\section{Generic steady state bifurcations}\label{secgeneric}
In this section, we investigate the structure of a generic semigroup equivariant steady state bifurcation in more detail. To this end, assume that $\lambda_0 < \lambda_1$ are real numbers and that 
$$L:(\lambda_0, \lambda_1)\to {\rm End}(W)$$
is a continuously differentiable one-parameter family of endomorphisms. The collection of such curves of endomorphisms is given the $C^1$-topology. 
\begin{definition}
We say that a one-parameter family $\lambda\mapsto L(\lambda)$ is {\it in general position} if for each $\lambda\in (\lambda_0, \lambda_1)$ either $L(\lambda)$ is invertible or the generalized kernel of $L(\lambda)$ is absolutely indecomposable (i.e. of real type).
\end{definition}
We would like to prove that a ``generic'' one-parameter curve of endomorphisms is in general position, because this would imply that steady state bifurcations generically occur along precisely one absolutely indecomposable representation. Instead, we will only prove this result under a  special condition on the representation. It is currently unclear to us if the result is true in more generality. The main result of this section is the following:

\begin{theorem}\label{genericcodimone}
Assume that the representation $W$ of a semigroup splits as a sum of mutually non-isomorphic indecomposable representations. Then the collection of  one-parameter families of endomorphisms in general position is open and dense in the $C^1$-topology.
\end{theorem}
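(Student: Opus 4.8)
Since $W$ splits as a direct sum $W = W_1 \oplus \cdots \oplus W_m$ of mutually non-isomorphic indecomposable representations, the plan is first to understand $\mathrm{End}(W)$ explicitly. By Schur's Lemma (Lemma \ref{schur}), each $\mathrm{End}(W_i)/\mathrm{End}^{\rm Nil}(W_i)$ is a division algebra $\mathbb{D}_i \in \{\R, \C, \H\}$. Because the $W_i$ are pairwise non-isomorphic, Proposition \ref{twoisos} forces $\mathrm{Hom}(W_i, W_j)$ to consist only of non-invertible maps for $i \neq j$; in fact one checks that any $L \in \mathrm{Hom}(W_i,W_j)$ with $i\neq j$ must be ``nilpotent'' in the sense that, together with the block structure, the off-diagonal blocks contribute nothing to semisimple parts. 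The upshot I want to extract is: for $L \in \mathrm{End}(W)$, writing $L = (L_{ij})$ in block form, the semisimple part $L^S$ is block-diagonal, $L^S = \mathrm{diag}(L_1^S, \ldots, L_m^S)$, and $\ker L^S = \bigoplus_i \ker L_i^S$. Consequently $\ker L^S$ is absolutely indecomposable if and only if exactly one summand $\ker L_i^S$ is nonzero and that $W_i$ is of real type; and $L$ is invertible iff every $L_i$ is invertible.

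Next I translate the "general position" condition into a statement about the diagonal blocks. Define, for each $i$, the "bad set" $B_i = \{\lambda : L_i(\lambda) \text{ is not invertible}\}$; the family is in general position precisely when (a) for each $\lambda$, at most one of the $B_i$ contains $\lambda$, (b) whenever $\lambda \in B_i$ with $W_i$ of real (i.e.\ absolutely decomposable complex or quaternionic) type, that is forbidden, and actually we need $\ker L_i(\lambda)^S$ to be all of $W_i$ when it is nonzero — but since $W_i$ is indecomposable, by Proposition \ref{oneev} $\ker L_i^S$ is automatically $0$ or $W_i$, so (b) just says $B_i = \emptyset$ for $W_i$ not of real type. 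So the family is in general position iff: $B_i = \emptyset$ for every $W_i$ of complex or quaternionic type, and the sets $B_i$ for $W_i$ of real type are pairwise disjoint. For a real-type $W_i$, $\mathrm{End}(W_i)/\mathrm{End}^{\rm Nil}(W_i) \cong \R$, so the semisimple part of $L_i(\lambda)$ is a scalar $\mu_i(\lambda) \in \R$ (times the identity, modulo nilpotents), and $\lambda \in B_i$ iff $\mu_i(\lambda) = 0$; the function $\lambda \mapsto \mu_i(\lambda)$ is $C^1$ because it is a coefficient in the characteristic-polynomial-type expression for $L_i$.

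The openness and density then follow from standard transversality/genericity reasoning applied blockwise. For \emph{density}: start with an arbitrary $C^1$ family $L(\lambda)$. First perturb each complex- or quaternionic-type block $L_i$ by adding a small constant multiple of a fixed invertible endomorphism of $W_i$ (the identity, say) to make $\mu_i$ — now a $\mathbb{D}_i$-valued $C^1$ function with $\mathbb{D}_i = \C$ or $\H$, hence $\mathrm{codim} \geq 2$ — miss $0$ entirely; a generic small constant works since the "zero set" of a map into $\R^2$ or $\R^4$ is avoided by a generic translate. For the real-type blocks, perturb each $L_i$ by a small constant so that $\mu_i: (\lambda_0,\lambda_1) \to \R$ becomes a $C^1$ function crossing $0$ transversally, hence with isolated zeros, and then perturb the (finitely many, on any compact subinterval) blocks pairwise so their zero sets are disjoint — e.g. shift $\mu_i \mapsto \mu_i + c_i$ for generic small $c_i$, invoking that the zero sets, being isolated points moving $C^1$-continuously, can be made to avoid each other by a generic choice (Sard / countable-union-of-measure-zero argument on the $c_i$). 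Assemble these block perturbations into a small perturbation of $L$ in the $C^1$-topology. For \emph{openness}: if $L$ is in general position, then on each compact $[\alpha,\beta] \subset (\lambda_0,\lambda_1)$ the complex/quaternionic $\mu_i$ are bounded away from $0$, an open condition; and the real-type $\mu_i$ have finitely many simple zeros with pairwise distinct locations, a configuration stable under $C^1$-small perturbations by the implicit function theorem; exhausting $(\lambda_0,\lambda_1)$ by such intervals gives openness.

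The main obstacle I anticipate is the first paragraph: rigorously establishing that the off-diagonal Hom-blocks cannot affect semisimple parts and that $\ker L^S$ decomposes as $\bigoplus \ker L_i^S$. This needs the non-isomorphism hypothesis used carefully — one wants that if $L = D + N$ with $D$ block-diagonal and $N$ strictly block-off-diagonal (in a suitable ordering), then $N$ is "nilpotent modulo the diagonal" so that $L^S = D^S$; making this precise may require the Fitting decomposition of $W$ under $L$ and checking that each Fitting summand respects the $W_i$-decomposition up to isomorphism, which is exactly where mutual non-isomorphism (via Krull--Schmidt and Proposition \ref{twoisos}) is indispensable. Once that structural lemma is in hand, everything else is routine finite-dimensional genericity.
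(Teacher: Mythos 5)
Your overall architecture (stratify by which indecomposable summands become degenerate, measure each stratum's codimension by $\mathrm{ind}\,W_i$, then apply transversality) matches the paper's, but the proof has a genuine gap at exactly the point you flag: the structural claim that $L^S=\mathrm{diag}(L_1^S,\ldots,L_m^S)$ and $\ker L^S=\bigoplus_i\ker L_i^S$ is not only unproven but, as literally stated, false. Off-diagonal blocks between non-isomorphic indecomposables can be nonzero (e.g.\ the inclusion of a subrepresentation that happens to appear as an external summand), and then $L$ can be, say, a non-block-diagonal idempotent: such an $L$ is semisimple, so $L^S=L$ is not block-diagonal and $L^S\neq D^S$. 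What is true — and what your argument actually needs — is the weaker statement that the \emph{isomorphism class} of $\ker L^S$ is controlled by the diagonal blocks; equivalently, that $\{M\in\mathrm{End}(W)\mid M_{ii}\ \mbox{nilpotent for all}\ i\}$ is the radical of $\mathrm{End}(W)$, so that invertibility of $L$ and the value of each $\mu_i$ can be read off modulo it. Proving this is the real content of the theorem, and your proposal defers it. The paper avoids the global claim entirely: its Lemma \ref{preplemma} conjugates every endomorphism near a fixed $L_0$ into block form with respect to the Fitting decomposition $\ker L_0^S\oplus\mathrm{im}\,L_0^S$ of $L_0$ itself (via an implicit function theorem argument), and then Proposition \ref{isoprop} shows only a containment: the nilpotent cone of $\mathrm{End}(W_{i_1}\oplus\cdots\oplus W_{i_k})$ sits inside the subspace where all diagonal blocks are nilpotent, using Propositions \ref{oneev}, \ref{twoisos} and \ref{sumnilpotent} to kill the contributions of paths through distinct, non-isomorphic summands. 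That one-sided containment already gives the codimension bound $\sum_j\mathrm{ind}\,W_{i_j}\geq 2$ for every non-absolutely-indecomposable stratum, which is all the Thom transversality theorem requires.

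Two smaller points. First, your density step for complex or quaternionic blocks perturbs by real multiples of the identity; this only translates $\mu_i$ along the copy of $\R$ inside $\mathbb{D}_i$, and the image of the $C^1$ curve $\mu_i$ may contain a real interval, so no small such translate need make $\mu_i$ miss zero. You should instead perturb by $\sum_k c_{ik}E_i^{(k)}$ with the $E_i^{(k)}$ mapping to a basis of $\mathbb{D}_i$, and then invoke that a $C^1$ curve has measure zero in $\R^{d_i}$ for $d_i\geq 2$ — which is just the codimension-$\geq 2$ transversality argument the paper uses in one stroke. Second, the $C^1$-dependence of $\mu_i$ on $\lambda$ is immediate (the quotient map $\mathrm{End}(W_i)\to\mathbb{D}_i$ is linear since $\mathrm{End}^{\rm Nil}(W_i)$ is a subspace by Corollary \ref{nilpotentideal}), so no characteristic-polynomial argument is needed there. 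If you supply the radical computation, your scalar-function formulation of general position is a clean and arguably more explicit endgame than the paper's; without it, the proof is incomplete at its core.
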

\begin{proof}[Sketch]
Under the prescribed condition on the representation, we will prove below that the set of endomorphisms
$$\{L\in {\rm End}(W) \, |\, \ker L^S \ \mbox{is not absolutely indecomposable}\}$$
is contained in a finite union of submanifolds of ${\rm End}(W)$, each of which has co-dimension at least $2$. Theorem \ref{genericcodimone} therefore follows from the Thom transversality theorem (that implies that every smooth curve can smoothly be perturbed into a curve that does not intersect any given manifold of co-dimension $2$ or higher). 
\end{proof}
The full proof of Theorem \ref{genericcodimone} will be given below in a number of steps, starting with the following preparatory lemma.

\begin{lemma}\label{preplemma}
Let $L_0\in {\rm End}(W)$ and denote by $$W=\ker L^S_0 \oplus {\rm im}\, L^S_0$$ the decomposition into generalized kernel and reduced image of $L_0$, with respect to which  
$$L_0=\left(\begin{array}{cc} L_0^{11}  & 0 \\ 0 & L^{22}_0\end{array}\right)\ \mbox{with}\ L_0^{11}\ \mbox{nilpotent and}\ L_0^{22}\ \mbox{invertible}.$$ 
Then there is an open neighbourhood $U \subset {\rm End}(W)$ of the zero endomorphism and smooth maps $\phi^{11}: U \to {\rm End}(\ker L^S_0)$ and $\phi^{22}: U \to {\rm End}({\rm im}\, L_0^S)$ so that for every $L \in U$, 
$$ L_0+ L \ \mbox{is conjugate to}\ \left(\begin{array}{cc} \phi^{11}(L) & 0 \\ 0 & \phi^{22}(L) \end{array}\right)\, .$$
It holds that $\phi^{11}(L)= L_0^{11}+L^{11} + \mathcal{O}(||L||^2)$ and $\phi^{22}(L)=L^{22}_0+ L^{22} + \mathcal{O}(||L||^2)$.
\end{lemma}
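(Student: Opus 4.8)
The plan is to construct the block-diagonalizing conjugation as a smooth family of invertible operators depending on the perturbation $L$, using the implicit function theorem. First I would fix the splitting $W=\ker L_0^S\oplus\mathrm{im}\,L_0^S$ and write a general operator $M$ on $W$ in block form $M=\left(\begin{smallmatrix} M^{11} & M^{12}\\ M^{21}& M^{22}\end{smallmatrix}\right)$. The perturbed operator is $M=L_0+L$ with $L_0$ already block-diagonal (entries $L_0^{11}$ nilpotent, $L_0^{22}$ invertible) and $L$ small with blocks $L^{11},L^{12},L^{21},L^{22}$. I would look for a conjugating operator of the form $T=I+N$ where $N=\left(\begin{smallmatrix} 0 & N^{12}\\ N^{21}& 0\end{smallmatrix}\right)$ is purely off-diagonal (and small), and demand that $T^{-1}(L_0+L)T$ be block-diagonal. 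For $\|N\|$ small $T$ is automatically invertible, so this is a legitimate conjugation.

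The next step is to turn "the off-diagonal blocks of $T^{-1}(L_0+L)T$ vanish" into an equation to which the implicit function theorem applies. Write $G(N,L):=\bigl(\text{the }(1,2)\text{ block},\ \text{the }(2,1)\text{ block of }(I+N)^{-1}(L_0+L)(I+N)\bigr)$, a smooth map from a neighborhood of $(0,0)$ in $\mathrm{Hom}(\mathrm{im}\,L_0^S,\ker L_0^S)\oplus\mathrm{Hom}(\ker L_0^S,\mathrm{im}\,L_0^S)$ times $\mathrm{End}(W)$ into the same target space. Clearly $G(0,0)=0$ since $L_0$ is block-diagonal. The derivative $D_N G(0,0)$ acting on $(N^{12},N^{21})$ is, to first order, the commutator $[L_0,N]$ read off in the off-diagonal blocks, namely
\begin{align}
(N^{12},N^{21})\longmapsto \bigl(L_0^{11}N^{12}-N^{12}L_0^{22},\ L_0^{22}N^{21}-N^{21}L_0^{11}\bigr).\nonumber
\end{align}
The key algebraic point is that this Sylvester-type operator is invertible: since $L_0^{11}$ is nilpotent and $L_0^{22}$ is invertible, the spectra of $L_0^{11}$ and $L_0^{22}$ are disjoint ($\{0\}$ versus a set not containing $0$), so each Sylvester operator $X\mapsto L_0^{11}X-XL_0^{22}$ and $X\mapsto L_0^{22}X-XL_0^{11}$ has trivial kernel, hence is an isomorphism on the relevant finite-dimensional $\mathrm{Hom}$-space. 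Therefore the implicit function theorem yields a smooth map $L\mapsto N(L)$ on a neighborhood $U$ of $0$ in $\mathrm{End}(W)$, with $N(0)=0$, such that $G(N(L),L)=0$, i.e. $(I+N(L))^{-1}(L_0+L)(I+N(L))$ is block-diagonal. Defining $\phi^{11}(L),\phi^{22}(L)$ to be its two diagonal blocks gives the desired smooth maps.

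Finally I would read off the first-order expansion. Since $N(L)=\mathcal{O}(\|L\|)$, conjugation by $I+N(L)$ changes the diagonal blocks only at order $\|N(L)\|\cdot\|L_0+L\|$; more precisely the $(1,1)$ block of $(I+N(L))^{-1}(L_0+L)(I+N(L))$ equals $L_0^{11}+L^{11}$ plus terms each carrying at least one factor $N(L)$ times an off-diagonal block of $L_0+L$ (which is $L^{12}$ or $L^{21}$, itself $\mathcal{O}(\|L\|)$), hence $\mathcal{O}(\|L\|^2)$; likewise for the $(2,2)$ block. This gives $\phi^{11}(L)=L_0^{11}+L^{11}+\mathcal{O}(\|L\|^2)$ and $\phi^{22}(L)=L_0^{22}+L^{22}+\mathcal{O}(\|L\|^2)$. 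The main obstacle in the argument is establishing invertibility of the linearized (Sylvester) operator $D_NG(0,0)$ — everything else is a routine application of the implicit function theorem and Taylor expansion — and the crux there is exactly the disjointness of the spectra of the nilpotent block $L_0^{11}$ and the invertible block $L_0^{22}$.
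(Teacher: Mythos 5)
Your proposal is correct and follows essentially the same route as the paper: set up the conjugation $(I+N)^{-1}(L_0+L)(I+N)$, kill the off-diagonal blocks via the implicit function theorem, and observe that the linearized operator on the off-diagonal blocks is invertible. The only cosmetic difference is that you justify invertibility of the Sylvester operator by disjointness of the spectra of the nilpotent block $L_0^{11}$ and the invertible block $L_0^{22}$, whereas the paper solves the homological equation explicitly by a finite iteration that terminates because $L_0^{11}$ is nilpotent; both arguments are standard and equally valid (and both restrict correctly to the subspace of representation homomorphisms, since the Sylvester operator preserves it).
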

\begin{proof}
Let us define the smooth map $$\Phi: {\rm End}(W)\times \{M\in {\rm End}(W) \, |\, I+M\ \mbox{is invertible}\} \to {\rm End}(W)$$ by  
$$\Phi(L, M) = (I+M) \circ (L_0 + L) \circ (I+M)^{-1}\, .$$
This map admits the Taylor expansion 
\begin{align}\nonumber
 \Phi(L, M)& = (I+M) \circ (L_0+L)\circ (I-M+\mathcal{O}(||M||^2)) \\ \nonumber 
 & = L_0 + L + [M, L_0] + \mathcal{O}(||L||^2 + ||M||^2) \, .
\end{align}
Let us now write $L$ and $M$ in terms of the decomposition $W=\ker L_0^S \oplus {\rm im}\, L_0^S$, i.e. for the appropriate homomorphisms $L^{ij}, M^{ij}$ ($1\leq i,j\leq 2$) we write
$$L= \left(\begin{array}{cc} L^{11}  & L^{12} \\ L^{21} & L^{22}\end{array}\right)\ \mbox{and} \ M= \left(\begin{array}{cc} M^{11}  & M^{12} \\ M^{21} & M^{22}\end{array}\right)  \, .$$
In the same way we denote
$$
\Phi(L,M)= \left(\begin{array}{cc} \Phi^{11}(L,M)  & \Phi^{12}(L,M) \\ \Phi^{21}(L,M) & \Phi^{22}(L,M) \end{array}\right)\, .$$
It is easy to check that

\begin{align}
\nonumber  L+[M,L_0] = \nonumber
 \left(\begin{array}{ll} L^{11} + M^{11} L_0^{11}-L_0^{11}M^{11} & L^{12} +M^{12}L_0^{22} - L_0^{11}M^{12}\\ L^{21} + M^{21}L_0^{11}-L_0^{22}M^{21}& L^{22} + M^{22} L_0^{22} -L_0^{22}M^{22}\end{array}\right)\, .
\end{align}
In other words, 
\begin{align}
&D\Phi^{11}(0, 0)\cdot (L,M) = L^{11} + L_0^{11} M^{11} - M^{11}L_0^{11} \, ,\nonumber \\
&D\Phi^{12}(0, 0)\cdot (L,M) = L^{12} + M^{12}L_0^{22}-L_0^{11}M^{12}\, , \nonumber \\
&D\Phi^{21}(0, 0)\cdot (L,M) = L^{21} + M^{21}L_0^{11}-L_0^{22}M^{21}\, , \nonumber \\
&D\Phi^{22}(0, 0)\cdot (L,M) = L^{22} +M^{22} L_0^{22} - L_0^{22} M^{22} \, .\nonumber
\end{align}
We claim that the operator
$$D_{M^{12}}\Phi^{12}(0,0): M^{12}\mapsto M^{12}L_0^{22}-L_0^{11}M^{12}  \ \mbox{from} \  {\rm Hom}({\rm im} \, L_0^S, \ker L_0^S) \ \mbox{to itself} $$
is invertible. Indeed, the homological equation 
$$M^{12}L_0^{22}-L_0^{11}M^{12} = -L^{12}$$
can be solved for $M^{12}$ to give, for any $N\geq 0$, that
\begin{align}
M^{12}  & =  -L^{12}(L_0^{22})^{-1}  + L_0^{11}M^{12}(L_0^{22})^{-1}     \nonumber \\ \nonumber
 & = -L^{12}(L_0^{22})^{-1}  - L_0^{11} L^{12}(L_0^{22})^{-2} +  (L_0^{11})^{2}M^{12}(L_0^{22})^{-2} \nonumber \\ \nonumber
 & = \ldots = -\sum_{n=0}^{N} (L_0^{11})^n L^{12} (L_0^{22})^{-(n+1)} + (L_0^{11})^{N+1}M^{12}(L_0^{22})^{-(N+1)}\, .
\end{align} 
Because $L_0^{11}$ is nilpotent, $(L_0^{11})^{N+1}=0$ for large enough $N$. This proves our claim that $D_{M^{12}}\Phi^{12}(0,0)$ is invertible. In the same way, $D_{M^{21}}\Phi^{21}(0,0)$ is invertible, and as a consequence so is the map 
$$D_{(M^{12}, M^{21})}(\Phi^{12}, \Phi^{21})(0,0): (M^{12}, M^{21})\mapsto ( M^{12}L_0^{22}-L_0^{11}M^{12},  M^{21}L_0^{11}-L_0^{22}M^{21} )\, . $$ 
This proves, by the implicit function theorem, that there are smooth functions 
$$M^{12}=M^{12}(L, M^{11}, M^{22})\ \mbox{and} \ M^{21}=M^{21}(L, M^{11}, M^{22})\, ,$$ defined for $L, M^{11}$ and $M^{22}$ near zero, such that 
$$\Phi^{12}(L, M(L, M^{11}, M^{22}))=0\ \mbox{and}\ \Phi^{21}(L, M(L, M^{11}, M^{22}))=0\, .$$
If we choose $M^{11}=0$ and $M^{22}=0$, then it actually follows that 
 \begin{align}\nonumber
& \phi^{11}(L):=\Phi^{11}(L, M(L, 0,0)) = L_0^{11}+L^{11} + \mathcal{O}(||L||^2) \ \mbox{and}\\ \nonumber
& \phi^{22}(L):=\Phi^{22}(L, M(L,0,0))=L^{22}_0+ L^{22}+\mathcal{O}(||L||^2)\, .
\end{align}
 This proves the lemma.
\end{proof}

\noindent Let us assume now that the representation $W$ splits as a sum of indecomposables
$$W = W_1\oplus \ldots \oplus W_m\, .$$ 
When $L\in {\rm End}(W)$ is an arbitrary endomorphism, then also $\ker L^S$ is a sum of indecomposable representations. In fact, by the Krull-Schmidt theorem, 
$$\ker L^S\cong W_{i_1}\oplus \ldots \oplus W_{i_k}\ \mbox{with}\ k\leq m \ \mbox{and}\ 1\leq i_1 < i_2 < \ldots < i_k \leq m\, .$$
Thus, we can classify the endomorphisms of $W$ by the isomorphism type of their generalized kernels by defining for all $1\leq i_1 < i_2 < \ldots < i_k \leq m$ the collection
$${\rm Iso}(W_{i_1}\oplus \ldots \oplus W_{i_k}):=\{L\in {\rm End}(W)\, |\, \ker L^S \ \mbox{is isomorphic to} \  W_{i_1}\oplus \ldots \oplus W_{i_k}\}\, .$$

\noindent The following proposition gives a description of ${\rm Iso}(W_{i_1}\oplus \ldots \oplus W_{i_k})$ in case $W_{i_1}, \ldots, W_{i_k}$ are mutually nonisomorphic:

\begin{proposition}\label{isoprop}
Suppose $W$ splits as the sum of indecomposables $W_1\oplus \ldots \oplus W_m$ and let 
$${\rm ind}\, W_i := \dim {\rm End}(W_i)/{\rm End}^{\rm Nil}(W_i) \ (= 1,2 \ \mbox{or}\ 4)\ \mbox{for} \ 1\leq i \leq m\, .$$
Choose $1\leq i_1 < i_2 < \ldots < i_k \leq m$ and assume that the $W_{i_j}$ are mutually nonisomorphic. 
Then ${\rm Iso}(W_{i_1}\oplus \ldots \oplus W_{i_k})$ is contained in a submanifold of ${\rm End}(W)$ of co-dimension 
$${\rm ind}\, W_{i_1} + \ldots + {\rm ind}\, W_{i_k}\, .$$
\end{proposition}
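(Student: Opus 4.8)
The plan is to reduce the global statement to a local one using Lemma \ref{preplemma}, and then to count dimensions carefully on the ``kernel block''. First I would fix an endomorphism $L_0 \in {\rm Iso}(W_{i_1}\oplus \ldots \oplus W_{i_k})$, so that $\ker L_0^S \cong W_{i_1}\oplus \ldots \oplus W_{i_k}$ with $L_0^{11}$ nilpotent and $L_0^{22}$ invertible. By Lemma \ref{preplemma}, on a neighbourhood $U$ of $0$ in ${\rm End}(W)$ the endomorphism $L_0+L$ is conjugate to a block-diagonal endomorphism $\mathrm{diag}(\phi^{11}(L),\phi^{22}(L))$. Conjugation preserves the isomorphism type of the generalized kernel, and since $\phi^{22}(L)$ is invertible for $L$ near $0$ (being close to the invertible $L_0^{22}$), the generalized kernel of $L_0+L$ is isomorphic to that of $\phi^{11}(L)$, a curve of endomorphisms of the fixed representation $\ker L_0^S$. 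Thus I would reduce to proving: inside ${\rm End}(W_{i_1}\oplus\ldots\oplus W_{i_k})$, the set of endomorphisms whose generalized kernel is the whole space (equivalently, that are nilpotent) is contained in a submanifold of codimension ${\rm ind}\,W_{i_1}+\ldots+{\rm ind}\,W_{i_k}$. Because $\phi^{11}(L)=L_0^{11}+L^{11}+\mathcal{O}(\|L\|^2)$ has derivative $L^{11}$ in the $L^{11}$-direction, the map $L\mapsto \phi^{11}(L)$ is a submersion onto a neighbourhood of $L_0^{11}$, so pulling back a codimension-$c$ submanifold gives a codimension-$c$ submanifold; this legitimates the reduction.

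Next I would analyse ${\rm End}(W_{i_1}\oplus\ldots\oplus W_{i_k})$ when the $W_{i_j}$ are mutually nonisomorphic. Using the projections and embeddings, an endomorphism is a matrix $(M^{ab})$ with $M^{ab}\in {\rm Hom}(W_{i_b},W_{i_a})$. For $a\neq b$, since $W_{i_a}$ and $W_{i_b}$ are nonisomorphic indecomposables, Proposition \ref{twoisos} forces every $M^{ab}$ (and $M^{ba}$) to be nilpotent-or-zero in the appropriate sense; more precisely, I would argue that the ``semisimple part'' of such a block-matrix endomorphism is block-diagonal. The cleanest route is to show that the off-diagonal ${\rm Hom}(W_{i_b},W_{i_a})$ together with the nilpotent endomorphisms of each $W_{i_a}$ form the nilpotent ideal of ${\rm End}(\bigoplus W_{i_j})$: a block-diagonal endomorphism with each diagonal block invertible is invertible, and adding off-diagonal or nilpotent-diagonal terms (which land in a nilpotent ideal, by Corollary \ref{nilpotentideal} applied blockwise and Proposition \ref{twoisos} for the off-diagonal blocks) keeps invertibility, while if some diagonal block is noninvertible the whole endomorphism is noninvertible. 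Hence $\bigoplus W_{i_j}$-endomorphism $M$ is nilpotent if and only if each diagonal block $M^{aa}\in {\rm End}(W_{i_a})$ is nilpotent.

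Then the counting is routine. The set of nilpotent endomorphisms of a fixed indecomposable representation $W_{i_a}$ is, by Proposition \ref{oneev} and Schur's Lemma \ref{schur}, precisely ${\rm End}^{\rm Nil}(W_{i_a})$, a linear subspace of ${\rm End}(W_{i_a})$ of codimension $\dim {\rm End}(W_{i_a})/{\rm End}^{\rm Nil}(W_{i_a}) = {\rm ind}\,W_{i_a}$. Therefore the nilpotent endomorphisms of $\bigoplus_{j} W_{i_j}$ form the linear subspace $\bigoplus_a {\rm End}^{\rm Nil}(W_{i_a}) \oplus \bigoplus_{a\neq b}{\rm Hom}(W_{i_b},W_{i_a})$ of ${\rm End}(\bigoplus_j W_{i_j})$, whose codimension is exactly ${\rm ind}\,W_{i_1}+\ldots+{\rm ind}\,W_{i_k}$. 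Pulling this linear subspace back through the submersion $L\mapsto\phi^{11}(L)$ (translated so that $\phi^{11}(0)=L_0^{11}$ lies in it) yields, near $L_0$, a submanifold of ${\rm End}(W)$ of the same codimension containing ${\rm Iso}(W_{i_1}\oplus\ldots\oplus W_{i_k})\cap U$. Covering the (possibly noncompact) set ${\rm Iso}(W_{i_1}\oplus\ldots\oplus W_{i_k})$ by such neighbourhoods and taking a union of the resulting pieces, one still gets containment in a submanifold (or a finite/countable union thereof, which suffices for the transversality argument in Theorem \ref{genericcodimone}) of the stated codimension.

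The main obstacle I anticipate is the step identifying the nilpotent ideal of ${\rm End}(\bigoplus W_{i_j})$ with the block-diagonal-nilpotent-plus-off-diagonal subspace: one must handle the interaction of off-diagonal blocks under multiplication (products of off-diagonal blocks can land on the diagonal) and verify that the semisimple part of a general block endomorphism remains block-diagonal, which is exactly where mutual nonisomorphy of the $W_{i_j}$ is essential and where the argument would fail without it. A secondary technical point is making precise the sense in which ${\rm Iso}(\cdots)$ — an \emph{a priori} messy set — is contained in a genuine submanifold; here it is cleanest to prove containment locally via Lemma \ref{preplemma} and the implicit/submersion argument, and then simply note that a countable union of codimension-$\geq 2$ submanifolds is still negligible for Thom transversality, which is all Theorem \ref{genericcodimone} needs.
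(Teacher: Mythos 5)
Your proposal is correct and follows essentially the same route as the paper: reduce via Lemma \ref{preplemma} and the submersion $L\mapsto\phi^{11}(L)$ to showing that the nilpotent endomorphisms of $W_{i_1}\oplus\ldots\oplus W_{i_k}$ lie in a subspace of codimension ${\rm ind}\,W_{i_1}+\ldots+{\rm ind}\,W_{i_k}$, then use mutual nonisomorphy (via Propositions \ref{twoisos}, \ref{oneev} and \ref{sumnilpotent}) to control the blocks. One caveat: your sentence deducing ``$M$ nilpotent iff each $M^{aa}$ nilpotent'' from the invertibility dichotomy is a non sequitur as written — nilpotency of $M$ only gives noninvertibility of \emph{some} diagonal block that way — and the needed implication must instead come either from your quotient-by-the-nilpotent-ideal argument made precise, or, as the paper does, from expanding $(B^n)^{jj}=(B^{jj})^n+\mbox{``nilpotent''}$ and letting $B^n=0$; also note that only the containment ${\rm End}^{\rm Nil}(\bigoplus_j W_{i_j})\subset\{B\,|\,B^{jj}\ \mbox{nilpotent}\}$ is required, not the equality you claim.
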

\begin{proof}
Choose an arbitrary isomorphism $L_0\in {\rm Iso}(W_{i_1}\oplus \ldots \oplus W_{i_k})$ and recall from Lemma \ref{preplemma} that an endomorphism $L_0+L$ close to $L_0$ is conjugate to
$$\left(\begin{array}{cc} \phi^{11}(L) & 0 \\ 0 & \phi^{22}(L) \end{array}\right)\, .$$ 
It is clear that $\phi^{22}(L)$ is invertible, because $\phi^{22}(L)=L_0^{22}+\mathcal{O}(||L||)$. The generalized kernels of $L_0$ and $L_0+L$ are therefore isomorphic if and only if $\phi^{11}(L)$ is nilpotent. 

Moreover, $\phi^{11}$ is a submersion at $L_0$ because $\phi^{11}(L)=L_0^{11}+L^{11}+\mathcal{O}(||L||^2)$. By the submersion theorem it thus suffices to check that  
$${\rm End}^{\rm Nil}(W_{i_1} \oplus \ldots \oplus W_{i_k})=\{B\in {\rm End}(W_{i_1}\oplus \ldots \oplus W_{i_k}) \, |\, B\ \mbox{is nilpotent}\, \}$$
is contained in a submanifold of ${\rm End}(W_{i_1}\oplus \ldots \oplus W_{i_k})$ of the prescribed co-dimension. 

To check this fact, let us decompose an arbitrary $B\in {\rm End}(W_{i_1} \oplus \ldots \oplus W_{i_k})$ as
$$B= \left( \begin{array}{cccc} B^{11} & B^{12} & \cdots & B^{1k} \\ B^{21} & B^{22} & \cdots & B_{2k}\\ \vdots & \vdots & \ddots & \vdots \\ B^{k1} & B^{k2} & \cdots & B^{kk}  \end{array} \right) $$
with $B^{jl}\in {\rm Hom}(W_{i_j}, W_{i_l})$. Then it holds for any $n\geq 1$ that 
$$(B^n)^{jl} = \!\!\! \sum_{1\leq k_1, \ldots, k_{n-1} \leq k} \!\!\! B^{j k_1} B^{k_1k_2} \cdots B^{k_{n-1} l}\, .$$
We now remark that any composition 
$$B^{j k_1} B^{k_1k_2} \cdots B^{k_{n-1} j} \in {\rm End}(W_{i_j})$$
is nilpotent as soon as there exists an $r$ with $k_r\neq j$: otherwise, by Proposition \ref{oneev}, it would have been an isomorphism and by Proposition \ref{twoisos}, then $W_{i_j}$ would have been isomorphic to $W_{i_{k_r}}$, which contradicts our assumptions. It therefore follows from Proposition \ref{sumnilpotent} that
$$(B^n)^{jj} = (B^{jj})^n + ``{\rm nilpotent}"\, .$$
Assume now that $B$ is nilpotent. Then there is an $n$ so that $B^n=0$. For this $n$ it then holds that $(B^{jj})^n$ is nilpotent and hence that $B^{jj}$ is nilpotent. This finishes the proof that whenever $W_{i_1}, \ldots, W_{i_k}$ are mutually nonisomorphic indecomposable representations, then 
$${\rm End}^{\rm Nil}(W_{i_1} \oplus \ldots \oplus W_{i_k}) \subset \{B\in {\rm End}(W_{i_1} \oplus \ldots \oplus W_{i_k})\, |\, B^{jj}\in {\rm End}^{\rm Nil}(W_{i_j})\ \mbox{for all}\ 1\leq j\leq k\, \}\, .$$
The latter is a subspace (and in particular a submanifold) of ${\rm End}(W_{i_1}\oplus \ldots \oplus W_{i_k})$ of co-dimension ${\rm ind}\, W_{i_1}+ \ldots + {\rm ind}\, W_{i_k}$. This proves the proposition. 
\end{proof}
 
\begin{remark}
If $W_i$ is one of the indecomposable factors of $W$, then ${\rm End}^{\rm Nil}(W_i)\subset {\rm End}(W_i)$ is a linear subspace (it is not just contained in one). This implies that ${\rm Iso}(W_i)$ is a true submanifold of ${\rm End}(W)$ (and not just contained in one).
The co-dimension of this submanifold is $1$ if $W_i$ is of real type, $2$ if $W_i$ is of complex type and $4$ if $W_i$ is of quaternionic type.
\end{remark}
 \noindent We are now ready to finish the proof of Theorem \ref{genericcodimone}. 
 
 \begin{proofof}\! \!\!\! [of Theorem \ref{genericcodimone}]: Recall that the co-dimension of ${\rm Iso}(W_{i_1}\oplus \ldots \oplus W_{i_k})$ is equal to 
 $${\rm ind} \, W_{i_1} + \ldots + {\rm ind}\,  W_{i_k}\, .$$
 In particular, this co-dimension is equal to zero if and only if $k=0$ and is equal to one if and only if $k=1$ and $W_{i_1}$ is absolutely indecomposable.
  This proves that the collection $$\{L\in {\rm End}(W) \, |\, \ker L^S \ \mbox{is not absolutely indecomposable}\}$$
is contained in the union of finitely many submanifolds of ${\rm End}(W)$, each of which has co-dimension $2$ or higher. The Thom transversality theorem  finishes the proof. 
\end{proofof}
\noindent For completeness, let us state here as an obvious corollary of Theorem \ref{genericcodimone} that a generic co-dimension one synchrony breaking steady state bifurcation must occur along an absolutely indecomposable representation:
\begin{corollary}
Let $W$ be a representation of a semigroup $\Sigma$ and assume that it splits as a sum of mutually non-isomorphic indecomposable representations. Moreover, let $X_0\in W$ be  $\Sigma$-symmetric, i.e. $A_{\sigma_j}(X_0)=X_0$ for all $1\leq j\leq n$. 

We define the set of curves of equivariant vector fields admitting $X_0$ as an equilibrium by
$$E:=\{ \Gamma: W\times (\lambda_0, \lambda_1) \to W\, | \  \Gamma\ \mbox{is smooth, commutes with}\ \Sigma\ \mbox{and}\ \Gamma(X_0; \lambda)=0 \, \}$$
and the subset of those curves that are in general position as
$$E_{\rm gp}:=\left\{\, \Gamma\in E\, |\,  \ker D_{X}\Gamma(X_0; \lambda)\ \mbox{is either trivial or absolutely indecomposable}\, \right\}\, .$$ 
Then it holds that $E_{\rm gp}$ is open and dense in $E$ in the $C^1$-topology.
\end{corollary}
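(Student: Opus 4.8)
The plan is to deduce this directly from Theorem~\ref{genericcodimone} by passing to the one-parameter family of linearizations at $X_0$. To each $\Gamma\in E$ I would associate the curve $L_\Gamma\colon(\lambda_0,\lambda_1)\to\mathfrak{gl}(W)$ defined by $L_\Gamma(\lambda):=D_X\Gamma(X_0;\lambda)$. Since $\Gamma$ is $\Sigma$-equivariant and $X_0$ is a $\Sigma$-symmetric equilibrium of $\Gamma(\,\cdot\,;\lambda)$ for every $\lambda$, Remark~\ref{linearization} shows that $L_\Gamma(\lambda)\in{\rm End}(W)$ for all $\lambda$, so $L_\Gamma$ is a genuine one-parameter family of endomorphisms of the representation $W$, and it is as smooth as $\Gamma$, in particular $C^1$. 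The hypothesis that $W$ splits as a sum of mutually non-isomorphic indecomposables is exactly the standing assumption of the corollary, so Theorem~\ref{genericcodimone} applies to such curves. By construction $\Gamma\in E_{\rm gp}$ precisely when, for each $\lambda$, the linearization $L_\Gamma(\lambda)$ is invertible or has absolutely indecomposable (generalized) kernel, that is, precisely when $L_\Gamma$ is in general position in the sense of Section~\ref{secgeneric}. Thus $E_{\rm gp}$ is the preimage of the set of curves in general position under the map $\Gamma\mapsto L_\Gamma$.

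For \textbf{openness} I would note that $\Gamma\mapsto L_\Gamma$ is continuous for the $C^1$-topologies, so $E_{\rm gp}$, being the preimage of a set that is open by Theorem~\ref{genericcodimone}, is open in $E$. For \textbf{density}, fix $\Gamma\in E$ and a $C^1$-neighbourhood of it. By Theorem~\ref{genericcodimone} there is a curve $\widetilde L\colon(\lambda_0,\lambda_1)\to{\rm End}(W)$ in general position that is $C^1$-close to $L_\Gamma$. I would then \emph{lift} this perturbation back to $E$ by setting
$$\widetilde\Gamma(X;\lambda):=\Gamma(X;\lambda)+\bigl(\widetilde L(\lambda)-L_\Gamma(\lambda)\bigr)(X-X_0)\, .$$
This $\widetilde\Gamma$ is smooth, satisfies $\widetilde\Gamma(X_0;\lambda)=0$ and $D_X\widetilde\Gamma(X_0;\lambda)=\widetilde L(\lambda)$, and it is $\Sigma$-equivariant: the correction term is equivariant because $A_{\sigma_j}X_0=X_0$ (so $A_{\sigma_j}X-X_0=A_{\sigma_j}(X-X_0)$) and because $\widetilde L(\lambda)-L_\Gamma(\lambda)\in{\rm End}(W)$ commutes with every $A_{\sigma_j}$, while $\Gamma$ itself was equivariant. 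Hence $\widetilde\Gamma\in E_{\rm gp}$, and since the correction term and its first derivatives are controlled by $\|\widetilde L-L_\Gamma\|_{C^1}$ on bounded sets, $\widetilde\Gamma$ lies in the prescribed neighbourhood of $\Gamma$. This gives density.

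The \textbf{main obstacle} is not conceptual but a matter of matching topologies: the assignment $\Gamma\mapsto L_\Gamma$ involves a $\lambda$-derivative of an $X$-derivative of $\Gamma$, so one must be careful about what ``$C^1$-topology on $E$'' means and whether the (possibly Whitney) topology on $E$ genuinely pushes forward to the $C^1$-topology on curves of endomorphisms used in Theorem~\ref{genericcodimone}; if the strong Whitney topology is intended, the affine correction $(\widetilde L-L_\Gamma)(X-X_0)$ must be localized near $\{X_0\}\times(\lambda_0,\lambda_1)$ by a cut-off function in $X$ so that it does not grow at infinity, which is routine but needs doing. A secondary point worth spelling out is the identification, used implicitly above, of ``$\ker D_X\Gamma(X_0;\lambda)$ trivial or absolutely indecomposable'' with ``$L_\Gamma(\lambda)$ invertible or $\ker L_\Gamma(\lambda)^S$ absolutely indecomposable'', i.e.\ reading the kernel in the definition of $E_{\rm gp}$ as the generalized kernel of Section~\ref{seclyapunovschmidt}; with that reading the corollary is exactly Theorem~\ref{genericcodimone} transported along $\Gamma\mapsto L_\Gamma$.
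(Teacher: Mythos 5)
Your argument is correct and is exactly the route the paper intends: the paper's own proof of this corollary is the single line ``Obvious from Theorem \ref{genericcodimone}'', and your passage to the curve of linearizations $\Gamma\mapsto L_\Gamma$ (justified by Remark \ref{linearization}), together with the equivariant affine lift $\widetilde\Gamma(X;\lambda)=\Gamma(X;\lambda)+(\widetilde L(\lambda)-L_\Gamma(\lambda))(X-X_0)$ for density, is precisely the content being elided. The caveats you raise (the $\lambda$-derivative of $L_\Gamma$ being a second derivative of $\Gamma$, cut-offs for a Whitney topology, and reading the kernel as the generalized kernel) are legitimate loose ends, but the paper does not address them either.
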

\begin{proof}
Obvious from Theorem \ref{genericcodimone}. 
\end{proof}

\section{Monoid networks with two or three cells}\label{sectwoorthree}
In this section, we investigate the steady state bifurcations that can occur in fundamental monoid networks with two or three cells, where for simplicity we let $V$ be one-dimensional. It so turns out that for all these networks, the corresponding semigroup representations split as the sum of mutually nonisomorphic indecomposable representations. Thus, we are able to classify all possible generic co-dimension one steady state bifurcations in these networks.

\subsection{Monoid networks with two cells}
It is clear that up to isomorphism there are precisely two monoids with two elements, say $\Sigma_1$ and $\Sigma_2$, with multiplication tables
$$ \begin{array}{c|cc}\Sigma_1 & \sigma_1 & \sigma_2\\ \hline 
\sigma_1 & \sigma_1 & \sigma_2 \\
\sigma_2 & \sigma_2 & \sigma_1
\end{array}  \ \mbox{and} \ 
\begin{array}{c|cc}\Sigma_2 & \sigma_1 & \sigma_2\\ \hline 
\sigma_1 & \sigma_1 & \sigma_2 \\
\sigma_2 & \sigma_2 & \sigma_2
\end{array}\, .
$$
Below, we shall investigate their fundamental networks separately.
\subsubsection*{Bifurcations for $\Sigma_1$}
The monoid $\Sigma_1$ is the group $\Z_2$ and the corresponding semigroup representation is given by
\begin{align}\nonumber
A_{\sigma_1}(X_1, X_2)=(X_1, X_2)\, , \\ \nonumber
A_{\sigma_2}(X_1, X_2)=(X_2, X_1)\, .
\end{align}
In particular, the fundamental network is given by the differential equations 
 \begin{align}\nonumber
 \begin{array}{rl}
 \dot X_1 &= f(X_1, X_2) \, ,\\
 \dot X_2 &= f(X_2, X_1)\, . \end{array} 
\end{align}
The bifurcation theory of such equivariant networks is of course well-known, but we summarize it here for completeness.

First of all, the indecomposable decomposition of the phase space is a unique decomposition into mutually nonisomorphic irreducible representations of $\Sigma_1$, given by 
$$\{X_1=X_2\} \oplus \{X_1+X_2=0\}\, .$$
The subrepresentation $\{X_1=X_2\}$ is trivial in the sense that $A_{\sigma_2}$ acts upon it as the identity. Thus, if we use $X_1$ as a coordinate on this representation, the resulting bifurcation equation after Lyapunov-Schmidt reduction must be of the form $r(X_1;\lambda)=0$ for a function $r(X_1; \lambda)$ satisfying $r(0;0)=0$, i.e.
$$r(X_1;\lambda)=a \lambda + b X_1^2 + \mathcal{O}(|\lambda|^2 + |\lambda|\cdot |X_1| + |X_1|^3)\, .$$
Under the generic conditions that $a, b\neq 0$, the solutions of the bifurcation equation are of the form
$$X_1=X_2=\pm\sqrt{-(a/b)\lambda}+\mathcal{O}(\lambda)\, .$$
We conclude that, generically, a synchronous saddle-node bifurcation takes place along the trivial subrepresentation.

The subrepresentation $\{X_1+X_2=0\}$ is acted upon by $A_{\sigma_2}$ as minus identity. Choosing again $X_1$ as a coordinate, this yields an equivariant bifurcation equation of the form $r(X_1; \lambda)=0$ with $r(-X_1;\lambda)=-r(X_1; \lambda)$, i.e.
$$r(X_1; \lambda) = a \lambda X_1 + b X_1^3 + \mathcal{O}(|\lambda|^2\cdot |X_1| + |\lambda|\cdot |X_1|^3 + |X_1|^5)\, .$$
Under the generic conditions that $a, b\neq 0$, this yields a pitchfork bifurcation, i.e. solutions are of the form
$$X_1=X_2=0\ \mbox{or}\ X_1=-X_2 = \pm \sqrt{-(a/b)\lambda} + \mathcal{O}(\lambda)\, .$$
\subsubsection*{Bifurcations for $\Sigma_2$.}
We first of all remark that the monoid $\Sigma_2$ is not a group. Its representation is given by 
\begin{align}\nonumber
A_{\sigma_1}(X_1, X_2)=(X_1, X_2)\, , \\ \nonumber
A_{\sigma_2}(X_1, X_2)=(X_2, X_2)\, ,
\end{align}
and the corresponding differential equations read
$$
  \begin{array}{rl}
 \dot X_1 &= f(X_1, X_2) \, ,\\
 \dot X_2 &= f(X_2, X_2) \, .
\end{array}
$$
Again, the indecomposable decomposition of the representation is a unique decomposition into mutually nonisomorphic irreducible representations, now given by 
$$\{X_1=X_2\}\oplus \{X_2=0\}\, .$$
The subrepresentation $\{X_1=X_2\}$ is trivial so that once more only a synchronous saddle-node bifurcation is expected along this representation.

The subrepresentation $\{X_2=0\}$ is acted upon by $A_{\sigma_2}$ as the zero map though. Equivariance of the reduced bifurcation equation $r(X_1; \lambda)=0$ under the map $X_1\mapsto 0$ just means that $r(0;\lambda)=0$ and thus that 
$$r(X_1; \lambda) = a \lambda X_1 + b X_1^2 + \mathcal{O}(|\lambda|\cdot |X_1|^2 + |X_1|^3)\, .$$
Under the generic conditions that $a, b \neq 0$, this produces a transcritical bifurcation with solution branches
$$X_1=X_2=0  \ \mbox{and}\ X_1=0, X_2 = -(a/b)\lambda + \mathcal{O}(\lambda^2)\, .$$
Interestingly, the transcritical bifurcation arises here as a generic co-dimension one equivariant bifurcation.

\subsection{Monoid networks with three cells}
Up to isomorphism, there are precisely $7$ monoids with three elements. Their multiplication tables are the following:
\begin{align}\nonumber
& \begin{array}{c|ccc}\Sigma_1 & \sigma_1 & \sigma_2 & \sigma_3\\ \hline 
\sigma_1 & \sigma_1 & \sigma_2  & \sigma_3\\
\sigma_2 & \sigma_2 & \sigma_2 & \sigma_2 \\
\sigma_3 & \sigma_3 & \sigma_2 & \sigma_2
\end{array} \ \ 
\begin{array}{c|ccc}\Sigma_2 & \sigma_1 & \sigma_2 & \sigma_3\\ \hline 
\sigma_1 & \sigma_1 & \sigma_2  & \sigma_3\\
\sigma_2 & \sigma_2 & \sigma_2 & \sigma_3 \\
\sigma_3 & \sigma_3 & \sigma_3 & \sigma_2
\end{array} \ \ 
\begin{array}{c|ccc}\Sigma_3 & \sigma_1 & \sigma_2 & \sigma_3\\ \hline 
\sigma_1 & \sigma_1 & \sigma_2  & \sigma_3\\
\sigma_2 & \sigma_2 & \sigma_2 & \sigma_3 \\
\sigma_3 & \sigma_3 & \sigma_3 & \sigma_3
\end{array} \ \ 
\begin{array}{c|ccc}\Sigma_4 & \sigma_1 & \sigma_2 & \sigma_3\\ \hline 
\sigma_1 & \sigma_1 & \sigma_2  & \sigma_3\\
\sigma_2 & \sigma_2 & \sigma_2 & \sigma_2 \\
\sigma_3 & \sigma_3 & \sigma_3 & \sigma_3
\end{array} 
\\ \nonumber
& \begin{array}{c|ccc}\Sigma_5 & \sigma_1 & \sigma_2 & \sigma_3\\ \hline 
\sigma_1 & \sigma_1 & \sigma_2  & \sigma_3\\
\sigma_2 & \sigma_2 & \sigma_2 & \sigma_3 \\
\sigma_3 & \sigma_3 & \sigma_2 & \sigma_3
\end{array} \ \ 
\begin{array}{c|ccc}\Sigma_6 & \sigma_1 & \sigma_2 & \sigma_3\\ \hline 
\sigma_1 & \sigma_1 & \sigma_2  & \sigma_3\\
\sigma_2 & \sigma_2 & \sigma_3 & \sigma_1 \\
\sigma_3 & \sigma_3 & \sigma_1 & \sigma_2
\end{array} \ \ 
\begin{array}{c|ccc}\Sigma_7 & \sigma_1 & \sigma_2 & \sigma_3\\ \hline 
\sigma_1 & \sigma_1 & \sigma_2  & \sigma_3\\
\sigma_2 & \sigma_2 & \sigma_1 & \sigma_3 \\
\sigma_3 & \sigma_3 & \sigma_3 & \sigma_3
\end{array} \, .
\end{align}
Below, we shall investigate the steady state bifurcations in the corresponding fundamental networks separately:

\subsubsection*{Bifurcations for $\Sigma_1$}
We have graphically depicted $\Sigma_1$ in Figure \ref{pict3} below.
\begin{figure}[ht]\renewcommand{\figurename}{\rm \bf \footnotesize Figure}
\centering
\begin{tikzpicture}[->,>=stealth',shorten >=1pt,auto,node distance=1.5cm,
                    thick,main node/.style={circle,draw,font=\sffamily\large\bfseries}]

  \node[main node] (1) {$x_1$};
  \node[main node] (2) [below of=1] {$x_2$};
  \node[main node] (3) [below of=2] {$x_3$};

   \node[main node] (4) [right of=1] {$x_1$};
  \node[main node] (5) [below of=4] {$x_2$};
  \node[main node] (6) [below of=5] {$x_3$};
  \node[main node] (7) [right of =4] {$x_1$};
  \node[main node] (8) [below  of=7] {$x_2$};
  \node[main node] (9) [below of=8] {$x_3$};

  \path[every node/.style={font=\sffamily\small}]
(1) edge [loop below] node {} (1)
(2) edge [loop below] node {} (2)
(3) edge [loop below] node {} (3)

(5) edge node {} (4)
(5) edge [loop below] node {} (5)
(5) edge [bend right] node {} (6)

(9) edge [bend right] node {} (7)
(8) edge [loop below] node {} (8)
(8) edge [bend right] node {} (9)
    ;
\end{tikzpicture}

\caption{\footnotesize {\rm The collection $\Sigma_1$ depicted as a directed multigraph.}}
\label{pict3}
\end{figure}
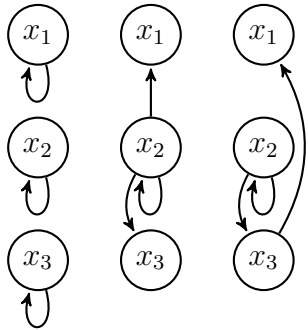

The representation of $\Sigma_1$ is given by 
  \begin{align}\nonumber
\begin{array}{rl}
A_{\sigma_1}(X) &= (X_1, X_2, X_3) \, ,\\
A_{\sigma_2}(X) &= (X_2, X_2, X_2) \, ,\\
A_{\sigma_3}(X) &= (X_3, X_2, X_2)\, .
\end{array} 
 \end{align}
 This representation uniquely splits as a sum of mutually nonisomorphic indecomposables
$$ \{X_1=X_2=X_3\} \oplus \{X_2=0\}\, .$$ 
The action of $\Sigma_1$ on the subrepresentation $\{X_1=X_2=X_3\}$ is trivial, so only a saddle-node bifurcation can generically occur along this irreducible representation.

On the indecomposable subrepresentation $\{X_2=0\}$, let us choose coordinates $(X_1, X_3)$. Then the action of $A_{\sigma_2}$ and $A_{\sigma_3}$ on this subrepresentation is given by 
\begin{align}\nonumber
\begin{array}{rl}
A_{\sigma_2}(X_1, X_3) &= (0, 0) \, ,\\
A_{\sigma_3}(X_1, X_3) &= (X_3, 0)\, .
\end{array} 
 \end{align}
This confirms that $\{X_2=0\}$ is indecomposable, but not irreducible, because it contains the one-dimensional subrepresentation $\{X_2=X_3=0\}$. Moreover, one computes that 
$${\rm End}(\{X_2=0\})=\{(X_1, X_3)\mapsto (\alpha X_1+\beta X_3, \alpha X_3)\, |\, \alpha, \beta \in \R\}\, . $$
This shows that $\{X_2=0\}$ is absolutely indecomposable (i.e. of real type), and that there exist nontrivial nilpotent endomorphisms, namely of the form $(X_1, X_3)\mapsto (\beta X_3, 0)$.

The bifurcation equation $r(X_1, X_3; \lambda) = (r_1(X_1, X_3; \lambda), r_3(X_1, X_3; \lambda)) = (0,0)$ is equivariant precisely when
$$r_{1}(0, 0; \lambda)=0, r_{3}(0, 0; \lambda)=0, r_1(X_3,0;\lambda) = r_{3}(X_1, X_3; \lambda) \ \mbox{and}\  r_3(X_3,0;\lambda) = 0 \, .$$
This implies that
\begin{align}\nonumber
r_1(X_1, X_3; \lambda)& =  a \lambda X_1 + bX_3 + cX_1^2 + \mathcal{O}(|\lambda|^2\cdot |X_1|+|\lambda|\cdot |X_1|^2 + |X_1|^3+|\lambda|\cdot |X_3|)\, , \\ \nonumber 
 r_3(X_1, X_3; \lambda)& =  a \lambda X_3 + cX_3^2  + \mathcal{O}(|\lambda|^2\cdot |X_3|+ |\lambda|\cdot |X_3|^2+|X_3|^3) \, .
\end{align} 
Under the generic conditions that $a, b, c\neq 0$, this gives three solution branches:
\begin{align}\nonumber
\begin{array}{lll} X_1=0, & X_2=0, & X_3=0,\\
X_1= -(a/c)\lambda + \mathcal{O}(\lambda^2), & X_2= 0,& X_3= 0, \\ \nonumber
X_1=\pm\sqrt{(ab/c^2)\lambda} + \mathcal{O}(\lambda),& X_2 = 0, & X_3= -(a/c)\lambda + \mathcal{O}(\lambda^2).
\end{array}
\end{align}
This means that a fully synchronous trivial branch, a partially synchronous transcritical branch and a fully nonsynchronous saddle-node branch coalesce in this bifurcation. We note that this phenomenon was observed before for this network in \cite{feedforwardRinkSanders} and \cite{CCN}. A diagram of this bifurcation is given in Figure \ref{figbif}.
 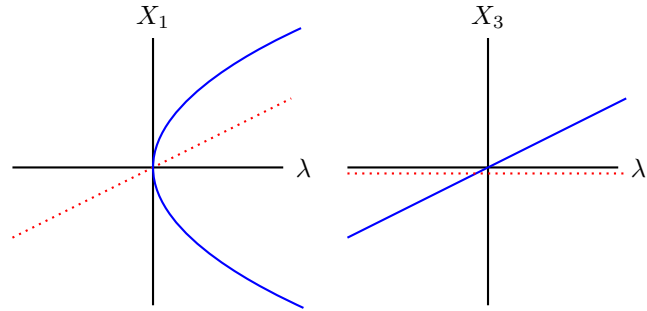
\begin{figure}[ht]\renewcommand{\figurename}{\rm \bf \footnotesize Figure}
\centering
\begin{tikzpicture} [>=stealth',shorten >=1pt,auto,node distance=3cm,
                    thick,main node/.style={draw,font=\sffamily\bfseries}]
 \node (a) at (-2,0) {};
 \node (a1) at (-2, -.08){};
 \node (b1) at (2,-.08){};
 \node (x) at (0,2) {$X_1$};
 \node (b) at (2,0) {$\lambda$};
 \node (c) at (-2,-1) {};
 \node (g) at (-2,-1){} ;
 \node (d) at (2,1) {};
 \node (h) at (2,1) {};
 \node (e) at (2,2) {};
 \node (f) at (2,-2) {};saddle-node
 \node (j) at (0,-2) {};
 \draw (x)--(j);
  \draw (a) --(b);
  \draw[dotted, red] (g) --(h);
  \draw[rotate=270, blue] (f.north) parabola bend (0,0) (e.south);
\end{tikzpicture}
 \begin{tikzpicture} [>=stealth',shorten >=1pt,auto,node distance=3cm,
                    thick,main node/.style={draw,font=\sffamily\bfseries}]
 \node (a) at (-2,0) {};
 \node (x) at (0,1) {};
 \node (b) at (2,0) {$\lambda$};
 \node (c) at (-2,-1) {};
 \node (g) at (-2,-0.08){} ;
 \node (g2) at (-2, -.15){};
 \node (d) at (2,1) {};
 \node (h) at (2,-0.08) {};
 \node (h2) at (2,-.15){};
 \node (e) at (2,2) {};
 \node (f) at (2,-2) {};
 \node (i) at (0,2) {$X_3$};
 \node (j) at (0,-2) {};
\draw (i)--(j);
  \draw (a) --(b);
  \draw[blue] (c) --(d);
  \draw[dotted, red] (g) --(h);
\end{tikzpicture}
\caption{\footnotesize {\rm Bifurcation diagram of a co-dimension one steady state bifurcation in the fundamental network of $\Sigma_1$. This figure depicts the nontrivial solution branches in case $a, b>0$ and $c<0$.}}
\label{figbif}
\end{figure}

\subsubsection*{Bifurcations for $\Sigma_2$}
We have graphically depicted $\Sigma_2$ in Figure \ref{pict4} below.
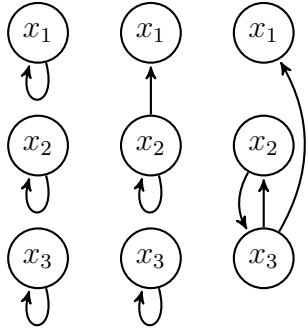
\begin{figure}[ht]\renewcommand{\figurename}{\rm \bf \footnotesize Figure}
\centering
\begin{tikzpicture}[->,>=stealth',shorten >=1pt,auto,node distance=1.5cm,
                    thick,main node/.style={circle,draw,font=\sffamily\large\bfseries}]

  \node[main node] (1) {$x_1$};
  \node[main node] (2) [below of=1] {$x_2$};
  \node[main node] (3) [below of=2] {$x_3$};

   \node[main node] (4) [right of=1] {$x_1$};
  \node[main node] (5) [below of=4] {$x_2$};
  \node[main node] (6) [below of=5] {$x_3$};
  \node[main node] (7) [right of =4] {$x_1$};
  \node[main node] (8) [below  of=7] {$x_2$};
  \node[main node] (9) [below of=8] {$x_3$};

  \path[every node/.style={font=\sffamily\small}]
(1) edge [loop below] node {} (1)
(2) edge [loop below] node {} (2)
(3) edge [loop below] node {} (3)

(5) edge node {} (4)
(5) edge [loop below] node {} (5)
(6) edge [loop below] node {} (6)

(9) edge node {} (8)
(9) edge [bend right] node {} (7)
(8) edge [bend right] node {} (9)
    ;

\end{tikzpicture}
\caption{\footnotesize {\rm The collection $\Sigma_2$ depicted as a directed multigraph.}}
\label{pict4}
\end{figure}

The representation of $\Sigma_2$ is given by 
  \begin{align}\nonumber
\begin{array}{rl}
A_{\sigma_1}(X) &= (X_1, X_2, X_3) \, ,\\
A_{\sigma_2}(X) &= (X_2, X_2, X_3) \, ,\\
A_{\sigma_3}(X) &= (X_3, X_3, X_2)\, .
\end{array} 
 \end{align}
 This representation uniquely splits as a sum of mutually nonisomorphic one-dimensional irreducible representations
$$ \{X_1=X_2=X_3\}\oplus \{X_2=X_3=0\}\oplus \{X_1=X_2=-X_3\}\, .$$ 
The action of $\Sigma_2$ on the subrepresentation $\{X_1=X_2=X_3\}$ is trivial, so only a saddle-node bifurcation can generically occur along this irreducible representation.

On the subrepresentation $\{X_2=X_3=0\}$, both $A_{\sigma_2}$ and $A_{\sigma_3}$ act as the zero map. Thus one expects a transcritical bifurcation to occur along this irreducible representation.

On the subrepresentation $\{X_1=X_2=-X_3\}$, the map $A_{\sigma_2}$ acts as identity, while $A_{\sigma_3}$ acts as minus identity. This means that a generic steady state bifurcation along this irreducible representation must be a pitchfork bifurcation.

\subsubsection*{Bifurcations for $\Sigma_3$}
We have graphically depicted $\Sigma_3$ in Figure \ref{pict5} below.
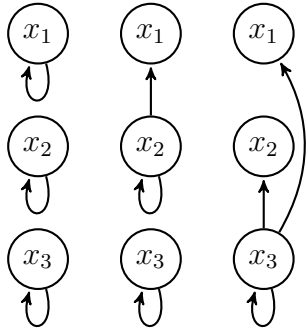
\begin{figure}[ht]\renewcommand{\figurename}{\rm \bf \footnotesize Figure}
\centering
\begin{tikzpicture}[->,>=stealth',shorten >=1pt,auto,node distance=1.5cm,
                    thick,main node/.style={circle,draw,font=\sffamily\large\bfseries}]

  \node[main node] (1) {$x_1$};
  \node[main node] (2) [below of=1] {$x_2$};
  \node[main node] (3) [below of=2] {$x_3$};

   \node[main node] (4) [right of=1] {$x_1$};
  \node[main node] (5) [below of=4] {$x_2$};
  \node[main node] (6) [below of=5] {$x_3$};
  \node[main node] (7) [right of =4] {$x_1$};
  \node[main node] (8) [below  of=7] {$x_2$};
  \node[main node] (9) [below of=8] {$x_3$};

  \path[every node/.style={font=\sffamily\small}]
(1) edge [loop below] node {} (1)
(2) edge [loop below] node {} (2)
(3) edge [loop below] node {} (3)

(5) edge node {} (4)
(5) edge [loop below] node {} (5)
(6) edge [loop below] node {} (6)

(9) edge node {} (8)
(9) edge [bend right] node {} (7)
(9) edge [loop below] node {} (9)
    ;

\end{tikzpicture}
\caption{\footnotesize {\rm The collection $\Sigma_3$ depicted as a directed multigraph.}}
\label{pict5}
\end{figure}

\noindent The representation of $\Sigma_3$ is given by 
  \begin{align}\nonumber
\begin{array}{rl}
A_{\sigma_1}(X) &= (X_1, X_2, X_3) \, ,\\
A_{\sigma_2}(X) &= (X_2, X_2, X_3) \, ,\\
A_{\sigma_3}(X) &= (X_3, X_3, X_3)\, .
\end{array} 
 \end{align}
 This representation uniquely splits as a sum of mutually nonisomorphic one-dimensional irreducible representations
$$  \{X_1=X_2=X_3\} \oplus \{X_2=X_3=0\} \oplus \{X_1=X_2, X_3=0\} \, .$$ 
The action of $\Sigma_3$ on the subrepresentation $\{X_1=X_2=X_3\}$ is trivial, so only a saddle-node bifurcation can generically occur along this irreducible representation.

On the subrepresentation $\{X_2=X_3=0\}$, both $A_{\sigma_2}$ and $A_{\sigma_3}$ act as the zero map. Thus one expects a transcritical bifurcation to occur along this irreducible representation.

On the subrepresentation $\{X_1=X_2, X_3=0\}$, the map $A_{\sigma_2}$ acts as identity, while $A_{\sigma_3}$ acts as the zero map. Hence, a transcritical bifurcation must occur generically along this irreducible representation as well.

\subsubsection*{Bifurcations for $\Sigma_4$}
We have graphically depicted $\Sigma_4$ in Figure \ref{pict6} below.
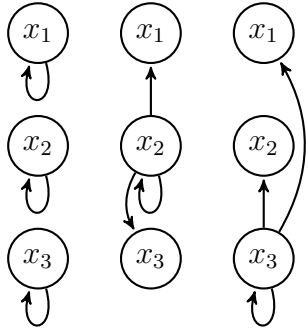
\begin{figure}[ht]\renewcommand{\figurename}{\rm \bf \footnotesize Figure}
\centering
\begin{tikzpicture}[->,>=stealth',shorten >=1pt,auto,node distance=1.5cm,
                    thick,main node/.style={circle,draw,font=\sffamily\large\bfseries}]

  \node[main node] (1) {$x_1$};
  \node[main node] (2) [below of=1] {$x_2$};
  \node[main node] (3) [below of=2] {$x_3$};

   \node[main node] (4) [right of=1] {$x_1$};
  \node[main node] (5) [below of=4] {$x_2$};
  \node[main node] (6) [below of=5] {$x_3$};
  \node[main node] (7) [right of =4] {$x_1$};
  \node[main node] (8) [below  of=7] {$x_2$};
  \node[main node] (9) [below of=8] {$x_3$};

  \path[every node/.style={font=\sffamily\small}]
(1) edge [loop below] node {} (1)
(2) edge [loop below] node {} (2)
(3) edge [loop below] node {} (3)

(5) edge node {} (4)
(5) edge [loop below] node {} (5)
(5) edge [bend right] node {} (6)

(9) edge node {} (8)
(9) edge [bend right] node {} (7)
(9) edge [loop below] node {} (9)
    ;

\end{tikzpicture}
\caption{\footnotesize {\rm The collection $\Sigma_4$ depicted as a directed multigraph.}}
\label{pict6}
\end{figure}

\noindent The representation of $\Sigma_4$ is given by 
  \begin{align}\nonumber
\begin{array}{rl}
A_{\sigma_1}(X) &= (X_1, X_2, X_3) \, ,\\
A_{\sigma_2}(X) &= (X_2, X_2, X_3) \, ,\\
A_{\sigma_3}(X) &= (X_3, X_2, X_3)\, .
\end{array} 
 \end{align}
 This representation nonuniquely splits as a sum of mutually nonisomorphic indecomposables
$$\{X_1=X_2=X_3\} \oplus \{(1+a) X_2 + (1-a) X_3=0\}\ \mbox{for} \ a\in \R \, .$$ 
The action of $\Sigma_4$ on the subrepresentation $\{X_1=X_2=X_3\}$ is trivial, so only a saddle-node bifurcation can generically occur along this irreducible representation.

Because the two-dimensional indecomposable representations $\{(1+a) X_2 + (1-a) X_3=0\}$ are all isomorphic by the Krull-Schmidt theorem, let us consider only $\{X_3=0\}$ and choose coordinates $(X_1, X_2)$. The action of $A_{\sigma_2}$ and $A_{\sigma_3}$ on this subrepresentation then reads
\begin{align}\nonumber
\begin{array}{rl}
A_{\sigma_2}(X_1, X_2) &= (X_2, X_2) \, ,\\
A_{\sigma_3}(X_1, X_2) &= (0, X_2)\, .
\end{array} 
 \end{align}
This confirms that $\{X_3=0\}$ is indecomposable, but not irreducible, because it contains the one-dimensional subrepresentation $\{X_2=X_3=0\}$. Moreover, one computes that 
$${\rm End}(\{X_3=0\})=\{ \alpha I \, |\, \alpha \in \R\}\, . $$
This shows that $\{X_3=0\}$ is absolutely indecomposable (i.e. of real type), and that there do not exist nontrivial nilpotent endomorphisms.

The bifurcation equation $r(X_1, X_2; \lambda) = (r_1(X_1, X_2; \lambda), r_2(X_1, X_2; \lambda)) = (0,0)$ is equivariant precisely when
\begin{align}\nonumber
&r_{1}(X_2, X_2; \lambda)= r_{2}(X_2, X_2; \lambda)=r_2(X_1, X_2; \lambda), \\ \nonumber 
&r_1(0,X_2;\lambda) = 0 \ \mbox{and}\  r_2(0, X_2;\lambda) = r_2(X_1, X_2;\lambda) \, .
\end{align}
These conditions imply that
\begin{align}\nonumber
r_1(X_1, X_2; \lambda)& =  a \lambda X_1 + bX_1X_2 + cX_1^2 \\ \nonumber&+ \mathcal{O}\left(|X_1|\cdot( |\lambda|^2 + |\lambda|\cdot|X_1| +  |\lambda|\cdot|X_2|+ |X_1|^2+ |X_2|^2)\right)\, , \\ \nonumber 
 r_2(X_1, X_2; \lambda)& =  a \lambda X_2 + (b+c) X_2^2  + \mathcal{O}(|\lambda|^2\cdot|X_2|+|\lambda|\cdot |X_2|^2+|X_2|^3) \, .
\end{align} 
Under the generic conditions that $a, b+c, c\neq 0$, this gives four solution branches:
\begin{align}\nonumber
\begin{array}{lll} X_1=0, & X_2=0, & X_3=0,\\
X_1=0, & X_2= -\frac{a}{b+c}\lambda + \mathcal{O}(\lambda^2), & X_3=0,\\
X_1= -\frac{a}{c}\lambda + \mathcal{O}(\lambda^2), & X_2= 0,& X_3= 0, \\ \nonumber
X_1=  -\frac{a}{b+c}\lambda + \mathcal{O}(\lambda^2), & X_2 =  -\frac{a}{b+c}\lambda + \mathcal{O}(\lambda^2), & X_3=0.
\end{array}
\end{align}
This means that in this bifurcation a fully synchronous branch and three partially synchronous transcritical branches come together. A diagram of this bifurcation is given in Figure \ref{figbif2}.

 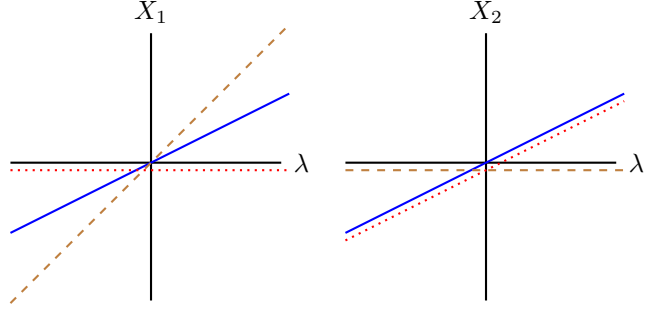
\begin{figure}[ht]\renewcommand{\figurename}{\rm \bf \footnotesize Figure}
\centering
\begin{tikzpicture} [>=stealth',shorten >=1pt,auto,node distance=3cm,
                    thick,main node/.style={draw,font=\sffamily\bfseries}]
 \node (a) at (-2,0) {};
 \node (x) at (0,2) {$X_1$};
 \node (b) at (2,0) {$\lambda$};
 \node (m) at (0,-2){};
 \node (c) at (-2,-1) {};
 \node (k) at (-2,-2) {};
 \node (g) at (-2,-0.1){} ;
 \node (d) at (2,1) {};
 \node (l) at (2,2) {};
 \node (h) at (2,-0.1) {};
 \node (e) at (2,2) {};
 \node (f) at (2,-2) {};
 \draw (x)--(m);
  \draw (a) --(b);
  \draw[blue] (c) --(d);
  \draw[dashed, brown] (k) --(l);
  \draw[dotted, red] (g) --(h);
\end{tikzpicture}
\begin{tikzpicture} [>=stealth',shorten >=1pt,auto,node distance=3cm,
                    thick,main node/.style={draw,font=\sffamily\bfseries}]
 \node (a) at (-2,0) {};
 \node (m) at (0,-2){};
 \node (x) at (0,2) {$X_2$};
 \node (b) at (2,0) {$\lambda$};
 \node (c) at (-2,-1) {};
 \node (k) at (-2,-1.1) {};
 \node (g) at (-2,-0.1){} ;
 \node (d) at (2,1) {};
 \node (l) at (2,0.9) {};
 \node (h) at (2,-0.1) {};
 \node (e) at (2,2) {};
 \node (f) at (2,-2) {};
 \draw (x)--(m);
  \draw (a) --(b);
  \draw[blue] (c) --(d);
  \draw[dotted, red] (k) --(l);
  \draw[dashed, brown] (g) --(h);
\end{tikzpicture}
\caption{\footnotesize {\rm  Bifurcation diagram of a co-dimension one steady state bifurcation in the fundamental network of $\Sigma_4$. This figure depicts the nontrivial solution branches in case $a<0$ and $b, c>0$.}}
\label{figbif2}
\end{figure}

\subsubsection*{Bifurcations for $\Sigma_5$}
We have graphically depicted $\Sigma_5$ in Figure \ref{pict7} below.
\begin{figure}[ht]\renewcommand{\figurename}{\rm \bf \footnotesize Figure}
\centering
\begin{tikzpicture}[->,>=stealth',shorten >=1pt,auto,node distance=1.5cm,
                    thick,main node/.style={circle,draw,font=\sffamily\large\bfseries}]

  \node[main node] (1) {$x_1$};
  \node[main node] (2) [below of=1] {$x_2$};
  \node[main node] (3) [below of=2] {$x_3$};

   \node[main node] (4) [right of=1] {$x_1$};
  \node[main node] (5) [below of=4] {$x_2$};
  \node[main node] (6) [below of=5] {$x_3$};
  \node[main node] (7) [right of =4] {$x_1$};
  \node[main node] (8) [below  of=7] {$x_2$};
  \node[main node] (9) [below of=8] {$x_3$};

  \path[every node/.style={font=\sffamily\small}]
(1) edge [loop below] node {} (1)
(2) edge [loop below] node {} (2)
(3) edge [loop below] node {} (3)

(5) edge node {} (4)
(5) edge [loop below] node {} (5)
(6) edge [loop below] node {} (6)

(8) edge [loop below] node {} (8)
(9) edge [bend right] node {} (7)
(9) edge [loop below] node {} (9)
    ;

\end{tikzpicture}
\caption{\footnotesize {\rm The collection $\Sigma_5$ depicted as a directed multigraph.}}
\label{pict7}
\end{figure}
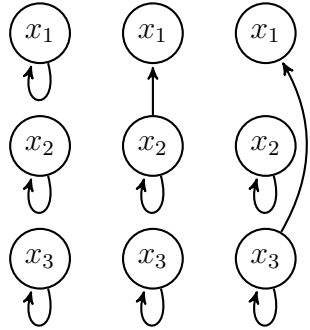

\noindent The representation of $\Sigma_5$ is given by 
  \begin{align}\nonumber
\begin{array}{rl}
A_{\sigma_1}(X) &= (X_1, X_2, X_3) \, ,\\
A_{\sigma_2}(X) &= (X_2, X_2, X_2) \, ,\\
A_{\sigma_3}(X) &= (X_3, X_3, X_3)\, .
\end{array} 
 \end{align}
 This representation nonuniquely splits as a sum of mutually nonisomorphic indecomposables
$$\{X_2=X_3=0\} \oplus \{X_1 + aX_2 =(1+a)X_3\} \ \mbox{for} \ a\in \R\, .$$ 
The maps $A_{\sigma_2}$ and $A_{\sigma_3}$ both act on the subrepresentation $\{X_2=X_3=0\}$ as the zero map, so only a transcritical bifurcation can generically occur along this irreducible representation.

Because the two-dimensional indecomposable subrepresentations $\{X_1 + aX_2 =(1+a)X_3\}$ are all isomorphic by the Krull-Schmidt theorem, let us consider only $\{X_1=X_2\}$ and choose coordinates $(X_1, X_3)$. The action of $A_{\sigma_2}$ and $A_{\sigma_3}$ on this subrepresentation reads
\begin{align}\nonumber
\begin{array}{rl}
A_{\sigma_2}(X_1, X_3) &= (X_1, X_1) \, ,\\
A_{\sigma_3}(X_1, X_3) &= (X_3, X_3)\, .
\end{array} 
 \end{align}
This confirms that $\{X_1=X_2\}$ is indecomposable, but not irreducible, because it contains the one-dimensional subrepresentation $\{X_1=X_2=X_3\}$. Moreover, one computes that 
$${\rm End}(\{X_1=X_2\})=\{ \alpha I \, |\, \alpha \in \R\}\, . $$
This shows that $\{X_1=X_2\}$ is absolutely indecomposable (i.e. of real type), and that there do not exist nontrivial nilpotent endomorphisms.

The bifurcation equation $r(X_1, X_3; \lambda) = (r_1(X_1, X_3; \lambda), r_3(X_1, X_3; \lambda)) = (0,0)$ is equivariant precisely when
\begin{align}\nonumber
&r_{1}(X_1, X_1; \lambda)= r_{3}(X_1, X_1; \lambda)=r_1(X_1, X_3; \lambda), \\ \nonumber 
&r_1(X_3, X_3;\lambda) =  r_3(X_3, X_3;\lambda) = r_3(X_1, X_3;\lambda) \, .
\end{align}
These conditions imply that
\begin{align}\nonumber
r_1(X_1, X_3; \lambda)& =  a \lambda + b X_1^2 + \mathcal{O}(|\lambda|\cdot |X_1|+ |X_1|^3)\, , \\ \nonumber 
 r_3(X_1, X_3; \lambda)& =  a \lambda + b X_3^2  + \mathcal{O}(|\lambda|\cdot |X_3|+|X_3|^3) \, .
\end{align} 
Under the generic conditions that $a, b \neq 0$, this gives two solution branches:
\begin{align}\nonumber
\begin{array}{l}
 X_1 = X_2= X_3= \pm \sqrt{-(a/b)\lambda} + \mathcal{O}(\lambda)\, ,\\
 X_1 = X_2= -X_3= \pm \sqrt{-(a/b)\lambda} + \mathcal{O}(\lambda)\, .
\end{array}
\end{align}
In this bifurcation a fully synchronous saddle-node branch and a partially synchronous saddle-node branche meet. A diagram of this bifurcation is given in Figure \ref{figbif3}.
 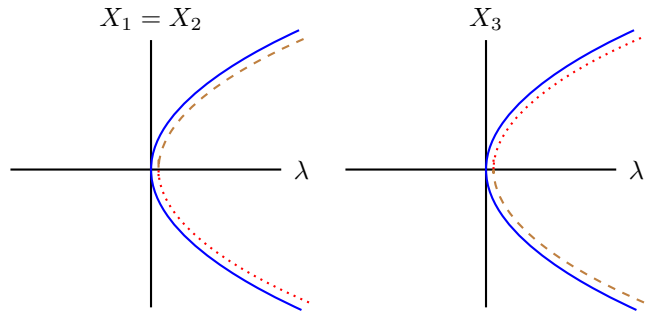
\begin{figure}[ht]\renewcommand{\figurename}{\rm \bf \footnotesize Figure}
\centering
\begin{tikzpicture} [>=stealth',shorten >=1pt,auto,node distance=3cm,
                    thick,main node/.style={draw,font=\sffamily\bfseries}]
 \node (a) at (-2,0) {};
  \node (y) at (0.1,0) {};
 \node (m) at (0,-2){};
 \node (x) at (0,2) {$X_1=X_2$};
 \node (b) at (2,0) {$\lambda$};
 \node (e) at (2,2) {};
 \node (e1) at (2.1,1.9) {};
\node (f1) at (2.1,-1.9) {};
 \node (f) at (2,-2) {};
 \node (k) at (2.1,2) {};
 \node (l) at (2.1,-2) {};
 \draw (x)--(m);
  \draw (a) --(b);
  \draw[rotate=270, blue] (f.north) parabola bend (0,0) (e.south);
  \draw[rotate=90,dashed, brown] (y.north) parabola bend (0,-0.1) (e1.south);
   \draw[rotate=90,dotted, red] (f1.north) parabola bend (0,-0.1) (y.south);\end{tikzpicture}
\begin{tikzpicture} [>=stealth',shorten >=1pt,auto,node distance=3cm,
                    thick,main node/.style={draw,font=\sffamily\bfseries}]
 \node (a) at (-2,0) {};
 \node (y) at (0.1,0) {};
  \node (m) at (0,-2){};
 \node (x) at (0,2) {$X_3$};
\node (b) at (2,0) {$\lambda$};
\node (e) at (2,2) {};
\node (e1) at (2.1,1.9) {};
\node (f) at (2,-2) {};
\node (f1) at (2.1,-1.9) {};
\node (k) at (-2,2) {};
 \node (l) at (-2,-2) {};
  \draw (x)--(m);
  \draw (a) --(b);
  \draw[rotate=270, blue] (f.north) parabola bend (0,0) (e.south);
  \draw[rotate=90,dotted, red] (y.north) parabola bend (0,-0.1) (e1.south);
   \draw[rotate=90,dashed, brown] (f1.north) parabola bend (0,-0.1) (y.south);
\end{tikzpicture}
\caption{\footnotesize {\rm Bifurcation diagram of a co-dimension one steady state bifurcation in the fundamental network of $\Sigma_5$. This figure depicts the solution branches in case $a>0$ and $b<0$.}}
\label{figbif3}
\end{figure}

\subsubsection*{Bifurcations for $\Sigma_6$}
We have graphically depicted $\Sigma_6$ in Figure \ref{pict8} below.
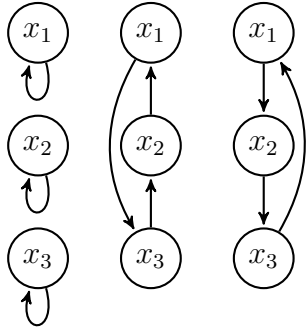
\begin{figure}[ht]\renewcommand{\figurename}{\rm \bf \footnotesize Figure}
\centering
\begin{tikzpicture}[->,>=stealth',shorten >=1pt,auto,node distance=1.5cm,
                    thick,main node/.style={circle,draw,font=\sffamily\large\bfseries}]

  \node[main node] (1) {$x_1$};
  \node[main node] (2) [below of=1] {$x_2$};
  \node[main node] (3) [below of=2] {$x_3$};

   \node[main node] (4) [right of=1] {$x_1$};
  \node[main node] (5) [below of=4] {$x_2$};
  \node[main node] (6) [below of=5] {$x_3$};
  \node[main node] (7) [right of =4] {$x_1$};
  \node[main node] (8) [below  of=7] {$x_2$};
  \node[main node] (9) [below of=8] {$x_3$};

  \path[every node/.style={font=\sffamily\small}]
(1) edge [loop below] node {} (1)
(2) edge [loop below] node {} (2)
(3) edge [loop below] node {} (3)

(5) edge node {} (4)
(6) edge node {} (5)
(4) edge [bend right] node {} (6)

(7) edge node {} (8)
(9) edge [bend right] node {} (7)
(8) edge node {} (9)
    ;

\end{tikzpicture}
\caption{\footnotesize {\rm The collection $\Sigma_6$ depicted as a directed multigraph.}}
\label{pict8}
\end{figure}
The semigroup $\Sigma_6$ is the group $\Z_3$. Only for completeness, we shall now recall some well-known facts from the bifurcation theory of $\Z_3$-equivariant differential equations.

 The representation of $\Sigma_6$ is given by 
  \begin{align}\nonumber
\begin{array}{rl}
A_{\sigma_1}(X) &= (X_1, X_2, X_3) \, ,\\
A_{\sigma_2}(X) &= (X_2, X_3, X_1) \, ,\\
A_{\sigma_3}(X) &= (X_3, X_1, X_2)\, .
\end{array} 
 \end{align}
 This representation uniquely splits as a sum of mutually nonisomorphic irreducibles
$$ \{X_1=X_2=X_3\}\oplus \{ X_1+X_2+X_3=0\} \, .$$ 
The action of $\Sigma_6$ on the subrepresentation $\{X_1=X_2=X_3\}$ is trivial, so only a saddle-node bifurcation can generically occur along this irreducible representation.

On the subrepresentation $\{X_1+X_2+X_3=0\}$, let us choose coordinates 
$$Y_1:=\sqrt{3}(X_1+X_2)\, , \, Y_2:=X_1-X_2 \, .$$ 
Then the action of $A_{\sigma_2}$ and $A_{\sigma_3}$ on this subrepresentation is given by the rotations
\begin{align}\nonumber
\begin{array}{rl}
A_{\sigma_2}(Y_1, Y_2) &= ( \cos(2\pi/3)Y_1 - \sin(2\pi/3)Y_2, \sin(2\pi/3)Y_1 + \cos(2\pi/3)Y_2) \, ,\\
A_{\sigma_3}(Y_1, Y_2) &=  ( \cos(4\pi/3)Y_1 - \sin(4\pi/3)Y_2,  \sin(4\pi/3)Y_1 + \cos(4\pi/3)Y_2) \, .
\end{array} 
 \end{align}
This confirms that $\{X_1+X_2+X_3\}$ is irreducible (over the real numbers). Moreover, one computes that 
$${\rm End}(\{X_1+X_2+X_3=0\})=\left\{ \left(\begin{array}{c} Y_1 \\ Y_2 \end{array} \right) \mapsto \left(\begin{array}{cc} \alpha & \beta \\ -\beta & \alpha \end{array} \right) \left(\begin{array}{c} Y_1 \\ Y_2 \end{array} \right)   \, |\, \alpha, \beta \in \R \right\}\, . $$
This shows that $\{X_1+X_2+X_3=0\}$ is nonabsolutely irreducible (in fact of complex type), and that there do not exist nontrivial nilpotent endomorphisms.

Generically, co-dimension one steady state bifurcations do not take place along an irreducible representation of complex type, so our bifurcation analysis of $\Sigma_6$ ends here.

\subsubsection*{Bifurcations for $\Sigma_7$}
We have graphically depicted $\Sigma_7$ in Figure \ref{pict9} below.
\begin{figure}[ht]\renewcommand{\figurename}{\rm \bf \footnotesize Figure}
\centering
\begin{tikzpicture}[->,>=stealth',shorten >=1pt,auto,node distance=1.5cm,
                    thick,main node/.style={circle,draw,font=\sffamily\large\bfseries}]

  \node[main node] (1) {$x_1$};
  \node[main node] (2) [below of=1] {$x_2$};
  \node[main node] (3) [below of=2] {$x_3$};

   \node[main node] (4) [right of=1] {$x_1$};
  \node[main node] (5) [below of=4] {$x_2$};
  \node[main node] (6) [below of=5] {$x_3$};
  \node[main node] (7) [right of =4] {$x_1$};
  \node[main node] (8) [below  of=7] {$x_2$};
  \node[main node] (9) [below of=8] {$x_3$};

  \path[every node/.style={font=\sffamily\small}]
(1) edge [loop below] node {} (1)
(2) edge [loop below] node {} (2)
(3) edge [loop below] node {} (3)

(5) edge [bend left] node {} (4)
(4) edge [bend left] node {} (5)
(6) edge [loop below] node {} (6)

(9) edge node {} (8)
(9) edge [bend right] node {} (7)
(9) edge [loop below] node {} (9)
    ;

\end{tikzpicture}
\caption{\footnotesize {\rm The collection $\Sigma_7$ depicted as a directed multigraph.}}
\label{pict9}
\end{figure}
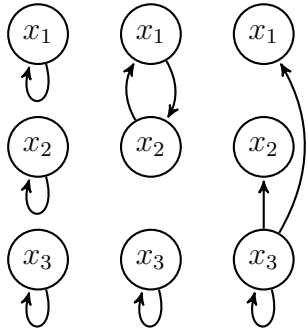

\noindent The representation of $\Sigma_7$ is given by 
  \begin{align}\nonumber
\begin{array}{rl}
A_{\sigma_1}(X) &= (X_1, X_2, X_3) \, ,\\
A_{\sigma_2}(X) &= (X_2, X_1, X_3) \, ,\\
A_{\sigma_3}(X) &= (X_3, X_3, X_3)\, .
\end{array} 
 \end{align}
 This representation uniquely splits as a sum of mutually nonisomorphic one-dimensional irreducible representations
$$\{X_1=X_2=X_3\}\oplus \{X_1=X_2, X_3=0\}\oplus \{X_1+X_2=0, X_3=0\}\, .$$ 
The action of $\Sigma_7$ on the subrepresentation $\{X_1=X_2=X_3\}$ is trivial, so only a saddle-node bifurcation can generically occur along this irreducible representation.

On the subrepresentation $\{X_1=X_2, X_3=0\}$, the map $A_{\sigma_2}$ acts as identity, while $A_{\sigma_3}$ act as the zero map. Thus one expects a transcritical bifurcation to occur along this irreducible representation.

On the subrepresentation $\{X_1+X_2=0, X_3=0\}$, the map $A_{\sigma_2}$ acts as minus identity, while $A_{\sigma_3}$ acts as the zero map. This means that a generic steady state bifurcation along this irreducible representation must be a pitchfork bifurcation.

\subsubsection*{Our running example revisited}
For our running Example \ref{running}, the composition table of $\Sigma=\{\sigma_1, \sigma_2, \sigma_3\}$ was found in Example \ref{comptable}. It turns out that this composition table is identical to that of $\Sigma_3$. As a consequence, the fundamental networks for $\Sigma$ and $\Sigma_3$ must be the same. Indeed, we computed the fundamental network of our example in Example \ref{fundamentalexample} and it coindices with that of $\Sigma_3$.

Let us assume now that the response function $f:V^3\times (\lambda_0, \lambda_1)\to V$ depends on a parameter. 
Then the differential equations of our running example become
\begin{align}\label{1}
\begin{array}{ll} \dot x_1 = & f(x_1, x_1, x_1; \lambda) \\  \dot x_2 = & f(x_2, x_2, x_1; \lambda) \\ \dot x_3 = & f(x_3, x_1, x_1; \lambda)\end{array} \, .
\end{align}
The corresponding fundamental network reads
\begin{align}\label{2}
\begin{array}{ll} \dot X_1 = & f(X_1, X_2, X_3; \lambda) \\  \dot X_2 = & f(X_2, X_2, X_3; \lambda) \\ \dot X_3 = & f(X_3, X_3, X_3; \lambda)\end{array} \, .
\end{align}
Recall that when $V=\R$, then our analysis of the fundamental network of $\Sigma_3$ predicts three possible generic co-dimension one steady state bifurcations:
\begin{itemize}
\item[i)]  A fully synchronous saddle-node bifurcation inside $\{X_1=X_2=X_3\}$. 
\item[ii)] A partially synchronous transcritical bifurcation inside $\{X_2=X_3\}$.
\item[iii)] A partially synchronous transcritical bifurcation inside $\{X_1=X_2\}$.
\end{itemize}
To understand how these scenarios impact the original network (\ref{1}), let us recall from Example \ref{conjugateexample} and Remark \ref{equilibriaremark}  that $(x_1, x_2, x_3)$ is an equilibrium point of (\ref{1}) if and only if it is mapped to an equilibrium point of (\ref{2}) by all the maps $\pi_1, \pi_2, \pi_3: V^3\to V^3$ given by 
$$\begin{array}{l} \pi_1(x_1, x_2, x_3) = (x_1, x_1, x_1)\, , \\ \pi_2(x_1, x_2, x_3) = (x_2, x_2, x_1)\, ,\\ \pi_3(x_1, x_2, x_3) = (x_3, x_1, x_1)\, . \end{array}$$
As a consequence, we find the following:

\begin{itemize}
\item[i)]  Assume that the fundamental network undergoes a fully synchronous saddle-node bifurcation. Then all its local equilibria lie inside the diagonal $\{X_1=X_2=X_3\}$. Now one can remark that $\pi_1$ always sends the point $(x_1, x_2, x_3)$ to the diagonal, but $\pi_2$ does so only if $x_1=x_2$ and $\pi_3$ only if $x_1=x_3$. Thus, the point $(x_1,x_2, x_3)$ can only be an equilibrium if $x_1=x_2=x_3$. In other words, if the fundamental network undergoes a fully synchronous saddle-node bifurcation, then so does the original network. 

\item[ii)] 
It is clear that $\pi_1$ and $\pi_3$ always map $(x_1, x_2, x_3)$ inside $\{X_2=X_3\}$ but $\pi_2$ only does so if $x_1=x_2$. Thus, if the fundamental network undergoes a partially synchronous transcritical bifurcation inside $\{X_2=X_3\}$, then the original network undergoes a partially synchronous transcritical bifurcation inside $\{x_1=x_2\}$. 

\item[iii)] 
Similarly, $\pi_1$ and $\pi_2$ always map $(x_1, x_2, x_3)$ inside $\{X_1=X_2\}$ but $\pi_3$ only does so if $x_1=x_3$. Thus, if the fundamental network undergoes a partially synchronous transcritical bifurcation inside $\{X_1=X_2\}$, then the original network undergoes a partially synchronous transcritical bifurcation inside $\{x_1=x_3\}$. 

\end{itemize}
Our message is that the monoid structure of $\Sigma$ both explains and predicts these bifurcation scenarios. Nevertheless, let us for completeness also show how they can be found from direct calculations of the steady states of (\ref{1}): 
\begin{itemize}
\item[i)] Assume that $f(0,0,0; 0)=0$ and let us Taylor expand 
$$f(X, X, X) = a \lambda + bX + c X^2 + \mathcal{O}(|\lambda|^2 +|\lambda| \cdot |X| + |X|^3)\, .$$ When $b=0$ and $a, c \neq 0$, we find as solutions of (\ref{1})
\begin{align}\nonumber
x_1=x_2=x_3 =\pm \sqrt{-(a/c)\lambda} + \mathcal{O}(|\lambda|)\, .
\end{align}

\item[ii)] Assume that $f(0,0,0;\lambda)=0$ and let us Taylor expand 
\begin{align}\nonumber
f(X_1, X_2, X_3; \lambda) & = (a + b\lambda) X_1 +c X_2 + d X_3 + e X_1^2 \\ \nonumber & + \mathcal{O}(|\lambda|^2\cdot |X_1| + |\lambda|\cdot |X_2|+ |\lambda|\cdot |X_3| + |X_2|^2+|X_3|^2+|X_1|^3) \, .
\end{align}
When $a=0$ and $b, c, c+d, e \neq 0$, we find as solutions of (\ref{1})
\begin{align}\nonumber
x_1=x_2=x_3=0 \ \mbox{and}\ x_1=x_2=0, x_3 = - (b/e)\lambda + \mathcal{O}(|\lambda|^2)\, .
\end{align}
\item[iii)] Assume that $f(0,0,0;\lambda)=0$ and let us Taylor expand 
\begin{align}\nonumber
f(X, X, Y; \lambda) = (a + b\lambda) X + cY + d X^2 + \mathcal{O}(|\lambda|^2\cdot |X| + |\lambda|\cdot |Y|+ |Y|^2+|X|^3) \, .
\end{align}
When $a=0$ and $a+c, b, d \neq 0$, we find as solutions of (\ref{1})
\begin{align}\nonumber
x_1=x_2=x_3=0 \ \mbox{and}\ x_1 = x_3 = 0, x_2= - (b/d)\lambda + \mathcal{O}(|\lambda|^2)\, .
\end{align}
\end{itemize}

\bibliography{CoupledNetworks}
\bibliographystyle{amsplain}

  \end{document}